\begin{document}

\title{Sparse recovery with unknown variance:\\ a LASSO-type approach}
%[Sparse recovery with unknown variance]

\author{St\'ephane Chr\'etien and S\'ebastien Darses}

\author{St\'ephane~Chr\'etien,
        S\'ebastien~Darses % <-this % stops a space
\IEEEcompsocitemizethanks{\IEEEcompsocthanksitem S. Chr\'etien is with the Laboratoire de Math\'ematiques, UMR 6623,
Universit\'e de Franche-Comt\'e, 16 route de Gray,
25030 Besancon, France \protect\\
% note need leading \protect in front of \\ to get a newline within \thanks as
% \\ is fragile and will error, could use \hfil\break instead.
E-mail: stephane.chretien@univ-fcomte.fr
\IEEEcompsocthanksitem S. Darses is with the LATP, UMR 6632, Universit\'e Aix-Marseille, Technop\^ole Ch\^{a}teau-Gombert,
39 rue Joliot Curie, 13453 Marseille Cedex 13, France\protect\\
E-mail: darses@cmi.univ-mrs.fr}% <-this % stops a space
\thanks{Manuscript received ?, 2011; revised ?, 2012.}}

\IEEEcompsoctitleabstractindextext{
\begin{abstract}
We address the issue of estimating the regression vector $\beta$ in the generic $s$-sparse linear model $y = X\beta+z$, 
with $\beta\in\R^{p}$, $y\in\R^{n}$, $z\sim\mathcal  N(0,\sg^2 I)$ and $p> n$ when the variance $\sg^{2}$ is unknown. 
We study two LASSO-type methods that jointly estimate $\beta$ and the variance. These estimators are minimizers 
of the $\ell_1$ penalized least-squares functional, where the relaxation parameter is tuned according 
to two different strategies. In the first strategy, the relaxation parameter is of the order 
$\ch{\sigma} \sqrt{\log p}$, where $\ch{\sigma}^2$ is the empirical variance. 
%The resulting optimization problem can be solved by running only a few successive LASSO instances with 
%recursive updating of the relaxation parameter.
In the second strategy, the relaxation parameter is 
chosen so as to enforce a trade-off between the fidelity and the penalty terms at optimality. For both estimators, our assumptions are similar to the ones proposed by Cand\`es and Plan in {\it Ann. Stat. (2009)}, for the case where $\sg^{2}$ is known. We prove that our estimators ensure exact recovery of the support and sign pattern of $\beta$ with high probability.
We present simulations results showing that the first estimator enjoys nearly the same performances in practice as the standard LASSO (known variance case) for a wide range of the signal to noise ratio. 
Our second estimator is shown to outperform both in terms of false detection, 
when the signal to noise ratio is low. 
\end{abstract}

\begin{IEEEkeywords}
LASSO, sparse regression, $\ell_1$ penalization, high dimensional regression, unknown variance.
\end{IEEEkeywords}}

\maketitle

%\tableofcontents

\IEEEdisplaynotcompsoctitleabstractindextext
% \IEEEdisplaynotcompsoctitleabstractindextext has no effect when using
% compsoc under a non-conference mode.

% For peer review papers, you can put extra information on the cover
% page as needed:
% \ifCLASSOPTIONpeerreview
% \begin{center} \bfseries EDICS Category: 3-BBND \end{center}
% \fi
%
% For peerreview papers, this IEEEtran command inserts a page break and
% creates the second title. It will be ignored for other modes.
\IEEEpeerreviewmaketitle

\section{Introduction}

\subsection{Problem statement}
The well-known standard Gaussian linear model reads
\bea
y & = & X\beta+z,
\eea
where $X$ denotes a $n\times p$ design matrix, $\beta\in\R^{p}$ is an unknown parameter and the components of the error $z$ are assumed i.i.d. with normal distribution $\mathcal N(0,\sigma^2)$. The present paper aims at studying this model in the case where the number of covariates is
greater than the number of observations, $n< p$, and the regression vector $\beta$ and the variance $\sigma^2$ are both unknown.

The estimation of the parameters in this case is of course impossible without further assumptions on the regression 
vector $\beta$. One such
assumption is sparsity, i.e. only a few components of $\beta$ are different from zero, say $s$ components; $\beta$ 
is then said to be $s$-sparse. There has
been a great interest in the study of this problem recently. Recovering the support of $\beta$ has been extensively studied in the context of Compressed Sensing, a new paradigm for designing observation
matrices $X$. In this framework, it is now a standard fact that matrices $X$ can be
found (e.g. with high probability if drawn from sub-Gaussian i.i.d distributions) such that the number
of observations needed to recover $\beta$ exactly is proportional to $s \log\left(p/n \right)$. 

\subsection{Existing results in the known variance case}
When the variance is known and positive, two popular techniques to estimate the regression vector $\beta$ are 
the Least Absolute Shrinkage and Selection Operator (LASSO) \cite{Tibshirani:JRSSB96}, and the Dantzig selector \cite{CandesTao:AnnStat07}. 
We refer to \cite{Bickel:AnnStat09} for a recent simultaneous analysis of these two methods. 
The standard LASSO estimator $\ch{\beta}_\lambda$ of $\beta$ is defined as
\bea
\label{lass}
\ch{\beta}_{\lb} & \in & \down{b\in \mathbb R^p}{\rm argmin} \ \frac{1}{2} \|y-X b\|_2^2+ \lambda \|b\|_1,
\eea
where $\lambda>0$ is a regularization parameter controlling the sparsity of the estimated coefficients.

Sparse recovery cannot hold without some geometric assumptions on the dictionary (or 
the design matrix), as recalled in \cite{Koltchinskii:AnnStat09} pp. 4--5. The papers 
\cite{Wainwright:IEEEIT09} and \cite{ZhaoYu:JMLR06} introduced very pertinent assumptions 
for the study of variable selection problem using the LASSO in the finite sample (resp. asymptotic) contexts. 

One common assumption for the precise study of the statistical performance of these estimators is an incoherence 
property of the matrix $X$. This means that the coherence of $X$, i.e. the maximum scalar product of two (normalized) columns of $X$, is very 
small. Coherence based conditions appeared first in the context of Basis Pursuit for sparse approximation in \cite{Donoho:IEEEIT01}, \cite{Elad:IEEEIT02} 
and \cite{Donoho:PNAS03}. 
It then had a significant impact on Compressed Sensing; see \cite{Rudelson:IMRN05} and \cite{Candes:InvProb07}. 

The recent references  
\cite{Bickel:AnnStat09}, \cite{Bunea:EJStat07} and \cite{Kerkyacharian:Conf09} contain 
interesting assumptions on the coherence in our context of interest, i.e. high dimensional sparse regression. 
For instance, \cite{Bickel:AnnStat09} and \cite{Bunea:EJStat07} require a 
bound of the order $\sqrt{\log n/n}$ whereas \cite{Kerkyacharian:Conf09} requires a bound of the order $1/s$. 
The recent paper \cite{CandesPlan:AnnStat09} requires that the coherence of $X$ is less than ${\rm Cst}/\log p$. 
Under the additional assumptions that $\beta$ is sparse and assuming that the support and sign pattern are 
uniformly distributed, they prove that $\ch{\beta}$ has the same support and sign pattern as $\beta$ with probability 
$1-p^{-1} ((2\pi \log p)^{-1/2}+sp^{-1})-O(p^{-2\log 2})$.   

Notice, as commented on in e.g. \cite{CandesPlan:AnnStat09}, 
that various assumptions in the literature, such as the invertibility of the restricted 
covariance matrix \cite{Wainwright:IEEEIT09} indexed by the signal's true support 
and the Irrepresentable Condition in \cite{ZhaoYu:JMLR06} can be derived from their 
incoherence condition, although with possibly suboptimal orders in certain instances.

\subsection{Existing results in the unknown variance case}
The problem of estimating the variance in the sparse regression model has been addressed in only a few references 
until now. In \cite{Baraud:AnnStat09} the authors analyze in the unknown variance setting AIC, BIC and AMDL based estimators, as well as estimators using a more general complexity penalty. As well known among practitioners, the LASSO procedure, at the price of certain assumptions on $X$, 
avoids the enumeration of all subsets of covariates, an intractable task when the number of covariates is large. This last property motivates the theoretical analysis provided in the present paper. 

In a recent work \cite{bsv}, a joint 
estimation procedure for both the regression vector and the variance is proposed. The authors give a detailed study of the 
risk under quite general conditions. In \cite{Sun:Test10}, 
it is proven in particular that, for the variance estimator of \cite{bsv}, under a compatibility condition 
introduced in \cite{vdGeer:EJS09}, $\lb \|\beta\|_1/\sg =o(1)$ if and only if $\ch \sg/  \sg = (1+o_{\bP} (1))$, for $\lambda$ such that 
$\bP (\lambda >a \|X^t (Y -X \beta )/n\|_\infty/\sg)\rightarrow 1$ where $a>1$ is any constant. The problem of support and sign pattern 
recovery as well as the one of providing non-asymptotic results with explicit constants are not addressed. 

\subsection{Our contribution}
\label{contrib}

We study two different strategies in the present paper.

\subsubsection{Strategy (A): Plugging in the variance estimator}
Our work mainly aims at understanding when the results of \cite{CandesPlan:AnnStat09} extend to the case where 
$\sg^2$ is unknown. In the case where $\sigma^2$ is known, it is proven in \cite{CandesPlan:AnnStat09} that the right order of magnitude for $\lambda$ is $\sigma \sqrt{\log p}$.
We first study the very natural estimator consisting of replacing $\sigma$ by $\ch{\sigma}=\|y-X\ch{\beta}\|_2/\sqrt{n}$ 
in the expression of $\lambda$. As is standard in the study of the LASSO, the regression vector $\beta$'s coefficients have 
to be significantly larger than the noise level for exact recovery of the support and sign pattern.\\

The main differences between the known and the unknown variance cases are summarized in the following table.

$$
\begin{array}{c|c}
\text{Known variance} & \text{Unknown variance: Strategy (A)}\\
\hline
\\
\ch{\beta} \in \down{b\in \R^p}{\rm argmin} \ \frac{ \|y-X b\|_2^2}{2} + \lambda \|b\|_1 & 
\ch{\beta}_\lb \in \down{b\in \R^p}{\rm argmin} \ \ 
\frac{\|y-X b\|_2^2}{2} +\lb  \|b\|_1 \\
\\
\hline
\\
& {\rm  \textrm{ Tune } \lb \textrm{ to } \ch{\lb} \textrm{ s.t. }:\ } \ch{\lb}= C_{\rm var} \ch{\sg} \sqrt{\log p} \\
\lb={\rm cst}\ \sg \sqrt{\log p} & \\
& \quad {\rm with:\ }\ch{\sg}^2 = \frac{\|y-X \ch{\beta}_{\ch{\lb}}\|_2^2}{n} \\
\\
\hline
{\rm Convex\ problem} & {\rm Non\ convex \ problem}\\
\hline
\\
{\rm Oracle\ } \wdt\beta  & {\rm Oracle\ } (\wdt\beta, \wdt \lb) \\
\\
\hline
{\rm Conditions\ holding \ with } & \\
{\rm high\ probability } &  {\rm Similar \ conditions} \\
\end{array}
$$

\bigskip

Notice that, in this table, $\ch{\beta}$ is defined via $\ch{\lb}$ and $\ch{\lb}$ is defined 
via $\ch{\beta}$. In other words, $\ch{\beta}$ and $\ch{\lb}$ jointly satisfy a set of optimality 
conditions. From a numerical viewpoint, $\ch{\beta}$ and $\ch{\lb}$ can be computed iteratively 
using a fixed point-type algorithm; see Section \ref{fixedpt}.  

\subsubsection{Strategy (B): Enforcing a trade-off between fidelity and penalty}
\label{snr}
Another possible strategy can be used to overcome the problem of estimating the regression vector $\beta$ and 
the relaxation parameter $\lb$ when the variance $\sigma^2$ is unknown. This  
strategy consists of prescribing a 
trade-off between the fidelity term and the penalty term. 
More precisely, we will impose the constraint $\ch{\lambda} \|\ch{\beta}\|_1/\|y-X \ch{\beta}\|_2^2=C$. Enforcing 
such a trade-off between fidelity and penalty results in a more complex problem from both the statistical 
and the computationaly viewpoints. However, since $\ch{\lambda} \|\ch{\beta}\|_1$ and $\|y-X \ch{\beta}\|_2^2$ 
are, at least approximately, homogeneous functions of $\sigma^2$, using such a criterion allows to bypass the estimation
of the variance in a first stage. The variance itself could be estimated in a second stage, using the formula 
$\ch{\sg}^2 = \frac{\|y-X \ch{\beta}_{\ch{\lb}}\|_2^2}{n}$.

$$
\begin{array}{c|c}
\text{Known variance} & \text{Unknown variance: Strategy (B)}\\
\hline
\\
\ch{\beta} \in \down{b\in \R^p}{\rm argmin} \ \frac{ \|y-X b\|_2^2}{2} + \lambda \|b\|_1 & 
\ch{\beta}_\lb\in \down{b\in \R^p}{\rm argmin} \ \ 
\frac{\|y-X b\|_2^2}{2} + \lb \ \|b\|_1 \\
\\
\hline
\\
& {\rm  \textrm{ Tune } \lb \textrm{ to } \ch{\lb} \textrm{ s.t. }:\ }  \\
\lb={\rm cst}\ \sg \sqrt{\log p} & \\
& \ch{\lb} \|\ch{\beta}_{\ch{\lb}}\|_1 = C \ \|y-X \ch{\beta}_{\ch{\lb}}\|_2^2 \\
\\
\hline
{\rm Convex\ problem} & {\rm Non\ convex \ problem}\\
\hline
\\
{\rm Oracle\ } \wdt\beta  & {\rm Oracle\ } (\wdt\beta, \wdt \lb) \\
\\
\hline
\\
{\rm Conditions\ holding \ with } & {\rm Similar \ conditions } \\ 
{\rm high \ probability } &   {\rm + \ Upper \ bound \ on \ } \|\beta\|_1 \\
\end{array}
$$

\bigskip

\subsubsection{Results}

Our main results are Theorem \ref{main2}, for Strategy (A), and Theorem \ref{main3}, for Strategy (B).
Both results can be described as follows. Given an arbitrary $\alpha>0$, 
we prove that, for regression vectors $\beta$ satisfying certain natural constraints, 
standard assumptions on the number of observations $n$ and the 
sparsity $s$ imply that our modified LASSO procedures fail to identify the support and the signs of 
$\beta$ with probability at most of the order $p^{-\alpha}$. These results are non-asymptotic and all our constants are explicit. 

The coherence assumption on the design matrix made in this paper is readily checkable. Many other currently used 
assumptions in the literature are based on concentration properties of the extreme singular values of 
all or most extracted submatrices of $X$ with bounded number of columns. Yet, some other are based on 
the concentration of the singular values of the covariance matrix with respect to the covariate's underlying 
distribution. All such criteria are difficult or impossible to check in practice as opposed to the coherence property.

We neither make any uncheckable assumption on the variance $\sg^2$. 
The only unverifiable assumptions used in the present work are on the magnitude of the nonzero regression coefficients.  
As in \cite{CandesPlan:AnnStat09}, the set of regressors $\beta$ which are correctly estimated is constrained 
by imposing that the magnitude of all nonzero components of $\beta$ should be greater than the noise level. 
Moreover, for Strategy (B), our analysis requires the additional assumption that the components 
of $\beta$ should not be too large either, the upper bound being in particular a function of $C$. This result 
suggests that Strategy (B) is pertinent in low SNR situations only. Simulation experiments at the end
of this paper confirm the usefulness of Strategy (B) in the low SNR setting.

\subsection{Plan of the paper}

The LASSO estimator, the main results Theorem \ref{main2} and Theorem \ref{main3}, together with the assumptions used throughout the paper are presented 
in Section \ref{lasso}. The proof of Theorem \ref{main2} is given in Section \ref{Proofmain2} and the proof of 
Theorem \ref{main3} in Section \ref{Proofmain3}. The proofs of certain technical intermediate results are
gathered in the Appendix. 

\subsection{Notations} 
\subsubsection{Generalities}
When $E\subset \left\{1,\ldots,N\right\}$, we denote by $|E|$ the cardinal of $E$.
For $I\subset \left\{1,\ldots,p\right\}$ and $x$ a vector in $\R^{p}$, we set $x_{I}=(x_{i})_{i\in I}\in\R^{|I|}$.
The usual scalar product is denoted by $\la\cdot,\cdot\ra$. The notations for the norms on vectors and matrices 
are also standard: for any vector
$x=(x_{i})\in\R^{N}$,
\bean
\|x\|^{2}_{2} = \sum_{1\le i \le N}x_{i}^{2}\ ; \quad
\|x\|_{1} = \sum_{1\le i \le N}|x_{i}|\ ; \quad
\|x\|_{\infty} = \sup_{1\le i \le N}|x_{i}|.
\eean

For any matrix $A\in \R^{d_1\times d_2}$, we denote by $A^{t}$ its transpose. The set of symmetric real matrices in $\R^{n\times n}$ is denoted by 
$\mathbb S_n$. We denote by $\|A\|$ the operator norm of $A$. 
The maximum (resp. minimum) singular value of $A$ is denoted by $\sigma_{\max}(A)$ (resp. $\sigma_{\min}(A)$). 
Recall that $\sigma_{\max}(A)=\|A\|$ and, if $A$ is invertible, $\sigma_{\min}(A)^{-1}=\|A^{-1}\|$.
We use the Loewner ordering on symmetric real matrices: if $A\in \mathbb S_n$, 
$0\preceq A$ is equivalent to saying that $A$ is positive semi-definite, and $A\preceq B$ stands for $0\preceq B-A$.

The notations $\mathcal  N(\mu,\sg^{2})$ (resp. $\chi^2(\nu)$ and $\mathcal  B(\nu)$) stands for the normal distribution 
on the real line with mean $\mu$ and variance $\sg^{2}$ (resp. the Chi-square distribution with $\nu$ degrees of freedom and the Bernoulli distribution with parameter $\nu$).

\subsubsection{Specific notations related to the design matrix $X$ and the estimators}
For $I\subset \left\{1,\ldots,p\right\}$, and a matrix $X$,
we denote by $X_{I}$ the submatrix whose columns are indexed by $I$. We denote
the range of $X_I$ by $V_I$ and the orthogonal projection onto $V_I$ by $\gP_{V_I}$.

The coherence $\mu(X)$ of a matrix $X$ whose columns are unit-norm is defined by
\bea
\mu(X) & = & \max_{1\le i\neq j \le p} |\la X_{i},X_{j} \ra|.
\eea
As in \cite{Tropp:CRAS08}, we consider the 'hollow-gram' matrix $H$ and the selector matrix $R={\rm diag}(\delta)$:
\bea
H &=& X^tX-\Id \label{holo}\\
R &=& {\rm diag}(\delta), \label{selector}
\eea
where $\delta$ is a vector of length $p$ whose components are i.i.d. random variables following the Bernoulli distribution $ \mathcal B(s/p)$. In a similar fashion, we define $R_s={\rm diag}(\delta^{(s)})$
where $\delta^{(s)}$ is a random vector of length $p$, uniformly distributed on the set of all vectors with exactly $s$ 
components equal to 1 and $p-s$ components equal to 0.

The support of $\ch{\beta}$ is always denoted by $\ch{T}$. 

%\bigskip

\section{The modified LASSO estimators}
\label{lasso}

In this section, we present the main results on the estimators given by Strategy (A) and Strategy (B),
and we discuss the underlying assumptions. Practical computability of these estimators will 
be studied in Section \ref{Simuls}. In particular "tuning $\lb$ to $\ch{\lb}$" is achieved by finding 
a zero of a function of $\lb$ numerically. We will show in Section \ref{Simuls} that these 
zero finding problems are computationally very easy to solve. 

For any arbitrary value of $\alpha>0$, Theorem \ref{main2} (resp. Theorem \ref{main3}), 
proposes a set of conditions under which exact recovery of the support and sign pattern of $\beta$ 
holds with probability at least $1-O(p^{-\alpha})$ for Strategy (A) (resp. for Strategy (B)). 

As will be shortly seen, the magnitude of the nonzero 
coefficients of $\beta$ has to satisfy certain constraints: 
as in \cite{CandesPlan:AnnStat09}, one will require for both Strategies that 
the nonzero components of $\beta$ are not too small (in fact, slightly above the noise level). In the case
of Strategy (B), we will moreover require that the nonzero components of $\beta$ are not too large. 
Although this upper bound assumption may seem to argue in disfavor of Strategy (B), computational
experiments will later show that this Strategy has much nicer empirical performance when the signal 
to noise ratio is small. The same computational experiments will also demonstrate that Strategy (A) 
performs almost as well as a standard LASSO which would know the variance.

\subsection{Definition of the estimators}

To define our estimators, we first need to work with matrices ensuring that the map 
$\lb\mapsto \ch{\beta}_\lb$, where $\ch{\beta}_\lb$ is given by (\ref{lass}), 
is well defined and enjoys special properties, 
such as continuity. \\

\begin{defi}
The matrix $X$ is said to satisfy the Generic Condition if 
\bea
\label{gc}
\left|\la x_j,X_I(X_I^tX_I)^{-1} \delta_I \ra \right|<1, \ \forall \delta \in \{-1,1\}^p, \ 
\forall I \subset \{1,\ldots,p\} \textrm{ s.t. } X_I \textrm{ non singular and } 
\forall j\not\in I. 
\eea
\end{defi}

As from now, we always work under the Generic Condition. We will use the following result about uniqueness of the LASSO estimator. \\

\begin{prop}[\cite{Dossal:HAL11}]
\label{propun}
Assume that $X$ satisfies the Generic Condition. 
Then, for all $y\in \R^n$, and for all $\lb \in \R_+$, Problem (\ref{lass}) has a unique solution 
$\ch{\beta}_\lb$ and its support $\ch{T}_\lb$ is such that $X_{\ch{T}_\lb}$ is non singular. 
\end{prop}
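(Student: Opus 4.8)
The plan is to follow the now-standard route for establishing uniqueness of the LASSO solution under a genericity hypothesis, combining convex-analytic optimality conditions (the KKT/subgradient characterization) with the fact that the Generic Condition rules out the degenerate configurations. First I would write the optimality conditions for \eqref{lass}: $\ch\beta_\lb$ is a minimizer if and only if there exists $h\in\partial\|\ch\beta_\lb\|_1$ with $X^t(y-X\ch\beta_\lb)=\lb h$, where $h_j=\mathrm{sign}((\ch\beta_\lb)_j)$ on the support $\ch T_\lb$ and $|h_j|\le 1$ off it. Since the objective is convex, any two minimizers $\beta^{(1)},\beta^{(2)}$ give the same value of $Xb$ (the fidelity term is strictly convex in $Xb$ and the segment between two minimizers stays optimal), hence $X\beta^{(1)}=X\beta^{(2)}$ and $\|\beta^{(1)}\|_1=\|\beta^{(2)}\|_1$, and moreover the subgradient vector $h$ is common to all minimizers because $h=X^t(y-X\beta^{(i)})/\lb$ depends only on $X\beta^{(i)}$. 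Consequently all minimizers share the same ``equicorrelation set'' $E=\{j:\ |h_j|=1\}$, and every minimizer is supported in $E$.

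Next I would argue that $X_E$ must be injective, which forces uniqueness: if $X_E$ had a nontrivial kernel, one could move along a kernel direction away from a given minimizer while preserving $Xb$ and, for a suitably chosen direction and sign pattern, not increasing the $\ell_1$ norm, producing a continuum of minimizers whose supports eventually become strict subsets of $E$; at such a boundary minimizer $\ch\beta$ with support $\ch T\subsetneq E$ one exhibits a $j\in E\setminus \ch T$ with $|h_j|=1$ and, using the first-order condition restricted to $\ch T$ (namely $X_{\ch T}^t(y-X_{\ch T}\ch\beta_{\ch T})=\lb\,\mathrm{sign}(\ch\beta_{\ch T})$, equivalently $\ch\beta_{\ch T}=(X_{\ch T}^tX_{\ch T})^{-1}(X_{\ch T}^ty-\lb\,\mathrm{sign}(\ch\beta_{\ch T}))$ once $X_{\ch T}$ is seen to be nonsingular), one computes that $h_j=\la x_j, X_{\ch T}(X_{\ch T}^tX_{\ch T})^{-1}\mathrm{sign}(\ch\beta_{\ch T})\ra$ plus a term coming from the projection of $y$. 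The Generic Condition \eqref{gc} says precisely that the sign-pattern contribution has modulus strictly less than $1$; one then handles the $y$-dependent part either by noting it is a.e. harmless or, more cleanly, by running the argument at a vertex of the (polyhedral) solution set where the support is minimal, so that the extra term is controlled. This contradiction shows $X_{\ch T_\lb}$ nonsingular and, since a minimizer supported on a set on which $X$ is injective is determined by $Xb$ and its sign pattern, that the minimizer is unique.

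Concretely, the cleanest packaging is: (i) show the solution set $S_\lb$ is a nonempty compact convex polytope (coercivity of the objective since $\lb>0$, continuity, convexity); (ii) pick an extreme point $\ch\beta$ of $S_\lb$ with support $\ch T$; (iii) show $X_{\ch T}$ is nonsingular — injectivity because otherwise $\ch\beta$ is not extreme, and then $\ch T\le n$ forces square-or-tall, with the equicorrelation identity giving surjectivity onto the relevant span; (iv) invoke \eqref{gc} to show that in fact every index of the equicorrelation set $E$ already lies in $\ch T$, so $E=\ch T$, whence $S_\lb=\{\ch\beta\}$ because all minimizers live in $E=\ch T$ and $X_{\ch T}$ injective pins down $b$ from $Xb$. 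Steps (i) and (ii) are routine; the substance is (iii)–(iv).

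The main obstacle I anticipate is the $y$-dependent term in the expression for $h_j$ when $j\in E\setminus\ch T$: the Generic Condition as stated only bounds $|\la x_j,X_I(X_I^tX_I)^{-1}\delta_I\ra|$, i.e. the contribution of the sign vector, not of $\gP_{V_I}y$. The fix is to exploit that $j$ being in the equicorrelation set means $x_j^t(y-X\ch\beta)=\pm\lb$ while simultaneously $x_j^t(y-X\ch\beta)=x_j^t\gP_{V_{\ch T}^\perp}y - \lb\,\la x_j, X_{\ch T}(X_{\ch T}^tX_{\ch T})^{-1}\mathrm{sign}(\ch\beta_{\ch T})\ra$; since enlarging $\ch T$ by $j$ keeps $X$ injective (one checks $X_{\ch T\cup\{j\}}$ is still nonsingular, using the Generic Condition for $I=\ch T$ to see $x_j\notin V_{\ch T}$), one gets an \emph{alternative} minimizer supported on $\ch T\cup\{j\}$, contradicting extremality of $\ch\beta$ unless no such $j$ exists. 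That is the delicate bookkeeping step, and it is exactly where genericity is used; everything else is bookkeeping with the subdifferential of $\|\cdot\|_1$. Since the proposition is attributed to \cite{Dossal:HAL11}, I would in practice just cite it, but the above is the self-contained argument.
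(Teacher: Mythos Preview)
The paper does not prove this proposition; it simply quotes it from Dossal \cite{Dossal:HAL11}. So there is no ``paper's own proof'' to compare against beyond the citation. That said, your sketch deserves a comment because step~(iv) has a real gap.

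Steps (i)--(iii) are fine and standard: the solution set $S_\lb$ is a nonempty compact polytope, an extreme point $\ch\beta$ has support $\ch T$ with $X_{\ch T}$ injective, and all solutions share the same equicorrelation set $E$ and the same fitted value $X\ch\beta$. The trouble is your plan to derive a contradiction from $|h_j|=1$ for $j\in E\setminus\ch T$. As you correctly note, $h_j$ splits as
\[
h_j \;=\; \tfrac1\lb\, x_j^t \gP_{V_{\ch T}^\perp}y \;+\; \la x_j,\ X_{\ch T}(X_{\ch T}^tX_{\ch T})^{-1}\sgn(\ch\beta_{\ch T})\ra,
\]
and the Generic Condition only bounds the second summand. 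Your proposed fix (``one gets an alternative minimizer supported on $\ch T\cup\{j\}$, contradicting extremality'') is not justified: knowing $|h_j|=1$ gives a zero one-sided directional derivative along $\pm e_j$, but that does not by itself manufacture another minimizer, and having a second minimizer supported on a superset of $\ch T$ would not contradict extremality of $\ch\beta$ anyway. Nor does the Generic Condition directly force $x_j\notin V_{\ch T}$.

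The clean way to close (iv) avoids $h$ and the $y$-term entirely. Assume $S_\lb$ is not a singleton and pick an edge with direction $d$ emanating from the extreme point $\ch\beta$. Let $I$ be the (constant) support along the edge interior with sign pattern $\delta_I$; then $d$ is supported in $I$, $Xd=0$, and $\delta_I^t d_I=0$ (constancy of $\|\cdot\|_1$ along the edge). Since $X_{\ch T}$ is injective and $X_I d_I=0$ with $d\neq 0$, necessarily $\ch T\subsetneq I$, and for every $j\in I\setminus\ch T$ one has $d_j\neq 0$, $\delta_j=\sgn(d_j)$, while $\delta_{\ch T}=\sgn(\ch\beta_{\ch T})$. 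Left-multiplying $X_{\ch T}d_{\ch T}=-\sum_{j\in I\setminus\ch T}d_j x_j$ by $\sgn(\ch\beta_{\ch T})^t(X_{\ch T}^tX_{\ch T})^{-1}X_{\ch T}^t$ and comparing with $\delta_I^t d_I=0$ yields
\[
\sum_{j\in I\setminus\ch T}|d_j|
\;=\;
\sum_{j\in I\setminus\ch T} d_j\,\la x_j,\ X_{\ch T}(X_{\ch T}^tX_{\ch T})^{-1}\sgn(\ch\beta_{\ch T})\ra.
\]
By the Generic Condition each inner product has modulus strictly less than $1$, so the right-hand side is strictly smaller than the left-hand side --- a contradiction. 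Hence $S_\lb=\{\ch\beta\}$. This is essentially Dossal's argument; your outline was on the right track, but the crucial use of \eqref{gc} goes through the kernel direction $d$, not through the dual variable $h$, and that is exactly what dissolves the ``$y$-dependent term'' obstacle you flagged.
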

\

The following property is proven in Appendix \ref{lem-continuity}:\\
\begin{lemm}
\label{cont}
Let the Generic Condition hold. Then, almost surely, the map 
\bef{}
(0,+\infty) & \longrightarrow & \R^{p}\\
\nonumber
\lambda & \longmapsto & \ch{\beta}_{\lambda}
\eef
is bounded and continuous. Moreover, its $\ell_{1}$-norm is non-increasing.
\end{lemm}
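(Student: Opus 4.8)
The plan is to exploit the classical polyhedral/piecewise-linear structure of the LASSO solution path in $\lambda$, which is available here because the Generic Condition guarantees (via Proposition \ref{propun}) that the minimizer $\ch{\beta}_\lambda$ is unique for every $\lambda>0$ and is supported on a set $\ch T_\lambda$ for which $X_{\ch T_\lambda}$ is nonsingular. First I would write down the KKT (subgradient) optimality conditions for (\ref{lass}): $X^t(y-X\ch{\beta}_\lambda) = \lambda\, g_\lambda$ with $g_\lambda\in\partial\|\ch{\beta}_\lambda\|_1$, i.e. $g_{\lambda,j}=\mathrm{sign}(\ch{\beta}_{\lambda,j})$ on the support and $|g_{\lambda,j}|\le 1$ off it. On any maximal $\lambda$-interval where the support $\ch T$ and the sign vector $\delta_{\ch T}$ stay fixed, these conditions solve explicitly to $\ch{\beta}_{\lambda,\ch T} = (X_{\ch T}^tX_{\ch T})^{-1}(X_{\ch T}^t y - \lambda\,\delta_{\ch T})$ and $\ch{\beta}_{\lambda,\ch T^c}=0$, which is an affine (hence continuous, and locally bounded) function of $\lambda$; the strict inequality in the Generic Condition is exactly what keeps the off-support KKT inequalities strict, so these active-set/sign regions are relatively open and their boundaries form a locally finite set. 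Uniqueness of the minimizer at every $\lambda$ then forces the affine pieces to agree at the breakpoints, giving global continuity on $(0,+\infty)$.

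For the continuity argument at breakpoints I would prefer a soft route that avoids enumerating transition types: use a standard stability property of argmin of a jointly continuous, strictly convex-in-$b$ family. Concretely, fix $\lambda_0>0$ and a sequence $\lambda_k\to\lambda_0$; by the boundedness claim (proved next) the $\ch{\beta}_{\lambda_k}$ lie in a compact set, so along any subsequence $\ch{\beta}_{\lambda_k}\to \beta^\star$; passing to the limit in the inequality $\tfrac12\|y-X\ch{\beta}_{\lambda_k}\|_2^2+\lambda_k\|\ch{\beta}_{\lambda_k}\|_1 \le \tfrac12\|y-Xb\|_2^2+\lambda_k\|b\|_1$ for all $b$ shows $\beta^\star$ is a minimizer at $\lambda_0$, hence $\beta^\star=\ch{\beta}_{\lambda_0}$ by Proposition \ref{propun}; since every subsequence has a further subsequence converging to the same limit, $\ch{\beta}_{\lambda_k}\to\ch{\beta}_{\lambda_0}$. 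This gives continuity on all of $(0,+\infty)$.

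For boundedness: comparing the objective at $\ch{\beta}_\lambda$ with the value at $b=0$ gives $\lambda\|\ch{\beta}_\lambda\|_1\le \tfrac12\|y\|_2^2$, so $\|\ch{\beta}_\lambda\|_1\le \|y\|_2^2/(2\lambda)$, which is bounded on any interval $[\lambda_-,\infty)$; near $\lambda\to 0^+$ one instead uses that $\ch{\beta}_\lambda$ stays in a bounded region because its support $\ch T_\lambda$ always has $X_{\ch T_\lambda}$ nonsingular and the fit $X\ch{\beta}_\lambda$ converges to the (bounded) projection of $y$ onto the relevant subspace, together with a uniform lower bound on the smallest singular value over the finitely many submatrices $X_I$ with $X_I$ nonsingular. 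For monotonicity of $\lambda\mapsto\|\ch{\beta}_\lambda\|_1$: take $0<\lambda_1<\lambda_2$, write the two optimality inequalities (plug $\ch{\beta}_{\lambda_2}$ into the $\lambda_1$-problem and vice versa), add them; the least-squares cross terms combine into $-\|X(\ch{\beta}_{\lambda_1}-\ch{\beta}_{\lambda_2})\|_2^2\le 0$ and one is left with $(\lambda_2-\lambda_1)(\|\ch{\beta}_{\lambda_2}\|_1-\|\ch{\beta}_{\lambda_1}\|_1)\le 0$, whence $\|\ch{\beta}_{\lambda_2}\|_1\le\|\ch{\beta}_{\lambda_1}\|_1$.

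The step I expect to be the main obstacle is the behavior as $\lambda\to 0^+$: the generic bound $\|\ch{\beta}_\lambda\|_1\le\|y\|_2^2/(2\lambda)$ blows up, so boundedness there genuinely needs the structural input (uniform control of $\|(X_I^tX_I)^{-1}\|$ over the finite family of admissible active sets, plus the fact that the path visits only finitely many such sets on a neighborhood of $0$), and one must be a little careful that "almost surely" in the statement is what allows us to assume the Generic Condition and hence this finiteness; everything else is the routine convexity bookkeeping sketched above.
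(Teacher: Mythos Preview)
Your proof is correct, but several pieces are organized differently from the paper's argument, and one small slip is worth flagging.

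\textbf{Continuity.} Your subsequence/compactness/uniqueness argument is essentially the paper's: it assumes a discontinuity at some $\lambda^\circ$, extracts (via boundedness) two subsequential limits, and observes that both are minimizers of $\mathcal L(\lambda^\circ,\cdot)$, contradicting Proposition~\ref{propun}. No real difference here.

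\textbf{Monotonicity of $\|\ch\beta_\lambda\|_1$.} Here the approaches diverge. The paper works with the value function $\theta(\lambda)=\inf_b \mathcal L(\lambda,b)$, notes it is concave (infimum of affine functions of $\lambda$), and invokes the ``filling property'' to identify $\theta'(\lambda)=\|\ch\beta_\lambda\|_1$; concavity then gives that this is non-increasing. Your swap argument (plug each minimizer into the other problem, add) is more elementary and avoids any subdifferential calculus. One correction: when you add the two inequalities the quadratic terms cancel \emph{exactly}; there is no residual $-\|X(\ch\beta_{\lambda_1}-\ch\beta_{\lambda_2})\|_2^2$ term. This does not affect the conclusion, which follows immediately from the cancellation.

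\textbf{Boundedness.} The paper splits the other way: it bounds $\|\ch\beta_\lambda\|_1$ on any compact $(0,M]$ directly from the closed-form $\ch\beta_{\ch T_\lambda}=(X_{\ch T_\lambda}^tX_{\ch T_\lambda})^{-1}(X_{\ch T_\lambda}^ty-\lambda\,\delta)$ by maximizing over the finite family $\Sigma$ of admissible $(S,\delta)$, and then invokes the already-proved monotonicity to extend to $(0,\infty)$. Your route (easy bound on $[\lambda_-,\infty)$ via comparison with $b=0$, then a structural bound near $0$) reaches the same place; note that your ``difficult'' end, $\lambda\to 0^+$, is precisely where the paper's closed-form argument is most direct, so the paper's ordering (monotonicity first, then boundedness) streamlines this step.
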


\subsubsection{Strategy A}
\label{defStratA}
The estimator of strategy A is defined as $\ch \beta:=\ch \beta_{\ch \lb}$ where $\ch\lb$ verifies the implicit equation
\bea \label{eqA}
\ch\lb^2 & = & C_{{\rm var}}  \frac{\left\|y-X \ch{\beta}_{\ch \lb}\right\|_2^2}{n} \log p.
\eea
The estimators $(\ch \beta, \ch \lb)$ being implicitly defined, it is not clear, at that point, that they exist.\\
  
We will see in the sequel that a suitable choice of $C_{{\rm var}}$ will ensure the existence of the estimators (under the above mentioned assumptions on $X$).

The uniqueness follows by showing that the map $\Gamma_A: \R_+ \to \R_+$ given by 
\bean
\Gamma_A(\lb) & := & \frac{n}{\log p}\ \frac{ \lb^2}{\|y-X_{\ch{T}_\lb}\ch{\beta}_{\ch{T}_\lb}\|_2^2},
\eean
is increasing, which is proven in Appendix \ref{gammaA}.

Strategy A simply reduces to finding the value $\ch{\lb}_A$ such that 
$\Gamma_A(\ch{\lb}_A)=C_{{\rm var}}$. 
A precise range of interest for $C_{\rm var}$ will be given in Theorem \ref{main2} below. 
Moreover, using the existence and uniqueness result, 
one can use a fixed point scheme to find $\ch{\lb}$. This 
scheme is discussed in Section \ref{algoA}. \\

\begin{rema} \it
Recall that in the known variance case, it is often assumed that 
\bea
\label{lbchoice}
\lb^2 & = & C_{{\rm var}} \sg^2 \log p,
\eea 
for some positive constant $C_{{\rm var}}$; see e.g. in \cite{CandesPlan:AnnStat09}. 
In comparison, Strategy (A) enforces the choice (\ref{eqA}).
This is the empirical analog to (\ref{lbchoice}). However, as will appear later in the proof of Theorem \ref{main2}, 
instead of being an absolute constant, $C_{{\rm var}}$ will have to depend on $n$, $p$ and $\|X\|^2$ as
follows
\bean
C_{{\rm var}} & \asymp & \frac{n}{p} \|X\|^2.
\eean 
In the case of an i.i.d. Gaussian random design matrix, $\|X\|^2$ is of the order $p/n$ with high probability.
Thus $C_{{\rm var}}$ can be basically 
seen as a constant in the Gaussian setting. 
\end{rema}

\subsubsection{Strategy B}
\label{defStratB}
The estimator of strategy B is defined as $\ch \beta:=\ch \beta_{\ch \lb}$ where $\ch \lb$ verifies the implicit equation
\bea
\ch\lb  \|\ch{\beta}_{\lambda}\|_1& = & C \left\|y-X \ch{\beta}_{\ch \lb}\right\|_2^2.
\eea
Again, the estimators $(\ch \beta, \ch \lb)$ are implicitly defined and their existence has to be proven.\\
  
Compared to Strategy A, one specificity of Strategy B is that for 
any value of $C>0$, existence and uniqueness of the estimators is garanteed, 
with no other assumptions than the Generic Condition. 
Indeed, we show here (cf Lemma \ref{gammafunc} in the Appendix) that the map $\Gamma_B$ given by 
\bea
\label{gammaB}
\Gamma_B(\lb) & = & 
\frac{\lambda\|\ch{\beta}_{\lambda}\|_1}{\|y-X \ch{\beta}_{\lambda}\|_2^2}, \quad \lb>0,
\eea
is increasing, continuous and $\Gamma_B((0,+\infty))=(0,+\infty)$. Thus, there exists a unique value 
$\ch{\lb}_B>0$ such that $\Gamma_B(\ch{\lb}_B)=C$. 

Similarly as for Strategy A, a fixed point scheme will be discussed in Section \ref{Simuls}. \\

\subsection{Main results}

\subsubsection{Preliminary remarks}
\label{rem}
The main idea behind the analysis of LASSO-type methods is the following. First, the $\ell_1$ penalty 
promotes sparsity of the estimator $\ch{\beta}$. Since $\ch{\beta}$ is sparse, we may restrict the study 
to the subvector $\ch{\beta}_{\ch{T}}$ of $\ch \beta$, resp. the submatrix $X_{\ch{T}}$ of $X$, whose components, 
resp. columns, are indexed by $\ch{T}$. 

Taking this idea a little further, since $\ch{T}$ is supposed 
to estimate the true support $T$ of cardinality $s$, 
the first kind of result one may ask for is a proof that $X_T$ is far from singular for every possible $T$. Unfortunately, proving such a strong property with the right order in the upper bound on $s$, 
based on incoherence only, seems to be impossible. The idea proposed by Cand\`es and Plan in \cite{CandesPlan:AnnStat09} to overcome this problem is to assume that $T$ is random and then prove 
that non-singularity occurs with high probability, i.e. for most supports. 

Based on this model, the method first consists of proving that $X_T$ satisfies, for $0<r<1$,
\bea
\label{isomr0} 
1-r  \le \sigma_{\min}(X_T)\le \sigma_{\max}(X_T)  \le 1+r,
\eea  
with high probability. The proof of this property in \cite{CandesPlan:AnnStat09} is based on the Non-Commutative 
Kahane-Kintchine inequalities. In the present paper, we instead use a result of \cite{cd_inv} based on a recent version of the Non-Commutative 
Chernoff inequality proposed by Tropp \cite{Tropp:ArXiv10}, in order to obtain better estimates for the involved constants. 
The most intuitive conditions to prove (\ref{isomr0}) are:
\begin{itemize}
\item[(i)] $T$ is a random support with uniform distribution on index sets with cardinal $s$;
\item[(ii)]  $s$ is sufficiently small;
\item[(iii)]  $X$ is sufficiently incoherent.
\end{itemize}

The main part of the analysis consists of proving that the least-squares oracle estimator, 
which knows the support ahead of time, satisfies the optimality conditions of the LASSO estimator
with high probability. This will prove that the LASSO 
automatically detects the right support and sign pattern. The proofs of these results highly depend on the 
quasi-isometry condition (\ref{isomr0}) and similar properties obtained with the same techniques 
as for (\ref{isomr0}). We also need the sign pattern of $\beta$ to be uniformly distributed and jointly independent of the support of $T$. This assumption was
already invoked in \cite{CandesPlan:AnnStat09}.

\subsubsection{Assumptions and main results}

The first so-called Coherence condition deals with the 
minimum angle between the columns of $X$.
\begin{ass}{\rm (Range and Coherence condition for $X$)}
\label{ass1}
The matrix $X$ has unit $\ell_2$-norm columns, is full rank and its coherence verifies 
\bean
\mu(X) & \le & \frac{C_\mu}{\log p},
\eean
for some numerical constant $C_\mu>0$.
\end{ass}

\bigskip

\begin{ass}{\rm (Generic sparse model \cite{CandesPlan:AnnStat09})}
\label{ass2}
\begin{enumerate}
\item The support $T$ of $\beta$ is random and has uniform distribution among all index subsets of 
$\left\{ 1,\ldots,n\right\}$ with cardinal $s$,
\item Given $T$, the sign pattern of $\beta_T$ is random with uniform distribution over $\{-1,+1\}^{s}$, and jointly independent of the support.
\end{enumerate}
\end{ass}
\ 

The last condition concerns the magnitude of the nonzero regression coefficients $\beta_j$, $j\in T$. 
%The conditions are quite technical but are nevertheless easy to interpret. 
Let $\alpha>0$, $r\in(0,\frac12]$ and 
\bea \label{kappa}
\kappa & =& 4\sqrt{1+\alpha}.
\eea
Let us now define 
\bea
\mathsf{H}_{\alpha,r}^{n,s_0,p} &= & 
4 \frac{\sqrt{n}+\sqrt{2\alpha \log p}}{\sqrt{s_0}} \frac{1-r}{\sqrt{1+r}}.
\label{mH}
\eea
Let us introduce the function
\bean
\ell_\alpha(x) & = & x e^{-4\alpha / x}, \quad x>0.
\eean
Since $\ell_\alpha((0,+\infty))=(0,+\infty) $, the following constant $C_\circ:=C_\circ(\alpha,r)$ is well defined:
\bea
\label{Ccirc}
\ell_\alpha(C_\circ) & = & 10 e \frac{1+r}{(1-r)^2} \kappa^2 >0.
\eea
It will appear in the number $n$ of observations (explaining the index '$\circ$').\\

We can now define the range assumption for the coefficients of $\beta$ for Strategy (A).\\

\begin{ass}{\rm (Range condition for $\beta$: Strategy (A))}
\label{ass3high}
The unknown vector $\beta$ verifies
\bea
\label{minbetH}
\min_{j\in T}|\beta_{j}| &\ge & \mathsf{H}_{\alpha,r}^{n,s_0,p} \ \sg.
\eea
\end{ass}

%\bigskip

Our main results show that the estimators $\ch \beta$ defined by either Strategy (A) or 
Strategy (B) recovers the support and sign pattern of $\beta$ exactly with probability of the order $1-O(p^{-\alpha})$
using similar bounds on the coherence and the sparsity as in \cite{CandesPlan:AnnStat09}. \\

As from now, let us choose $r\in(0,\frac12]$ and set:
\bea
C_{\rm spar}& = & \frac{r^2}{(1+\alpha)e^2} \\
C_\mu & = & \frac{r}{1+\alpha}.
\eea

\begin{theo}\label{main2}
Let $\alpha >0$ and $ p\ge e^{8/\alpha}$. Let Assumption \ref{ass1} hold with $C_\mu$ given above. Let Assumptions \ref{ass2} and \ref{ass3high} hold with
\bea
s & \le & s_0:=\frac{p}{\log p} \ \frac{C_{\rm spar}}{\|X\|^{2}} \label{ups}
\label{s}\\
n & \ge & s \left(C_\circ \log p +1\right) \label{lown}.
\eea
Then 
the probability that the estimator 
$\ch \beta$ defined by Strategy (A) with 
\bea \label{int-cvar}
C_{{\rm var}} & \in & \left[ \frac{(1-r)^2}{20(1+r)C_{\rm spar}}\ \frac{n}{p} \|X\|^{2} ; \ \frac{(1-r)^2}{2(1+r)C_{\rm spar}}\ \frac{n}{p} \|X\|^{2}\right],
\eea
 exactly recovers the support and sign pattern of 
$\beta$ is greater than $ 1-228/p^{\alpha}$.
\end{theo}
\ 

\begin{rem}
The choiCe of the constant 20 is unessential and the reader can check for himself which range 
is relevant for his own specific application.
\end{rem}

We now turn to Strategy (B).
Let us define for $C>0$,
\bea
\mathsf{L}_{\alpha,r,C}^{n,s,p} &= &  \label{mL} 
\max\left(  2\frac{\sqrt{1+2C}}{C\sqrt{1-r}} \frac{\sqrt{n-s}+\sqrt{2 \alpha \log p}}{ \sqrt{s}} \ ,
2 \frac{\sqrt{s}+\sqrt{2 \alpha \log p}}{\sqrt{1-r}\ \sqrt{s}} \right)\\
\mathsf{M}_{\alpha,r,C}^{n,s,p}  &=& \frac{n-s}{\sqrt{\log p}}\ \frac{1}
{3 \kappa C} \left(\frac{\sqrt{\pi (n-s)}}{p^{\alpha}} \right)^{\frac{4}{n-s}}.    \label{M} 
\eea

Let us state the corresponding range assumption for the coefficients of $\beta$.
\begin{ass}{\rm (Range condition for $\beta$: Strategy (B))}
\label{ass3low}
The unknown vector $\beta$ verifies
\bea
\label{minbetL}
\min_{i\in T}|\beta_{j}| &\ge & \mathsf{L}_{\alpha,r,C}^{n,s,p} \ \sg,\\
\label{maxbetL}
\|\beta\|_1 & \le & \mathsf{M}_{\alpha,r,C}^{n,s,p}  \ \sg.
\eea
\end{ass}

\bigskip

\begin{theo}\label{main3}
Let $\alpha >0$, $ p\ge e^{8/\alpha}$ and set $c_\circ  = \frac{(6\kappa)^2e}{1-r}$. Choose $C>0$. Let Assumptions \ref{ass1}, \ref{ass2} and \ref{ass3low} hold
with 
\bea
s & \le & \frac{p}{\log p} \ \frac{C_{\rm spar}}{\|X\|^{2}},
\label{ss}\\
n & \ge & c_\circ(1+2C) \ s \log p +s. \label{n-main3}
\eea
Then the probability that the estimator 
$\ch \beta$ defined by Strategy (B) exactly recovers the support and sign pattern of 
$\beta$ is greater than $ 1-229/p^{\alpha}.$
\end{theo}

\subsection{Important comments}

\subsubsection{About $X$}
\label{aboutX}
The normalized Gaussian example is instructive. 
First, when $X$ is obtained from a random matrix with i.i.d. standard Gaussian random entries by normalizing the columns, the coherence is of the order $\sqrt{\log p/n}$ (See below for a short proof). 
Therefore, taking $n$ of the order of $\log^3 p$ is sufficient for satisfying the Incoherence Assumption \ref{ass1}.
Second, it is also well known that $\|X\|^{2}$ is of the order $p/n$, see e.g. \cite {Rudelson:ICM}. 
%\cite{Tao:Blog10}. 
This suggests in particular that the upper bound (\ref{ss}) on the number $s$ of nonzero components of $\beta$ may be understood in the Gaussian 
setting as  
\bean
s & \le & \frac{p}{\log p} \ \frac{C_{\rm spar}}{\|X\|^{2}}=O\left(\frac{n}{\log p}\right).
\label{s2}
\eean
This order of magnitude might be also valid for much more general random designs.  

Notice that the estimate $\sqrt{\log p/n}$ of the coherence for i.i.d. Gaussian matrices with normalized columns 
easily follows from the Paul Levy concentration of measure phenomenon on the sphere and 
the union bound: Since, due to normalization, each column is Haar distributed on the unit sphere,  
rotational invariance implies that the scalar product of two column vectors $X_j$ and $X_{j^\prime}$ satisfies 
\bean
\bP \left(\left|\la X_j,X_{j^\prime}\ra\right| \ge u \right) 
& = \bP \left(\left|\la X_j,X_{j^\prime} \ra\right| \ge u \mid X_{j^\prime} \right)
\le & 2 \exp\left(-c n \ u^2 \right),
\eean
for some constant $c$, by the well known concentration of measure phenomenon on the unit sphere. Thus, 
the union bound gives 
\bean
\bP \left(\max_{1\le j<j^\prime\le p}\left|\la X_j,X_{j^\prime} \ra\right| \ge u \right)
& \le & \frac{p(p-1)}2 \cdot 2 \exp\left(-c n \ u^2 \right), \\
& \le & \exp \left(-c n \ u^2 +2\log p \right).
\eean
This last quantity is less that $p^{-\alpha}$ for $u \ge c^\prime  \ \sqrt{\log p/n}$ with
$c^\prime= \sqrt{(\alpha+2)/c}$.

An interesting question concerns the pertinence of the coherence for the problem of variable selection using the 
LASSO. The work of \cite{Wainwright:IEEEIT09} shows through numerical investigations 
that certain conditions on the matrix $X$ (requiring in particular the knowledge of the true signal's support, without any statistical assumptions on beta though), 
allow to deduce sharp bounds on the minimum sample size needed for exact support recovery. When the 
true support is not known ahead of time, conditions such as the ones in \cite{Wainwright:IEEEIT09} 
are required to hold uniformly or at least for most support with high probability. Proving such 
a property for matrices more general than i.i.d. Gaussian matrices implies loosing sharp bounds on the minimum sample size. 
The advantage of the coherence over such assumptions relies in the fact that it can be computed very easily for 
any given matrix. The main drawback is that the resulting bounds on the minimum sample size might not be sharp.

\subsubsection{Order of $\mathsf{H}_{\alpha,r}^{n,s_0,p}$ }
%We notice that these two quantities share the same behavior as concerns their  dependency on $n$, $s$ and $p$. 
In the case where $X$ is i.i.d. Gaussian, the order of $s_0$ is $n/\log p$ and thus the order of $\mathsf{H}_{\alpha,r}^{n,s_0,p}$
is $\sqrt{\log p}$, just as in \cite{CandesPlan:AnnStat09}. Indeed,
\bean
\mathsf{H}_{\alpha,r}^{n,s_0,p}
 & \asymp & 
 \frac{\sqrt{n}+\sqrt{2\alpha \log p}}{\sqrt{\frac{n}{\log p}}} \ \asymp \  \sqrt{\log p}.
\eean

\subsubsection{About $C$ and $\mathsf{L}_{\alpha,r,C}^{n,s,p}$}
\label{slide}
Increasing the upper bound on the magnitude of the $\beta_j$'s via decreasing the constant $C$ also results 
in increasing the lower bound. Therefore, $C$ governs a sliding window inside which the coefficients of $\beta$ 
can be recovered by the LASSO. Moreover for a given $n$, one can decrease the lower bound $\mathsf{L}_{\alpha,r,C}^{n,s,p}$ in Eq. (\ref{mL}) by increasing $C$. This would result on a smaller sparsity in Eq. (2.26).
Taking $C$ as $C \sim n/(s\log p)$ implies the usual order $\sqrt{\log p}$ for the minimum of beta's (See Eq. (\ref{mL})). If one wants to specify $C$ in a way that is independent of $s$ one may run the risk of prescribing an incorrect order for $\mathsf{L}_{\alpha,r,C}^{n,s,p}$ as a function of $n$. This technical issue should however be considered as of theoretical interest only and not so much of a problem in practice. As an analogy, consider the plain LASSO with known variance: there exists a universal way of choosing the parameter $\lb$, but many practitioners use the LARS instead in order to explore all the supports occuring on the $\lb$-trajectory and compare them using a standard model selection procedure (AIC, BIC, Foster and George, etc). In the same manner, one could also vary the value of $C$ and compare all supports on this trajectory. In this spirit,  our simulation experiments show the histogram of recovered and incorrectly detected components over a large range of values of $C$. One nice surprise is that Strategy (B) is quite robust vs. the actual choice of $C$ at such a low signal to noise ratio level.

\subsubsection{About the constants $C_{\rm spar}$ and $C_\mu$}

Let us compare the numerical values of these constants to the one obtained in \cite{CandesPlan:AnnStat09}.

One of the various constraints on the rate $\alpha$ in \cite{CandesPlan:AnnStat09} is given by the theorem of Tropp in \cite{Tropp:CRAS08}. In this setting,
\bean
\alpha & = & 2\log 2\\
r & = & 1/2,
\eean
the author's choice of $1/2$ being unessential.
To obtain such a rate $\alpha$, they need to impose the r.h.s. of (3.15) in \cite{CandesPlan:AnnStat09} to be less than $1/4$, that is:
\bea\label{C_cp}
30 C_{\mu} + 13 \sqrt{2C_{\rm spar}} & \le & \frac{1}{4}.
\eea
This yields $C_{\rm spar} <  1.19 \times 10^{-4}$. Let us choose $C_{\rm spar}$ close to this maximum allowed. Then, compute $C_{\mu}$ by (\ref{C_cp}). This yields
\bean
C_{\rm spar} \simeq  1.18\ 10^{-4},\quad
C_{\mu} \simeq  1.7\ 10^{-3}.
\eean
(The additional condition coming from the end of the proof of \cite[Lemma 3.5]{CandesPlan:AnnStat09}, that is $\frac{3}{64 C_\mu^2}=2\log(2)$, is not limiting since $\sqrt{3/(128 \log 2)} \gg 1.7\ 10^{-3}$.)
\smallskip

Our theorem allows to choose any rate $\alpha>0$. To make a fair comparison, let us also choose $\alpha  =  1.5>2\log 2$ and $r  =  1/2$.
We obtain:
\bean
C_{\rm spar}  \simeq 1.4\ 10^{-2}, \quad
C_{\mu}  =  0.2.
\eean

\section{Proof of Theorem \ref{main2}}
\label{Proofmain2}

The proof is divided into several steps. The main two steps are as follows. First, we provide the description and consequences of 
the optimality conditions for the standard LASSO estimator as a function of $\lb$. Second, we  
prove that these optimality conditions are satisfied by a simple and natural oracle estimator. 

\subsection{Enforcing the invertibility assumption}

We recall the basic result we proved in \cite{cd_inv} regarding the invertibility of random submatrices via the Non-commutative Chernoff Inequality.\\

\begin{theo}\label{main1}
Let $r\in(0,1)$, $\alpha \ge 1$. Let $X$ be a full-rank $n\times p$ matrix and $s$ be
positive integer, such that
\bean
\mu(X) & \le &  \frac{r}{2(1+\alpha)\log p}\\
s & \le & \frac{r^2}{4(1+\alpha) e^2}\ \frac{p}{ \|X\|^{2} \log p }.
\eean
Let $T\subset \left\{1,\ldots,p\right\}$ %$T:\Omega \to \{I\subset \left\{1,\ldots,p\right\}, |I|=s\}$
be a set with cardinality $s$, chosen randomly from the uniform distribution.
Then the following bound holds:
\bea \label{sing}
\bP \left(\|X_T^tX_T-\Id_s \|\ge r \right) & \le & \frac{216}{p^{\alpha}}.
\eea
\end{theo}

By Theorem \ref{main1}, we have
\bea
\label{b1}
(1+r)^{-1} \ \le \ \|\left(X_T^tX_T\right)^{-1}\| \ \le \ (1-r)^{-1}\\
\label{b2}
(1-r)^{1/2} \ \le \ \|X_T\| \ \le \ (1+r)^{1/2}
\eea
with probability greater than $1- 216 \ p^{-\alpha}$. Thus, throughout this section, we will assume that 
(\ref{b1}) and (\ref{b2}) hold, i.e. we will reduce all events considered to their intersection with 
the event that (\ref{b1}) and (\ref{b2}) are satisfied.

\subsection{The oracle estimator for $\ch \beta$ and $\ch \lb$}
\label{oraclesig}
We now discuss the next step of the proof of Theorem \ref{main2}, which consists of studying 
some sort of oracle estimators for $\beta$ which enjoys the property of knowing the support $T$ of $\beta$
ahead of time. 

%Recall that we have $\frac12 \|Y-X_{\ch{T}}\ch{\beta}\|^2=C\ch{\lambda} \|\ch{\beta}_{\ch{T}}\|_1$. Moreover,
%$\|\ch{\beta}\|_1=\sgn\left(\ch{\beta}_{\ch{T}}\right)^t \ch{\beta}_{\ch{T}}$. Using these equations,
%we would like to obtain the order of magnitude of the tendency of $\ch{\lambda}$.
For a given $\wdt{\lambda}$, one might like to consider the following oracle for $\ch{\beta}$:
\bea
\overline{\beta} & \in & \down{b\in \cal B}{\rm argmax}\ -\frac12 \|y-Xb\|_2^2 - \wdt{\lambda}\|b\|_1,
\eea
where
\bean
\cal B & =& \{b\in \mathbb R^p,\: {\rm supp}(b)=T,\ \sgn(b)=\sgn(\beta_{T}) \}.
\eean
However, it is not so easy to derive a closed form expression for $\overline{\beta}$.
Therefore, it might be more interesting to consider instead the following oracle:
\bea
\wdt{\beta} &\in &  \down{b\in \R^p,\: {\rm supp}(b)=T}{\rm argmax}
-\frac12 \|y-Xb\|_2^2 - \wdt{\lambda} \ \sgn(\beta_{T})^t b.
\eea
Indeed, $\wdt{\beta}$ satisfies
\bean
X_T^t\left(y-X_T\wdt{\beta}_T\right) - \wdt{\lambda} \: \sgn(\beta_{T}) & = & 0,
\eean
and we obtain that $\wdt{\beta}$ is given by
\bea
\label{tildebet}
\wdt{\beta}_{T}  & = & \left(X_T^tX_T\right)^{-1}\left(X_T^t y - \wdt{\lambda} \ \sgn(\beta_{T}) \right).
\eea
This formula is the same as in the proof of Th. 1.3 in \cite{CandesPlan:AnnStat09}, but here, $\wdt \lb$ is a variable.\\

Now let us recall that in the known variance case, Cand\`es and Plan assume that 
\bea
%\label{lbchoice}
\lb^2 & = & C_{{\rm var}} \sg^2 \log p,
\eea 
for some positive constant $C_{{\rm var}}$. It is then relevant to seek our oracle $\wdt \lb$ as:
\bea \label{condlb}
\wdt\lambda^2 & = &  C_{{\rm var}} \ \frac{\|y-X_T \wdt{\beta}_T\|_2^2}{n} \ \log p.
\eea
Replacing $\wdt{\beta}$ by its value (\ref{tildebet}), we obtain
\bean
C_{{\rm var}} \ \|y-X_T  \left(X_T^tX_T\right)^{-1} \left(X_T^ty -\wdt{\lambda} \ \sgn(\beta_{T}) \right) \|_2^2 &  = & \frac{n}{\log p} \ \wdt\lambda^2.
\eean
Thus,
\bean
C_{{\rm var}} \ \| \gP_{V_T^\perp}y + \wdt{\lambda}  X_T  \left(X_T^tX_T\right)^{-1}\sgn(\beta_{T}) \|_2^2
&  = & \frac{n}{\log p} \ \wdt\lambda^2,
\eean
and using the orthogonality relations, we obtain
\bean
C_{{\rm var}} \ \|\gP_{V_T^\perp}y\|_2^2 +\wdt{\lambda}^2 C_{{\rm var}} \ \|X_T  \left(X_T^tX_T\right)^{-1} \sgn(\beta_{T}) \|_2^2 &  = & \frac{n}{\log p} \ \wdt\lambda^2,
\eean
which is equivalent to
\bea
\label{lambtilde1}
\wdt{\lb}^2 
 & = &
\frac{  \|\gP_{V_T^\perp}z\|_2^2 }{\frac{n}{ C_{{\rm var}} \log p} - \ \|X_T  \left(X_T^tX_T\right)^{-1} \sgn(\beta_{T}) \|_2^2} 
\eea
We henceforth work with this definition of $\wdt{\lb}$.
Notice that $\wdt{\lb}$ is well defined whenever 
\bea \label{cond_up_Clb}
C_{{\rm var}}  & \le & \frac{n}{\|X_T  \left(X_T^tX_T\right)^{-1} \sgn(\beta_{T}) \|_2^2 \ \log p}.
\eea
The choice of $C_{{\rm var}}$ will be done in the next section.

\subsection{Study of the oracle $\wdt{\lb}$}
\label{lamblowsnr}

In this section, we provide a confidence interval for $\wdt{\lb}$. In particular, the first subsection shows that $\wdt{\lb}$ is well defined.

\subsubsection{Bounds on $\|X_T  \left(X_T^tX_T\right)^{-1} \sgn(\beta_{T}) \|_2^2$}
Using the lower bound on $\sg_{\min}(X_T)$ and the upper bound on $\sg_{\max}(X_T)$ given by (\ref{b1}) and (\ref{b2}), we have, with high probability:
\bea
\label{den}
\frac{1-r}{(1+r)^2} \ s \le & \|X_T  \left(X_T^tX_T\right)^{-1} \sgn(\beta_{T}) \|_2^2 & \le \frac{1+r}{(1-r)^2} \ s.
\eea 
We write the choice of $C_{{\rm var}}$ made in (\ref{int-cvar}) as
\bea \label{cvar}
\frac1{20} \frac{(1-r)^2}{1+r} \frac{n}{s_0 \ \log p} \ \le\  C_{{\rm var}}  \ \le \ \frac12 \frac{(1-r)^2}{1+r} \frac{n}{s_0 \ \log p},
\eea 
where $s_0$ is the maximum sparsity allowed in Inequality (\ref{ups}), namely,
\bean
s_0 & = & \frac{p}{\log p} \ \frac{C_{\rm spar}}{\|X\|^{2}}.
\eean
In particular, the condition (\ref{cond_up_Clb}) is satisfied which garantees that $\wdt{\lb}$ is 
indeed well defined.

\subsubsection{Bounds on $\| \gP_{V_T^\perp}z\|_2$}
\label{VTperp}
Using some well known properties of the $\chi^2$ distribution recalled in Lemma \ref{cki} in the Appendix, 
we obtain that
\bea
\label{tperp}
\bP\left(\|\gP_{V_T^\perp}(z)\|_2/\sigma \ge \sqrt{n-s}+ \sqrt{2t} \right) & \le & \exp(-t)
\eea
and
\bea
\label{u}
\bP\left(\|\gP_{V_T^\perp}(z)\|_2^2/\sigma^2  \le u (n-s)\right)  & \le & 
\frac2{\sqrt{\pi (n-s)}}\left(u\ e/2\right)^{\frac{n-s}4}.
\eea
Tune $u$ such that the r.h.s. of (\ref{u}) equals $2/p^{\alpha}$, i.e.
\bean
u & = & \frac{2}{e} \left(\frac{ \sqrt{\pi (n-s)}}{p^{\alpha}} \right)^{4/(n-s)}.
\eean
Thus, we obtain that
\bea
\label{major}
\frac{\|\gP_{V_T^\perp}z\|_2^2}{\sigma^2} \le  \left(\sqrt{n-s}+\sqrt{2 \log(\frac{p^{\alpha}}{2})}\right)^2
\le \left(\sqrt{n-s}+\sqrt{2\alpha \log p}\right)^2
\eea
and
\bea
\label{minor}
\frac{\|\gP_{V_T^\perp}z\|_2^2}{\sigma^2}  &  \ge &  \frac{2(n-s)}{e} \left(\frac{\sqrt{\pi (n-s)}}{p^{\alpha}} \right)^{4/(n-s)}
\eea
with probability greater than or equal to $1-2p^{-\alpha}$.

\subsubsection{Bounds on $\tilde{\lb}$}

\begin{lemm}
The following bounds hold:
\bea
\label{up}
\wdt{\lambda} & \le &  \sg \ \frac{1-r}{\sqrt{1+r}} \ \frac{\sqrt{n-s}+\sqrt{2\alpha \log p}}{\sqrt{s_0}} \\
\wdt{\lambda} & \ge & \kappa \ \sg \ \sqrt{\log p}. \label{low}
\eea
\end{lemm}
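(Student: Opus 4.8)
The plan is to estimate both sides of the explicit formula
\[
\wdt{\lb}^2 = \frac{\|\gP_{V_T^\perp}z\|_2^2}{\dfrac{n}{C_{\rm var}\log p} - \|X_T(X_T^tX_T)^{-1}\sgn(\beta_T)\|_2^2}
\]
obtained in (\ref{lambtilde1}), using the deterministic bounds (\ref{den}) on the denominator term and the high-probability bounds (\ref{major})--(\ref{minor}) on the numerator $\|\gP_{V_T^\perp}z\|_2^2$. For the \textbf{upper bound} (\ref{up}), I would lower-bound the denominator: from the right inequality of (\ref{cvar}), $\frac{n}{C_{\rm var}\log p} \ge \frac{2(1+r)}{(1-r)^2}\, s_0 \ge \frac{2(1+r)}{(1-r)^2}\, s$, while by the right inequality of (\ref{den}) the subtracted term is at most $\frac{1+r}{(1-r)^2}s$; hence the denominator is at least $\frac{1+r}{(1-r)^2}s$, in fact at least $\frac{1+r}{(1-r)^2}s_0$ after being slightly more careful (using $s\le s_0$). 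Combining this with the numerator bound (\ref{major}), $\|\gP_{V_T^\perp}z\|_2^2 \le \sg^2(\sqrt{n-s}+\sqrt{2\alpha\log p})^2$, and taking square roots yields exactly (\ref{up}).

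For the \textbf{lower bound} (\ref{low}), I would upper-bound the denominator and lower-bound the numerator. From the left inequality of (\ref{cvar}), $\frac{n}{C_{\rm var}\log p} \le \frac{20(1+r)}{(1-r)^2}s_0$, and since the subtracted term is nonnegative, the denominator is at most $\frac{20(1+r)}{(1-r)^2}s_0$. For the numerator, use (\ref{minor}): $\|\gP_{V_T^\perp}z\|_2^2 \ge \sg^2\,\frac{2(n-s)}{e}\left(\frac{\sqrt{\pi(n-s)}}{p^\alpha}\right)^{4/(n-s)}$. Dividing and invoking the lower bound $n-s \ge n/2$ (say, from (\ref{lown}) with $s$ small relative to $n$) together with $n \ge s(C_\circ\log p + 1)$ so that $n-s \ge s\, C_\circ\log p$, I would get a lower bound of the form
\[
\wdt\lb^2 \ge \sg^2\log p \cdot \frac{(1-r)^2}{10 e(1+r)}\cdot\frac{C_\circ\, s}{s_0}\cdot\ell_\alpha\text{-type factor}.
\]
The defining equation (\ref{Ccirc}) of $C_\circ$, namely $\ell_\alpha(C_\circ) = 10e\frac{1+r}{(1-r)^2}\kappa^2$, is precisely engineered so that this factor simplifies to $\kappa^2$; here I must match the awkward term $\left(\frac{\sqrt{\pi(n-s)}}{p^\alpha}\right)^{4/(n-s)}$ against the $e^{-4\alpha/x}$ inside $\ell_\alpha$ by taking $x = (n-s)/\log p$ and checking $\sqrt{\pi(n-s)} \le$ a suitable power, which holds once $p \ge e^{8/\alpha}$. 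Taking square roots then gives (\ref{low}).

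The main obstacle I anticipate is the bookkeeping in the lower bound: correctly identifying which lower bound on $n$ (or equivalently $n-s$) to plug in so that the $\ell_\alpha$ evaluation is monotone in the right direction, and verifying that $\ell_\alpha$ is increasing on the relevant range so that $n-s \ge s\,C_\circ\log p$ translates into $\ell_\alpha((n-s)/\log p) \ge \ell_\alpha(s\,C_\circ)$ — actually one wants $\ell_\alpha(\text{something}) \ge$ the constant, exploiting that $\ell_\alpha$ is increasing and $C_\circ$ is defined as the point where equality holds. The condition $p\ge e^{8/\alpha}$ should be exactly what is needed to absorb the $\sqrt{\pi(n-s)}$ polynomial factor into the exponential, and the constants $20$ and $10e$ in (\ref{cvar}) and (\ref{Ccirc}) are chosen to make everything fit; so the real work is a careful but routine chain of inequalities rather than any conceptual difficulty.
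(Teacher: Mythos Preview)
Your approach is essentially the paper's, and the upper-bound argument is correct and identical to what the paper does.

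For the lower bound, however, your bookkeeping slips at the key step. You correctly upper-bound the denominator by $\frac{20(1+r)}{(1-r)^2}s_0$ and lower-bound the numerator by (\ref{minor}), but you then invoke $n-s\ge s\,C_\circ\log p$ from (\ref{lown}). Since the denominator carries $s_0$, this only yields $(n-s)/s_0 \ge (s/s_0)\,C_\circ\log p$, and after applying the defining relation (\ref{Ccirc}) you end up with $\wdt\lb^2 \ge \kappa^2\sg^2\log p\cdot\frac{s}{s_0}$, which is too weak whenever $s<s_0$. The paper instead uses
\[
\frac{n-s}{s_0}\ \ge\ \frac{n-s_0}{s_0}\ \ge\ C_\circ\log p
\]
(i.e.\ (\ref{lown}) is effectively read with $s_0$ in place of $s$), which removes the unwanted $s/s_0$ factor and makes the constants in (\ref{Ccirc}) close exactly.

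Your anticipated difficulties, by contrast, are lighter than you fear. The factor $(\pi(n-s))^{2/(n-s)}$ is simply $\ge 1$ because $\pi(n-s)\ge 1$; no appeal to $p\ge e^{8/\alpha}$ is needed at this point. Nor is monotonicity of $\ell_\alpha$ required: once $\frac{n-s}{s_0}\ge C_\circ\log p$ and (using $s_0\ge 1$) $\frac{\log p}{n-s}\le \frac{1}{C_\circ}$, one has $p^{-4\alpha/(n-s)}=e^{-4\alpha\log p/(n-s)}\ge e^{-4\alpha/C_\circ}$, and then one substitutes $C_\circ e^{-4\alpha/C_\circ}=\ell_\alpha(C_\circ)$ directly from (\ref{Ccirc}).
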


\begin{proof}
Recall that $0\le s \le s_0$. 
From (\ref{cvar}), we have
\bean
C_{{\rm var}}  \ \le \ \frac12 \frac{(1-r)^2}{1+r} \frac{n}{s_0 \ \log p}.
\eean
We then obtain, by virtue of (\ref{lambtilde1}) and the upper bound in (\ref{den}), 
\bean
\wdt{\lambda}^2 & \le &  \frac{\|\gP_{V_T^\perp}z\|_2^2 }{2s_0 \frac{1+r}{(1-r)^2}- \|X_T  \left(X_T^tX_T\right)^{-1} 
\sgn(\beta_{T}) \|_2^2} \\
 & \le &  \frac{\|\gP_{V_T^\perp}z\|_2^2 }{2s_0 \frac{1+r}{(1-r)^2}- s_0 \frac{1+r}{(1-r)^2}}.
\eean
Using the bound (\ref{major}), we deduce (\ref{up}).\\

On the other hand, the bound (\ref{minor}) and 
\bean
\frac{n}{C_{{\rm var}}\log p} & \le & 20 \frac{1+r}{(1-r)^2} s_0,
\eean
yield
\bean
\label{cond_low_Clb}
\wdt{\lambda}^2 & \ge & \frac{2(n-s)}{e} \left(\frac{\pi (n-s)}{p^{2\alpha}} \right)^{2/(n-s)}
\frac{\sg^2}{20 \frac{1+r}{(1-r)^2}s_0}.
\eean
From (\ref{lown}), we know that $n$ verifies
\bea\label{n-s}
\frac{n-s}{s_0} \ge \frac{n-s_0}{s_0} \ge C_\circ \log p.
\eea
Thus, noting that $(\pi(n-s))^{2/(n-s)}\ge 1$, 
\bean
\wdt{\lambda}^2 & \ge &  \frac{(1-r)^2}{10e(1+r)} p^{-4\alpha/(n-s)} C_\circ \sg^2 \log p.
\eean
Writing $p^{-4\alpha/(n-s)}=e^{-4\alpha \log p/(n-s)}$ and, using (\ref{n-s}) again,
\bean
\frac{\log p}{n-s} \le \frac{\log p}{n-s_0}  \le  \frac{1}{s_0\ C_\circ} \le \frac{1}{C_\circ},
\eean
we obtain
\bean
p^{-4\alpha/(n-s)} & \ge  & e^{-4\alpha/ C_\circ}.
\eean
Therefore, 
\bea
\wdt{\lambda}^2 & \ge & \ \frac{(1-r)^2}{10e(1+r)} e^{-4\alpha/ C_\circ} C_\circ \sg^2 \log p.
\eea
Let us recall that the constant $C_\circ$ has been precisely chosen to satisfy
\bean
\ell_\alpha(C_\circ) = C_\circ e^{-4\alpha/ C_\circ} = 10 e\ \frac{1+r}{(1-r)^2}  \kappa^2.
\eean
As a conclusion, we have just proved (\ref{low}).
\end{proof}

\subsection{Cand\`es and Plan's conditions}
\label{CPcond}

To obtain the exact recovery of the support and sign patterns of $\beta$, we will need similar bounds as the ones 
in \cite[Section 3.5]{CandesPlan:AnnStat09}. Namely,
\begin{enumerate}
\label{assump}
\item[(i)]  $\|(X_T^t X_T)^{-1}X_T^t z\|_{\infty}\le \kappa \ \sigma \sqrt{\log p}$
\smallskip
\item[(ii)]  $\|(X_T^t X_T)^{-1}\sgn(\beta_T)\|_{\infty}\le 3$
\smallskip
\item[(iii)]  $\|X_{T^c}^t X_T (X_T^t X_T)^{-1}\sgn(\beta_T)\|_{\infty}\le \frac14$
\smallskip
\item[(iv)]  $\|X_{T^c}^t \left(\Id -X_T (X_T^t X_T)^{-1}X_T^t \right) z\|_{\infty}\le \kappa
\ \sg \sqrt{\log p}$
\smallskip
\item[(v)]  $\|X_T^tX_T-\Id_s \| \le r$.
\end{enumerate}

When $r=\frac12$, these conditions were proven to hold with high probability in \cite{CandesPlan:AnnStat09} based on previous results due to Tropp \cite{Tropp:CRAS08}. 
Most of the proofs that these conditions hold with high probability are the same as in \cite{CandesPlan:AnnStat09} up to some slight improvements of the constants. \\

\begin{prop}
\label{linfs}
The bounds (i-iv) hold with probability at least $1-10/p^\alpha$. Condition (v) holds  with probability at least $1-216/p^\alpha$.
\end{prop}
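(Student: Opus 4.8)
The plan is to treat each of the five conditions (i)--(v) separately, since (v) is exactly the content of Theorem \ref{main1} (giving the failure probability $216/p^\alpha$) and has already been invoked to justify working on the event where (\ref{b1}) and (\ref{b2}) hold. So I restrict attention to (i)--(iv), aiming for a total failure probability of $10/p^\alpha$ there: the natural budget is $2/p^\alpha$ for each of the Gaussian-type bounds (i) and (iv), and something like $3/p^\alpha$ each (or a combined budget) for the deterministic-looking bounds (ii) and (iii), which are actually probabilistic because $T$ is random. Throughout I condition on $T$ (uniformly random of size $s$) and on the sign pattern $\sgn(\beta_T)$ (uniform on $\{-1,+1\}^s$, independent of $T$), and I work inside the high-probability event $\mathcal E$ where $\|X_T^tX_T - \Id_s\|\le r$, so that (\ref{b1})--(\ref{b2}) are available.

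For (i): conditionally on $T$, the vector $(X_T^tX_T)^{-1}X_T^t z$ is Gaussian with covariance $\sigma^2 (X_T^tX_T)^{-1}$, so each of its $s$ coordinates is centered Gaussian with variance at most $\sigma^2/(1-r)\le 2\sigma^2$. A union bound over the $s\le p$ coordinates and the standard Gaussian tail $\bP(|N(0,v^2)|\ge t)\le 2e^{-t^2/2v^2}$ gives the bound $\kappa\sigma\sqrt{\log p}$ with $\kappa = 4\sqrt{1+\alpha}$ chosen large enough that the resulting probability is $\le 2/p^\alpha$ (here the constant $8/\alpha$ in $p\ge e^{8/\alpha}$ and the definition $\kappa^2 = 16(1+\alpha)$ are what make the arithmetic close). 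For (iv): the matrix $\Id - X_T(X_T^tX_T)^{-1}X_T^t = \gP_{V_T^\perp}$ is the orthogonal projection onto $V_T^\perp$, so $X_{T^c}^t\gP_{V_T^\perp}z$ has, for each $j\in T^c$, a coordinate equal to $\langle \gP_{V_T^\perp} X_j, z\rangle$, a centered Gaussian with variance $\sigma^2\|\gP_{V_T^\perp}X_j\|_2^2\le \sigma^2$ (columns are unit norm). Again a union bound over the at most $p$ indices $j\in T^c$ and the Gaussian tail yields the bound $\kappa\sigma\sqrt{\log p}$ with failure probability $\le 2/p^\alpha$, by the same choice of $\kappa$. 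These two are essentially the arguments of \cite[Sec.~3.5]{CandesPlan:AnnStat09}, reproduced with the sharper constants.

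For (ii) and (iii): here one uses that $(X_T^tX_T)^{-1} = \Id_s + \sum_{k\ge 1}(\Id_s - X_T^tX_T)^k$ (Neumann series, valid on $\mathcal E$), so $(X_T^tX_T)^{-1}\sgn(\beta_T) = \sgn(\beta_T) + w$ with $\|w\|_2\le \|(X_T^tX_T)^{-1} - \Id_s\|\,\sqrt s \le \frac{r}{1-r}\sqrt s$; since the entries of $\sgn(\beta_T)$ are $\pm 1$, to control $\|\cdot\|_\infty$ one must bound $\|w\|_\infty$, and here the randomness of $\sgn(\beta_T)$ enters via a Hoeffding / Khintchine-type concentration on each coordinate, exactly as in the proof of \cite[Lemma~3.5]{CandesPlan:AnnStat09} (this is where their coherence condition $\mu(X)\le C_\mu/\log p$ and sparsity bound $s\le s_0$ are spent). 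For (iii) one writes $X_{T^c}^tX_T(X_T^tX_T)^{-1}\sgn(\beta_T)$, expands $(X_T^tX_T)^{-1}$ the same way, notes $X_{T^c}^tX_T$ has entries bounded by $\mu(X)$, and again invokes the concentration over the random signs together with a union bound over $j\in T^c$; the numerical constants $C_{\rm spar}$ and $C_\mu$ are precisely calibrated (as in the comparison with (\ref{C_cp})) so that the right-hand sides come out $\le 3$ and $\le \tfrac14$ respectively, with total failure probability absorbed into the $10/p^\alpha$ budget. The main obstacle is purely bookkeeping: making sure the per-event probabilities, with the generalized $\alpha$ (rather than the fixed $\alpha = 2\log 2$ of \cite{CandesPlan:AnnStat09}) and the improved quasi-isometry constants from Theorem \ref{main1}, still sum to exactly $10/p^\alpha$ for (i)--(iv) and $216/p^\alpha$ for (v); the probabilistic ideas are all already in \cite{CandesPlan:AnnStat09} and \cite{Tropp:CRAS08}, and the only genuinely delicate point is the concentration step for (ii)--(iii) where one cannot merely use $\|w\|_2$ but must get the $\ell_\infty$ control from the randomness of the sign pattern.
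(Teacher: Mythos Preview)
Your treatment of (i), (iv), and (v) is correct and matches the paper essentially line for line (including the probability budget of $2/p^\alpha$ each for (i) and (iv), and the direct appeal to Theorem \ref{main1} for (v)).

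For (ii) and (iii), however, the specific mechanisms you sketch do not give the required order, and the paper's argument is organized differently. Your Neumann series yields only the operator-norm bound $\|(X_T^tX_T)^{-1}-\Id\|\le r/(1-r)$, hence $\|W_i\|_2\le r/(1-r)=O(1)$ for each row $W_i$ of $(X_T^tX_T)^{-1}-\Id$; with that, Hoeffding on the random signs gives only a constant tail, not $p^{-\alpha}$. Similarly, for (iii) your entrywise bound $|X_j^tX_i|\le\mu(X)$ gives $\|X_T^tX_j\|_2\le \sqrt{s}\,\mu(X)$, which under the assumptions is not of order $1/\sqrt{\log p}$. What is actually needed in both cases is that the \emph{column} norms of $X_T^tX_T-\Id$ (equivalently of $RH$) are $O(1/\sqrt{\log p})$: the paper introduces the auxiliary event
\[
B=\bigl\{\|RH\|_{1\to 2}\le c/\sqrt{\log p}\bigr\},
\]
bounds $\bP(B^c)\le 1/p^\alpha$ via its own Lemma~\ref{norm12} (from \cite{cd_inv}, replacing \cite[Lemmas~3.5--3.6]{CandesPlan:AnnStat09}), and then on $E\cap B$ obtains $\|W_i\|_2\le \|RH\|_{1\to 2}/(1-r)\le c/((1-r)\sqrt{\log p})$ for (ii) and the analogous bound for (iii). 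With this, Hoeffding over the random signs gives the claimed thresholds $3$ and $1/4$ with failure probability $3/p^\alpha$ each, completing the $2+3+3+2=10$ budget. So your outline is right, but the operative tool is the $\|RH\|_{1\to 2}$ control, not the operator norm or the entrywise coherence bound you mention; this is also precisely where the paper improves the constants over \cite{CandesPlan:AnnStat09} and obtains arbitrary $\alpha$.
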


\begin{proof}
See Section \ref{cp-cond} in the Appendix. 
\end{proof}  

\subsection{Last step of the proof}

We now conclude the proof using the strategy announced in the beginning of this section:
\begin{enumerate}
\item[(i)] We prove that the proxies $\wdt{\beta}$ and $\wdt{\lb}$ satisfy
the optimality conditions (\ref{cond1}) and (\ref{cond2}), from which we deduce that $\ch\beta =\wdt \beta$ and $\ch \lb=\wdt \lb$.
\item[(ii)] Since the proxy $\wdt \beta$ has the right support and sign patterns, we conclude that $\ch \beta$ exactly recovers these features as well.
\end{enumerate}

\subsubsection{$\wdt{\beta}$ and $\beta$ have the same support and sign pattern} \label{stratAbeta}

First, it is clear that $\wdt{\beta}$ and $\beta$ have the same support. Next,
we must prove that $\wdt{\beta}$ has the same sign pattern as $\beta$.
Use Proposition \ref{linfs} to obtain
\bean
\|\wdt{\beta}_T-\beta_T\|_{\infty} & \le &
\|(X_T^t X_T)^{-1}X_T^tz\|_{\infty}+\wdt \lb \ \|(X_T^t X_T)^{-1}\sgn(\beta_T)\|_{\infty} \\
& \le & \kappa \ \sigma \sqrt{\log p}+ 3 \wdt \lb.
\eean
Using the lower bound (\ref{low}), 
and the expression of $\kappa$, we obtain
\bea
\|\wdt{\beta}_T-\beta_T\|_{\infty} & \le & 4 \wdt \lb.
\eea
A sufficient condition to guarantee that the sign pattern is recovered is that this last upper bound 
be lower than the minimum absolute value of non-zero components of $\beta$, i.e.
\bea
\label{nonintersect}
4 \wdt \lb & \le & \min_{j\in T} |\beta_j|.
\eea
Using the upper bound on $\wdt \lb$ in (\ref{up}), 
this is achieved in particular when
\bean
4 \sg \ \frac{\sqrt{n-s}+\sqrt{2 \alpha \ \log(p)}}{\sqrt{s_0}} \frac{1-r}{\sqrt{1+r}} & \le & \min_{j\in T} |\beta_j|,
\eean
which is implied by Assumption \ref{ass3low}.

\subsubsection{$\wdt{\beta}$ and $\wdt\lb$ satisfy the optimality conditions}
\label{oracleoptlow}
Using the lower bound (\ref{low}) on $\wdt \lb$, the proof of the fact
$\wdt{\beta}$ and $\wdt\lb$ satisfy the optimality conditions is exactly
the same as in \cite[Section 3.5]{CandesPlan:AnnStat09}. We repeat the argument for the
sake of completeness. On one hand, by construction, we clearly have
\bean
X_T^t(y-X\wdt{\beta}) & = & -\wdt \lb \ \sgn(\beta_T).
\eean
Since $\wdt{\beta}$ and $\beta$ have the same sign pattern, we actually have:
\bean
X_T^t(y-X\wdt{\beta}) & = & -\wdt \lb \ \sgn(\wdt \beta_T).
\eean
On the other hand,
\bea
\|X_{T^c}^t(y-X\wdt{\beta})\|_\infty & = & \|X_{T^c}^t\gP_{V^\perp}(z)
+\wdt \lb \ X_{T^c}^t X_T (X_T^t X_T)^{-1}\sgn(\beta_T) \|_\infty \nonumber \\
& \le & \|X_{T^c}^t\gP_{V^\perp}(z)\|_\infty
+\wdt \lb \ \|X_{T^c}^t X_T (X_T^t X_T)^{-1}\sgn(\beta_T) \|_\infty \nonumber \\
& \le & \kappa \ \sg\sqrt{\log p} +  \frac14 \wdt \lb \label{kap} \\
& \le & \  \frac{3}{4} \ \wdt  \lb \ <  \ \wdt \lb. \nonumber 
\eea

Hence, the two parts of the subgradient conditions (\ref{cond1}-\ref{cond2}) are satisfied by $\wdt{\beta}$ and $\wdt\lb$, which means that 
\bea \label{cool}
\wdt \beta & = & \ch \beta_{\wdt \lb}.
\eea
In other words, $\wdt \beta$ corresponds to the solution of problem (\ref{lass}) with the penalization $\lb=\wdt \lb$. Moreover, $\wdt \lb$ bas been determined so that it verifies (\ref{condlb})
\bean
\wdt\lambda^2 & = &  C_{{\rm var}} \ \frac{\|y-X_T \wdt{\beta}_T\|_2^2}{n} \ \log p,
\eean
i.e., plugging (\ref{cool}),
\bean
\wdt\lambda^2 & = &  C_{{\rm var}} \ \frac{\|y-X_T (\ch \beta_{\wdt \lb})_T\|_2^2}{n} \ \log p.
\eean
Therefore, $\wdt \lb$ is a solution of Eq. (\ref{eqA}). By virtue of uniqueness proved in Appendix \ref{sec-gammaA},
we deduce that 
\bean
\ch{\beta} & = & \wdt{\beta}\\
\ch{\lambda} & = & \wdt\lb.
\eean

\subsubsection{Conclusion of the proof}  \label{concl-theo1}
The two preceding sub-sections prove that $\ch{\beta}$ has same support and sign pattern as
$\beta$. This occurs when (\ref{b1}) and (\ref{b2}) (both implied by the invertibility condition (v) in Sec. \ref{CPcond}), Cand\`es and Plan's conditions (i-iv) in Sec. \ref{CPcond} and 
the bound on $\| \gP_{V_T^\perp}z\|_2$ in Sec. \ref{VTperp} are satisfied simultaneously. 
Therefore, this occurs with probability at least
\bean
1-\frac{216 + 10 + 2}{p^{\alpha}},
\eean
as announced.

\section{Proof of Theorem \ref{main3}}
\label{Proofmain3}

As in the proof of Theorem \ref{main2}, the quasi-isometry property (\ref{b1}) and (\ref{b2}), and Cand\`es and Plan's conditions
of Section \ref{CPcond} will be assumed. Notice also that 
the results of Section \ref{optlamb} are still valid with the assumption of Theorem \ref{main3}. 

\subsection{The oracle estimator}

%Recall that we have $\frac12 \|Y-X_{\ch{T}}\ch{\beta}\|^2=C\ch{\lambda} \|\ch{\beta}_{\ch{T}}\|_1$. Moreover,
%$\|\ch{\beta}\|_1=\sgn\left(\ch{\beta}_{\ch{T}}\right)^t \ch{\beta}_{\ch{T}}$. Using these equations,
%we would like to obtain the order of magnitude of the tendency of $\ch{\lambda}$.

As in the case of Section \ref{oraclesig}, the oracle for $\beta$ is given by 
\bea
\label{tildebet2}
\wdt{\beta}_{T}  & = & \left(X_T^tX_T\right)^{-1}\left(X_T^t y - \wdt{\lambda} \ \sgn(\beta_{T}) \right).
\eea

We now seek $\wdt \lb$ verifying
\bea \label{condlb2}
\frac12 \|y-X_T \wdt{\beta}_T\|_2^2 & = & C \wdt\lambda\ \sgn(\beta_{T})^t \wdt{\beta}_T.
\eea
Replacing $\wdt{\beta}$ by its value (\ref{tildebet}), we obtain
\bean
& \d \frac12 &\|y-X_T  \left(X_T^tX_T\right)^{-1} \left(X_T^ty -\wdt{\lambda} \ \sgn(\beta_{T}) \right) \|_2^2 \\
& & \quad = \ C\  \wdt{\lambda} \ \sgn(\beta_{T})^t \left(\left(X_T^tX_T\right)^{-1} 
\left(X_T^ty -\wdt{\lambda} \ \sgn(\beta_{T}) \right)\right).
\eean
Thus,
\bean
\frac12\  \| \gP_{V_T^\perp}y + \wdt{\lambda}  X_T  \left(X_T^tX_T\right)^{-1}\sgn(\beta_{T}) \|_2^2
\  = \hspace{2cm}  \\
 - C \wdt{\lambda}^2 \la \sgn(\beta_{T}),\left(X_T^tX_T\right)^{-1}\sgn(\beta_{T}) \ra 
+C \wdt{\lambda} \ \sgn(\beta_{T})^t \left(X_T^tX_T\right)^{-1}X_T^ty.
\eean
Using the orthogonality relations, we then obtain
\bean
\frac12 \|\gP_{V_T^\perp}y\|_2^2  +\frac{\wdt{\lambda}^2}{2} \|X_T  \left(X_T^tX_T\right)^{-1} \sgn(\beta_{T}) \|_2^2  =  C \wdt{\lambda} \ \sgn(\beta_{T})^t \left(X_T^tX_T\right)^{-1}X_T^ty\\
 \hspace{.3cm} - C\: \wdt{\lambda}^2 \la \sgn(\beta_{T}),\left(X_T^tX_T\right)^{-1}\sgn(\beta_{T}) \ra,
\eean
which is equivalent to
\bea
\label{quad} \left(\frac{1}{2}  +C\right) \wdt{\lambda}^2 \|\left(X_T^tX_T\right)^{-\frac12} \sgn(\beta_{T}) \|_2^2 - C \wdt{\lambda} \ \sgn(\beta_{T})^t \left(X_T^tX_T\right)^{-1}X_T^ty+\frac12 \|\gP_{V_T^\perp}z\|_2^2 = 0. 
\eea
The roots of the quadratic equation are
\bea
\label{lambtilde}
\wdt{\lambda} & = &
\frac{C\: \sgn(\beta_{T})^t \left(X_T^tX_T\right)^{-1}X_T^ty\pm \sqrt{\Delta} }{(1+2C) \|\left(X_T^tX_T\right)^{-\frac12} \sgn(\beta_{T}) \|_2^2},
\eea
where
\bean
\Delta & = &\left(C \: \sgn(\beta_{T})^t \left(X_T^tX_T\right)^{-1}X_T^ty\right)^2 \\
& & - (1+2C) \|\left(X_T^tX_T\right)^{-\frac12} \sgn(\beta_{T}) \|_2^2\|\gP_{V_T^\perp}z\|_2^2.
\eean

\subsection{Study of the oracle $\tilde{\lb}$}
\label{equalC}
Following the same strategy as for Strategy (A), we now provide a confidence interval for $\tilde{\lb}$. 

\subsubsection{Premilinaries}
We have
\bean
\sgn(\beta_{T})^t \left(X_T^tX_T\right)^{-1}X_T^t y & = & \sgn(\beta_{T})^t \left(X_T^tX_T\right)^{-1}X_T^t 
(X_T\beta+z) \\
& = & \sgn(\beta_{T})^t\beta+\sgn(\beta_{T})^t \left(X_T^tX_T\right)^{-1}X_T^t z \\
& = & \|\beta\|_1 +\langle X_T \left(X_T^tX_T\right)^{-1}\sgn(\beta_{T}),\gP_{V_T}z+\gP_{V_T^\perp}z\rangle.
\eean
Hence,
\beq \label{delta1}
\sgn (\beta_{T})^t  \left(X_T^tX_T\right)^{-1} X_T^t y \\
 =  \|\beta\|_1+\langle X_T \left(X_T^tX_T\right)^{-1}\sgn(\beta_{T}), \gP_{V_T}z\rangle.
\eeq
Note that the Cauchy-Schwarz inequality yields
\beq \label{cs1}
\left| \langle  X_T \left(X_T^tX_T\right)^{-1}\sgn(\beta_{T}) ,\gP_{V_T}z\rangle \right| 
\le \|\left(X_T^tX_T\right)^{-\frac12}\sgn(\beta_{T})\|_2 \| \gP_{V_T}z\|_2.
\eeq

\subsubsection{Bound on $\| \gP_{V_T}z\|_2$}
\label{VT}
Using some well known properties of the $\chi^2$ distribution recalled in Lemma \ref{cki} in the Appendix, 
we obtain 
\bea
\label{t}
\bP\left(\|\gP_{V_T}(z)\|_2/\sigma \ge \sqrt{s}+ \sqrt{2t} \right) &\le & \exp(-t).
\eea
Tune $t$ such that $e^{-t}=2 p^{-\alpha}$, i.e.
\bean
t & = & \log(p^{\alpha}/2).
\eean
Hence,
\bea
\label{tprime}
\bP\left(\|\gP_{V_T}(z)\|_2/\sigma \ge \sqrt{s}+ \sqrt{2\log(p^{\alpha}/2)} \right) &\le & p^{-\alpha}. 
\eea

\subsubsection{Positivity of $\Delta$}\

We begin with the study of $\sgn(\beta_{T})^t \left(X_T^tX_T\right)^{-1}X_T^ty$ and $\|\left(X_T^tX_T\right)^{-\frac12} \sgn(\beta_{T}) \|_2^2\|\gP_{V_T^\perp}z\|_2^2$, two key quantities in the analysis.\\

We first study $\sgn(\beta_{T})^t \left(X_T^tX_T\right)^{-1}X_T^ty$. By (\ref{b1}), we have 
\bea \label{xxg}
\|\left(X_T^tX_T\right)^{-\frac12}\sgn(\beta_{T})\|_2 & \le & \sqrt{\frac{s}{1-r}}.
\eea
Thus, using (\ref{tprime}), (\ref{cs1}) and the lower bound (\ref{minbetL}) from Assumption \ref{ass3low}  on the non-zero components of $\beta$, we can write
\bean
\left| \langle X_T \left(X_T^tX_T\right)^{-1}\sgn(\beta_{T}) ,\gP_{V_T}z\rangle \right| & \le & \sg \frac{\sqrt{s}}{\sqrt{1-r}} \left( \sqrt{s}+\sqrt{2\alpha \log p} \right) \\
  & \le & \frac12 \ \|\beta\|_{1}.
\eean

Therefore, from (\ref{delta1}) we deduce that
\bea
\label{downup}
\frac12 \|\beta\|_1
\le
\ \sgn(\beta_{T})^t \left(X_T^tX_T\right)^{-1}X_T^ty
\le
\frac32 \|\beta\|_1.\\
\nonumber
\eea

Second, we study $\|\left(X_T^tX_T\right)^{-\frac12} \sgn(\beta_{T}) \|_2^2
\|\gP_{V_T^\perp}z\|_2^2$. We have
\bean
\|\left(X_T^tX_T\right)^{-\frac12} \sgn(\beta_{T}) \|_2
\|\gP_{V_T^\perp}z\|_2  
&\le &\sg  \sqrt{\frac{s}{1-r}} \left(\sqrt{n- s}+\sqrt{2\alpha \log p} \right).
\eean

Thus 
\bea
\Delta & \ge & \frac{C^2}{4} \|\beta\|_1^2 - \sg^2 (1+2C)  \frac{s}{1-r} \left(\sqrt{n- s}+\sqrt{2\alpha \log p} \right)^2 \\
& \ge & \frac{C^2}{4} s^2 \min_{1\le j\le p}|\beta_j|^2 - \sg^2 (1+2C)  \frac{s}{1-r} \left(\sqrt{n- s}+\sqrt{2\alpha \log p} \right)^2
\eea
and Assumption \ref{ass3low} shows that $\Delta>0$, which ensures that  $\wdt{\lb}$ is well defined.\\

\subsubsection{Bounds on $\tilde{\lb}$}
First, let us write
\bean
\sqrt{\Delta} =\left(C \: \sgn(\beta_{T})^t \left(X_T^tX_T\right)^{-1}X_T^ty\right) \hspace{4cm} \\
\times \sqrt{1-\frac{(1+2C)\ \|\left(X_T^tX_T\right)^{-\frac12} \sgn(\beta_{T}) \|_2^2
\|\gP_{V_T^\perp}z\|_2^2}{\left(C \: \sgn(\beta_{T})^t \left(X_T^tX_T\right)^{-1}X_T^ty\right)^2}}.
\eean
On one hand, due to $\sqrt{1-\delta}\le 1-\frac{\delta}{2}$ on $(0,1)$,
we obtain
\bean
\sqrt{\Delta} & \le &
\left(C \: \sgn(\beta_{T})^t \left(X_T^tX_T\right)^{-1}X_T^ty\right) \\
& & -\frac{(1+2C) \|\left(X_T^tX_T\right)^{-\frac12} \sgn(\beta_{T}) \|_2^2
\|\gP_{V_T^\perp}z\|_2^2}{ 2C \: \sgn(\beta_{T})^t \left(X_T^tX_T\right)^{-1}X_T^ty}.
\eean
Combining this last equation with (\ref{lambtilde}), we obtain that
\bea
\wdt{\lambda} & \ge &
\frac{\|\gP_{V_T^\perp}z\|_2^2}
{2C\: \sgn (\beta_{T})^t \left(X_T^tX_T\right)^{-1}X_T^ty}.
\eea
On the other hand, we also have $\sqrt{1-\delta}\ge 1- \delta$ on $(0,1)$.
Thus we can write
\bean
\sqrt{\Delta}
& \ge & \left(C \: \sgn(\beta_{T})^t \left(X_T^tX_T\right)^{-1}X_T^ty\right) \\
& & -\frac{(1+2C)  \|\left(X_T^tX_T\right)^{-\frac12} \sgn(\beta_{T}) \|_2^2
\|\gP_{V_T^\perp}z\|_2^2}{C \: \sgn(\beta_{T})^t \left(X_T^tX_T\right)^{-1}X_T^ty}
\eean
and combining this last equation with (\ref{lambtilde}) and the previous upper bound, we thus obtain
\bean
\wdt{\lambda} & \le &
\frac{\|\gP_{V_T^\perp}z\|_2^2}
{ C\: \sgn(\beta_{T})^t \left(X_T^tX_T\right)^{-1}X_T^ty}.
\eean
Using (\ref{downup}), we finally get
\bea \label{boundC<1/2} 
\frac{\|\gP_{V_T^\perp}z\|_2^2}
{3\ C \: \|\beta\|_1} & \le \
\wdt{\lambda}\  \le &
2 \frac{\|\gP_{V_T^\perp}z\|_2^2}
{C \: \|\beta\|_1}.\\
 & & \nonumber
\eea
Combining this last equation with (\ref{minor}), we obtain:
\bea
\label{sandwC}
\sigma^2  \frac{(n-s) \left(\frac{\sqrt{\pi (n-s)}}{p^{\alpha}} \right)^{\frac{4}{n-s}}
}{3\ C\ 
\|\beta\|_1}\: \le \ \wdt \lb \ \le \
2\ \sigma^2\ \frac{\left(\sqrt{n-s}+\sqrt{2\alpha \log p}\right)^2}{C \|\beta\|_1}.
\eea
Using Assumption \ref{ass3low} and (\ref{M}), we thus obtain
\bea
\label{lowC}
\wdt \lb  & \ge & \kappa \ \sg \sqrt{\log p}.
\eea

\subsection{Last step of the proof}
\subsubsection{$\wdt{\beta}$ and $\beta$ have the same support and sign pattern}
As in the case of Strategy (A) it is clear that $\tilde{\beta}$ and $\beta$ 
have the same support. Let us now verify that they have the same sign pattern. 

As in Section \ref{stratAbeta} and based on (\ref{lowC}), we obtain
\bean
\|\wdt{\beta}_T-\beta_T\|_{\infty} & \le & 4 \wdt \lb,
\eean
exactly as for Strategy (A).
Using the upper bound on $\wdt \lb$ in the right hand side of (\ref{sandwC}), 
we thus need 
\bean
8\  \frac{\left(\sqrt{n-s}+\sqrt{2\alpha \log p}\right)^2}
{C}& \le & \min_{j\in T} |\beta_j| \ \frac{\|\beta\|_1}{\sigma^2}
\eean
to garantee that $\wdt{\beta}_T$ and $\beta_T$ have the same sign pattern.
In view of this inequality, and since $\|\beta\|_1\ge s \min_{j\in T} |\beta_j|$, an even stronger sufficient condition is
\bean
8\  \frac{\left(\sqrt{n-s}+\sqrt{2\alpha \log p}\right)^2}{C\ s }& \le & 
\frac{\min_{j\in T} |\beta_j|^{2}}{\sigma^2}.
\eean
Noting that $\frac{2\sqrt{2}}{\sqrt{C}}\le 2\frac{\sqrt{1+2C}}{C\sqrt{1-r}}$, we conclude that this condition is also implied by Assumption \ref{ass3low}.

\subsubsection{$\wdt{\beta}$ and $\wdt\lb$ satisfy the optimality conditions}
The proof is exactly the same as in Section \ref{oracleoptlow} after replacing (\ref{low}) by (\ref{lowC}).

\subsubsection{Conclusion of the proof}
The two preceding sub-sections prove that $\ch{\beta}$ has same support and sign pattern as
$\beta$. This occurs under the same conditions as those mentioned in the conclusion of the proof of Theorem \ref{main2}, Sec. \ref{concl-theo1}, plus the bound on 
$\| \gP_{V_T}z\|_2$ in Sec. \ref{VT}. 
Hence, this occurs with probability at least
\bean
1-\frac{216 + 10 + 2+1}{p^{\alpha}},
\eean
as announced.

\subsection{Epilogue: Nonempty range for $\|\beta\|_1$}

We need to ensure that the range of admissible values for $\beta$ is sufficiently large. The intuition 
says that this can be achieved by allowing sufficiently large values of $n$. In other 
words, we would like to know the additional constraints on the various parameters ensuring \bean
s\ \mathsf{L}_{\alpha,r,C}^{n,s,p}  & < & \mathsf{M}_{\alpha,r,\theta,C}^{n,s,p}.
\eean 
It then suffices to know when the following inequalities are satisfied:
\bea 
\label{firstCn}
m_{\alpha,r,C} \ s\ \frac{\sqrt{n-s}+\sqrt{2 \alpha \log p}}{\sqrt{s}} & \le & 
\frac{n-s}{\sqrt{\log p}} \left(\frac{\sqrt{\pi (n-s)}}{p^{\alpha}} \right)^{\frac{4}{n-s}}
\eea  
where 
\bean
m_{\alpha,r,C} & = & 6 \kappa \frac{\sqrt{1+2C}}{\sqrt{1-r}}.
\eean 
First, notice that under the condition 
\bea
\label{Cn}
n-s & \ge & 8\alpha \ s\ \log p \ge 8\alpha \log p,
\eea
we have $ \log  \left(\frac{\sqrt{\pi (n-s)}}{p^{\alpha}} \right)^{\frac{4}{n-s}}  =   \frac4{(n-s)}\Big( \frac12 \left(\log (\pi) +\log(n-s)\right)
-\alpha \log p\Big)  \ge - \frac12 $, and then
\bean
e^{-1/2} & \le & \left(\frac{\sqrt{\pi (n-s)}}{p^{\alpha}} \right)^{\frac{4}{n-s}}.
\eean

Therefore, since we also have $\sqrt{2 \alpha \log p}  \le \sqrt{n-s}$,  (\ref{firstCn}) is fulfilled if
\bean
2 m_{\alpha,r,C} \sqrt{s} \ \sqrt{n-s} & \le & e^{-1/2}\frac{n-s}{\sqrt{\log p}},
\eean
i.e.
\bean
n-s & \ge & 4e\ m_{\alpha,r,C}^2 \ s\ \log p.
\eean
This explains the constraint (\ref{n-main3}) with the constant $c_\circ:=4e\ m_{\alpha,r,C}^2>8\alpha$.

\section{Algorithms and simulations results}
\label{Simuls}
In this section, we propose one iterative algorithm for Strategies (A) and (B) and we study their 
practical performance via Monte Carlo experiments.

We performed Monte Carlo experiments in the following setting. We took $p=600$,
$n=75$ and $s=9$ and we ran 500 experiments with $\sigma^2=1$ and the coefficients 
of $\beta$ were randomly drawn independently as $B$ times a Bernoulli $\pm 1$ random variable
plus an independent centered Gaussian perturbation with variance one. 

\subsection{Preliminaries}
Our algorithms will be well defined under the assumption that for each positive value of the 
relaxation parameter, the value $\ch{\beta}_\lb$ of the regression vector is unique and the 
trajectory of $\ch{\beta}_\lb$ is continuous and piecewise affine. This 
property is well known under various assumptions on the design matrix $X$. It is a basic prerequisite 
for the theory behind Least Angle Regression and Homotopy methods. We refer the reader to \cite{Osborne:IMANA00} 
and \cite{Efron:AnnStat04} for information on these problems. 
See also \cite{Dossal:HAL11} for a recent account on the study of $\ch{\beta}_\lb$ as a function of $\lb$ under generic conditions on the design matrix. 

The subgradient conditions for the LASSO imply that 
\bea 
X_{\ch{T}_\lb}^t(y-X_{\ch{T}_\lb}\ch{\beta}_{\ch{T}_\lb}) & = & \lb \ \sgn(\ch{\beta}_{\ch{T}_\lb}).
\eea 
where $X_{\ch{T}_\lb}$ is non-singular, and we obtain the well known fact that,  
for any $\lambda>0$ such that $\ch{\beta}_\lambda\neq 0$, 
\bea
\label{turtle}
\ch{\beta}_{\ch{T}_{\lb}} & = & (X_{\ch{T}_\lb}^tX_{\ch{T}_\lb})^{-1}\left(X_{\ch{T}_\lb}^ty-\lambda \ \sgn\left(\ch{\beta}_{\ch{T}_\lb}\right)\right).
\eea

The following result is straightforward but useful. 
\begin{lemm}{\bf (Nontriviality of the estimator)}
\label{triv}
Let $\Sigma$ be the set 
\beq
\Sigma = \left\{ (S,\delta);\ S\subset \{ 1,\ldots,p\},\ 
\delta\in \{-1,1\}^{|S|},\ |S|\le n,\ \sigma_{\min}(X_{S})>0 \right\}.
\eeq
The inequality
\bea
\inf_{(S,\delta)\in\Sigma} \left\|(X_S^tX_S)^{-1} (X_S^ty-\lambda \delta)\right\|_{1} & > & 0
\eea 
holds with probability one.
\end{lemm}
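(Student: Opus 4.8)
The plan is to exploit the fact that $\Sigma$ is a \emph{finite} set: there are at most $2^p$ candidate supports $S$ and, for each of them, at most $2^{|S|}$ sign vectors $\delta$, so $|\Sigma|<\infty$. Consequently the infimum in the statement is in fact a minimum of finitely many terms, and it suffices to show that each individual term $\|v_{S,\delta}\|_1$, with $v_{S,\delta}:=(X_S^tX_S)^{-1}(X_S^ty-\lambda\delta)\in\R^{|S|}$, is almost surely strictly positive; a finite union of null events is still null.

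So I would fix $(S,\delta)\in\Sigma$ and study the vector $v_{S,\delta}$. Since $(S,\delta)\in\Sigma$ forces $\sigma_{\min}(X_S)>0$, the matrix $X_S$ has full column rank and $X_S^tX_S$ is invertible, so $v_{S,\delta}=0$ if and only if $X_S^ty=\lambda\delta$. Writing $y=X\beta+z$, this is the event $\{X_S^tz=\lambda\delta-X_S^tX\beta\}$. I would then condition on $(\beta,T)$ — recall that in our model the support $T$ of $\beta$ and the signs of $\beta_T$ are themselves random. Conditionally, the right-hand side is a fixed vector $c\in\R^{|S|}$, whereas $X_S^tz$ is a centered Gaussian vector with covariance $\sigma^2 X_S^tX_S$, which is positive definite since $\sigma_{\min}(X_S)>0$ and $\sigma^2>0$. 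A nondegenerate Gaussian vector in $\R^{|S|}$ is absolutely continuous with respect to Lebesgue measure and hence puts zero mass on the singleton $\{c\}$; therefore $\bP(v_{S,\delta}=0\mid\beta,T)=0$, and taking expectations gives $\bP(v_{S,\delta}=0)=0$.

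To conclude, a union bound over the finitely many pairs $(S,\delta)\in\Sigma$ shows that, with probability one, $v_{S,\delta}\neq 0$ — equivalently $\|v_{S,\delta}\|_1>0$ — simultaneously for all $(S,\delta)\in\Sigma$. On this event of full probability, $\min_{(S,\delta)\in\Sigma}\|v_{S,\delta}\|_1$ is the minimum of finitely many strictly positive reals, hence strictly positive, which is the claim. There is essentially no hard step here; the only point requiring a little care is the conditioning argument, made necessary because the support and sign pattern of $\beta$ are random, together with the observation that the finiteness of $\Sigma$ turns the infimum into an attained, strictly positive minimum.
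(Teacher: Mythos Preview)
Your argument is correct and is precisely the fleshed-out version of the paper's one-line proof, which simply states that the claim is ``an immediate consequence of the Gaussian distribution of $z$.'' Your explicit use of the finiteness of $\Sigma$, the reduction of $v_{S,\delta}=0$ to a nondegenerate Gaussian hitting a point, and the union bound are exactly what the paper leaves implicit.
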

\begin{proof}
This is an immediate consequence of the Gaussian distribution of $z$.
\end{proof}

\subsection{The standard LASSO with known variance}

\subsubsection{Simulations results: high SNR}
\label{simlasso1}

With the choice $B=40$, in all of the 500 experiments, we found that the support was exactly recovered. 

\subsubsection{Simulations results: low SNR}
\label{simlasso2}

Figure \ref{sig}
below shows the histogram of the number of properly recovered components (left column) and 
the number of false components (right column) for the LASSO estimator with known variance 
and $\lambda=2\sigma \sqrt{2\log p}$. 

\begin{figure}[htb]
\label{sig}
\begin{center}
\includegraphics[width=8cm]{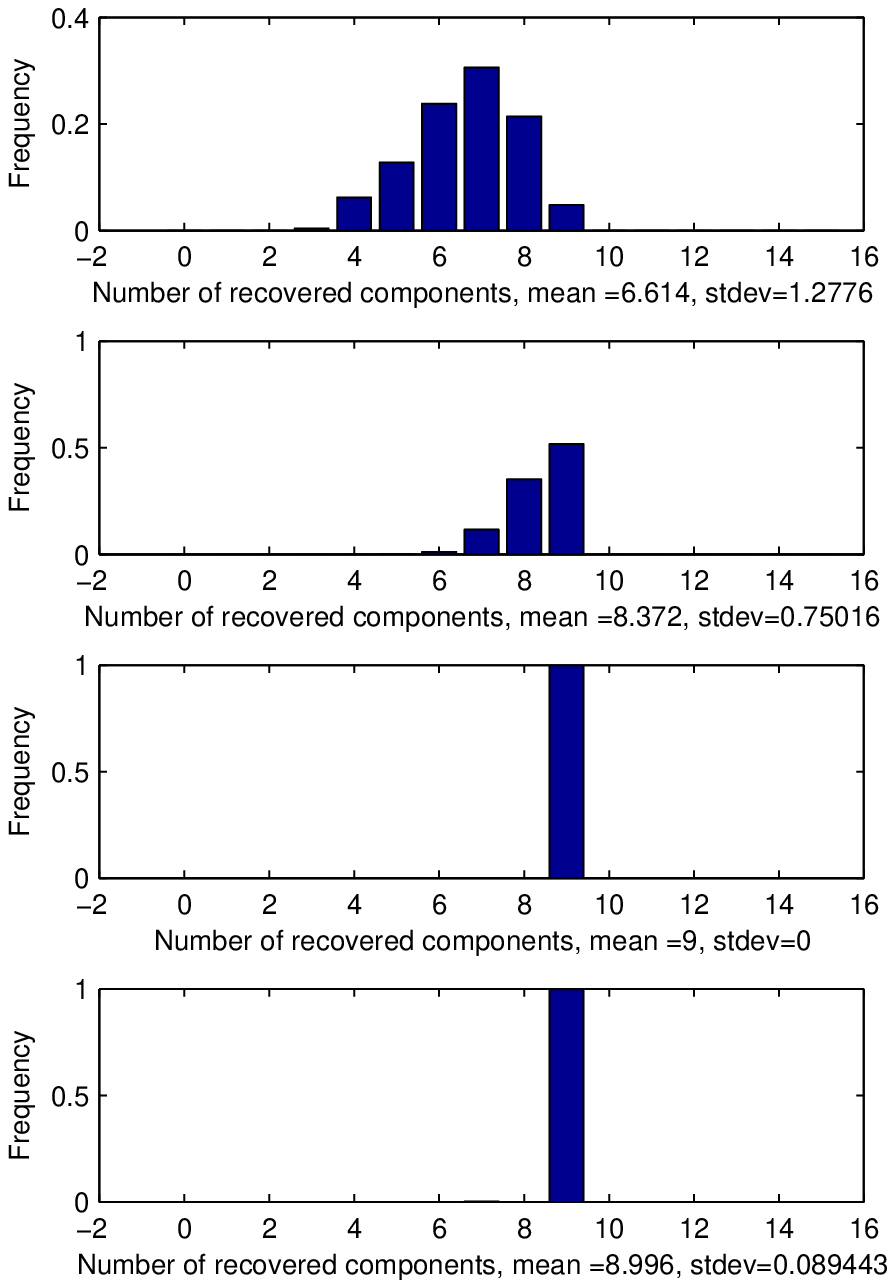}
\includegraphics[width=8cm]{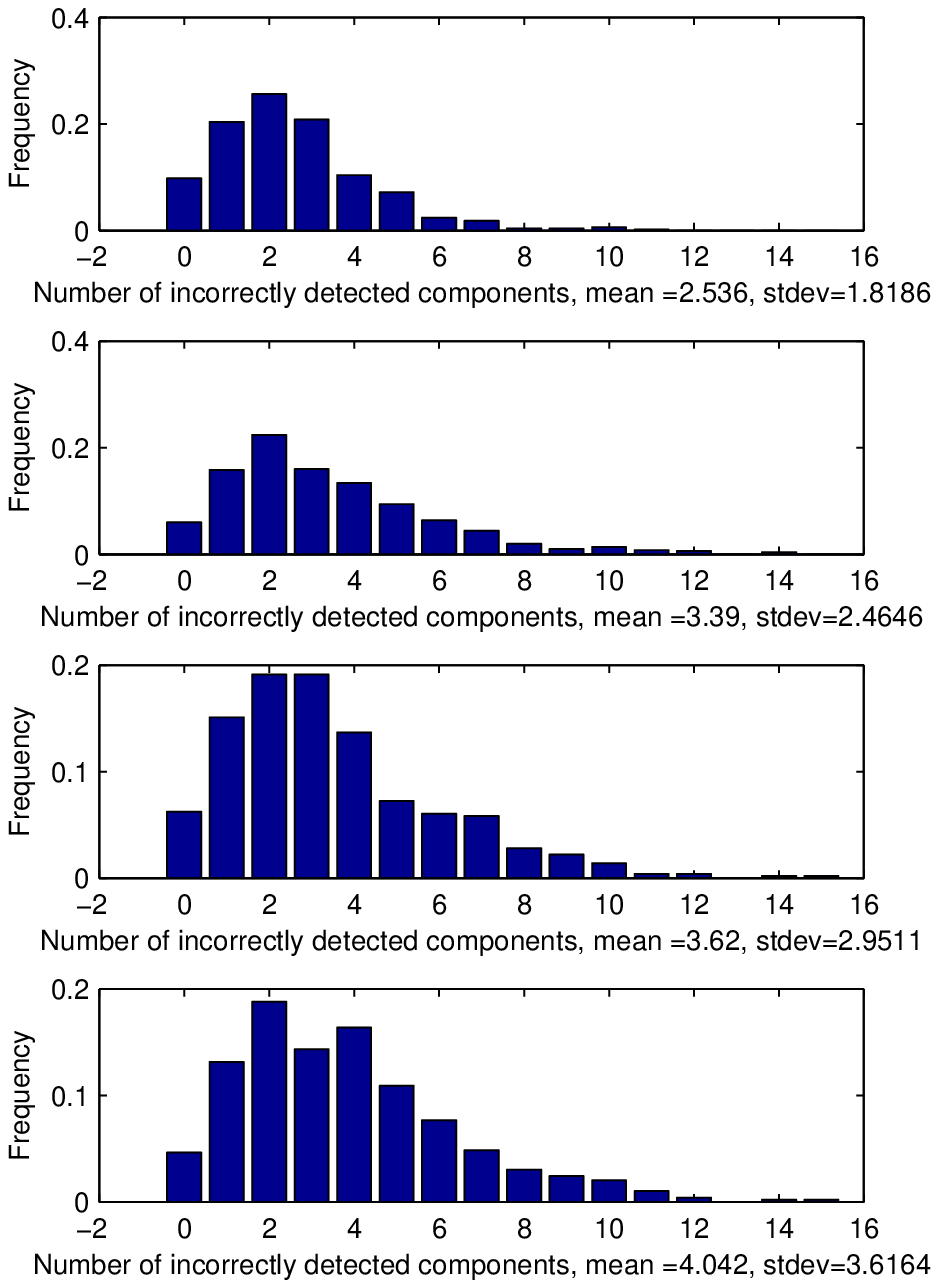}
\caption{Histogram of the number of properly recovered components (left column) and 
the number of false components (right column) for the LASSO estimator with known variance 
and $\lambda=2\sigma \sqrt{2\log p}$ for coeff. mean level $B=1,2,5,10$ (from top to bottom)}
\end{center}
\end{figure}

\subsection{Strategy (A)}

\subsubsection{The algorithm} \label{algoA}
As was discussed in Section \ref{defStratA}, finding the estimator $(\ch{\beta},\ch{\lb})$ in Strategy A is equivalent 
to solving the equation 
\bean 
\Gamma_A(\lb) & = & C_{{\rm var}}. 
\eean 
Since the function $\Gamma_A$ is increasing (see Appendix \ref{gammaA}), there is a number of Newton-type methods 
which can be used to solve this equation very efficiently and globally, i.e. without any condition on the 
initial iterate $\lb^{(0)}$; see e.g. \cite{Ralph:MOR94}. Instead of such a refined method, we propose
below a simpler fixed point iteration which was observed to work very well in practice. 

\vspace{.3cm}

\begin{algorithm}
\label{fixedpt}
\caption{Fixed point iterations for the LASSO with unknown variance}
\begin{algorithmic}
%%-----------------------------------------------
\STATE {\bf Input} $\lb^{(0)}$, $l=1$ and $\epsilon>0$         % demande le paq. amssymb
%% \ENSURE   % sortie de l'agorithme

\WHILE {$|\lb^{(l+1)}-\lb^{(l)}|\ge \epsilon$}
   \STATE Compute $\ch{\beta}_{\lb^{(l)}}$ as a solution of the LASSO problem
\bea
\label{lassl}
\ch{\beta}_{\lb^{(l)}} & \in & \down{b\in \mathbb R^p}{\rm argmin} \ \frac{1}{2} \|y-X b\|_2^2+ \lb^{(l)} \|b\|_1
\eea
   \STATE Set $\lb^{(l+1)}  =  \frac{C_{{\rm var}} }{\sqrt{n}}\|y-X \ch{\beta}_{\lb^{(l)}}^{(l)}\|_2 $      % demande le paq. amsmath
   \STATE $l \leftarrow l+1$
\ENDWHILE
\STATE Set $\ch{\lb}^{(L)}=\lb^{(L)}$, $\ch{\beta}^{(L)}=\ch{\beta}_{\lb^{(L)}}$ and 
$\ch{\sigma}^{(L)}=\frac1{\sqrt{n}}\|y-X \ch{\beta}_{\lb^{(l)}}^{(l)}\|_2 $.
\STATE {\bf Output} $\ch{\beta}^{(L)}$, $\ch{\sg}^{(L)}$ and $\ch{\lb}^{(L)}$.
%%-----------------------------------------------
\end{algorithmic}
\end{algorithm}

Notice that the first step of the fixed point iteration procedure is similar to the correction of the 
standard estimator of $\sigma$ proposed by \cite{Sun:Test10}.

\subsubsection{Simulations results: high SNR}
\label{simsighat1}

As for the case of the standard LASSO with 
known variance of Section \ref{simlasso1} we found that, for $B=40$, the support was exactly recovered in all of 
the 500 experiments. 

\subsubsection{Simulations results: low SNR}
\label{simsighat2}
We performed Monte Carlo experiments in the same setting as for the LASSO in Section \ref{simlasso2}. 

In real situations where the level of magnitude of the regression coefficients may not be 
much higher than the noise level, one observes that false positives often occur for 
the LASSO estimator with known variance. 
As seen from these results, the LASSO estimator where the variance is estimated using the 
penalty $\ch{\lambda}=2\ch{\sigma} \sqrt{2\log p}$ performs at least as well as the standard 
LASSO estimator to which the true variance is available. The estimator $\ch{\sg}$ of the standard deviation 
is a slightly biased as shown in Figure \ref{histhatsig}.

\begin{figure}[htb]
\label{sighat}
\begin{center}
\includegraphics[width=8cm]{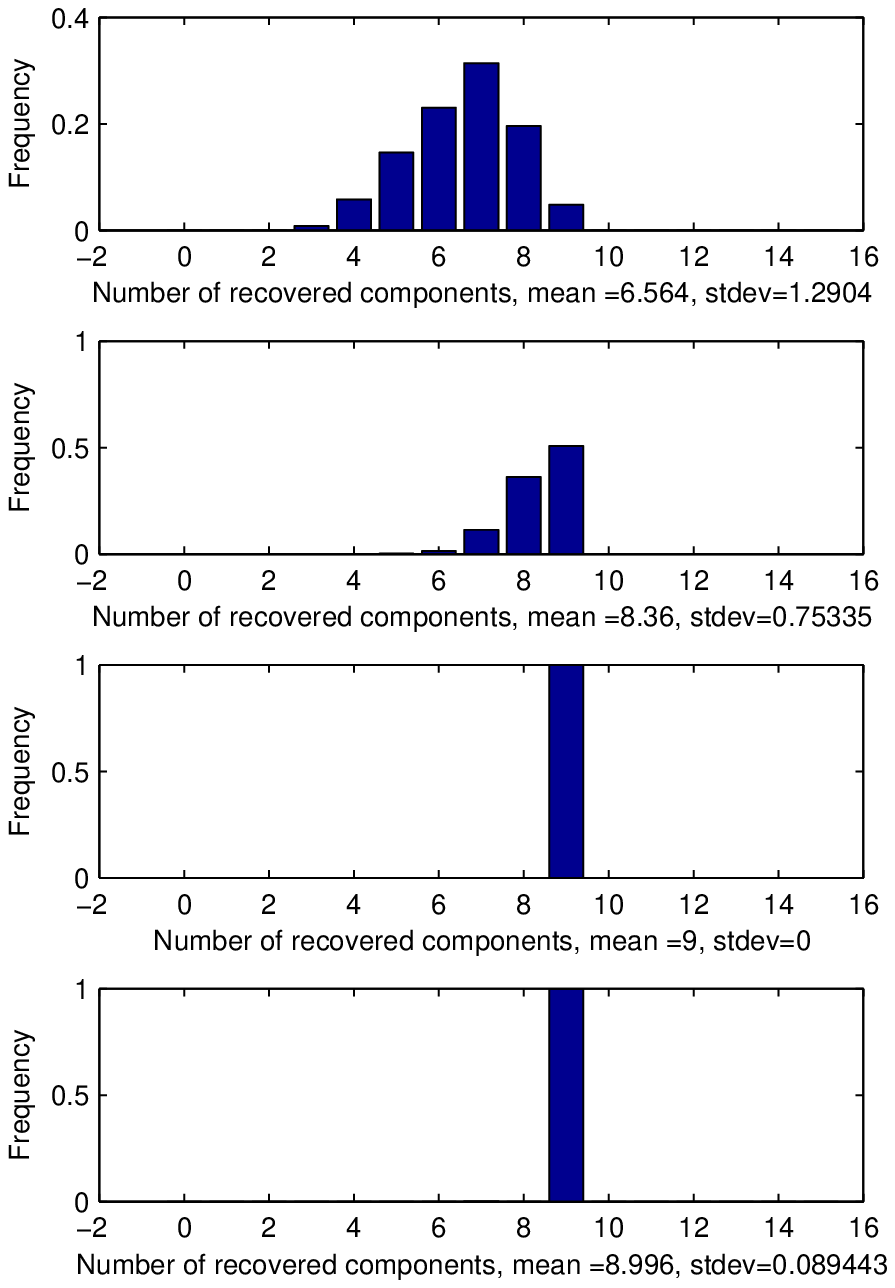}
\includegraphics[width=8cm]{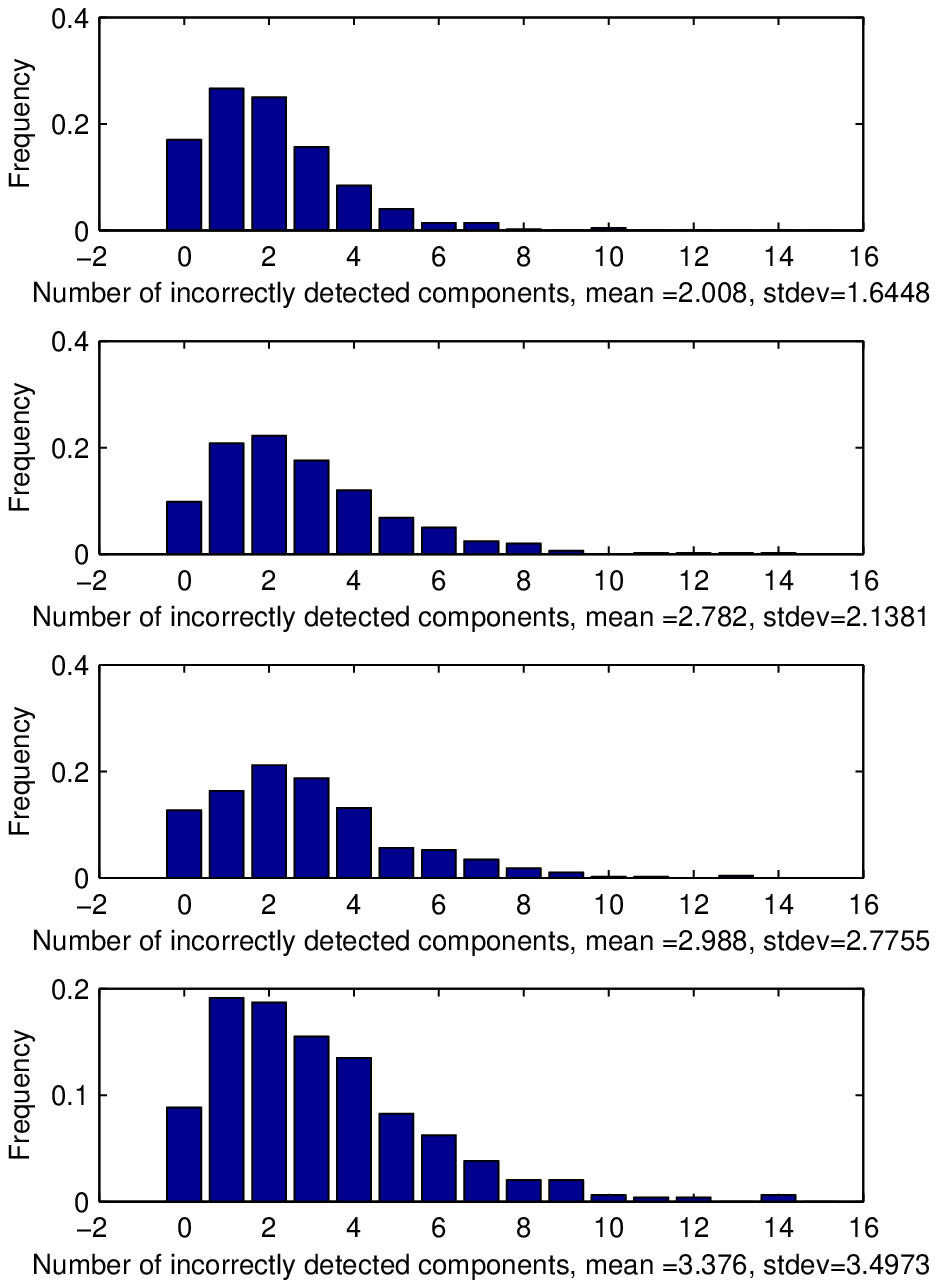}
\caption{Histogram of the number of properly recovered components (left column) and 
the number of false components (right column) for the LASSO estimator with unknown variance 
using Strategy (A) and $\ch{\lambda}=2\ch{\sigma} \sqrt{2\log p}$ for coeff. mean level $B=1,2,5,10$ (from top to bottom).}
\end{center}
\end{figure}

\begin{figure}[htb]
\label{histhatsig}
\begin{center}
\includegraphics[width=13cm]{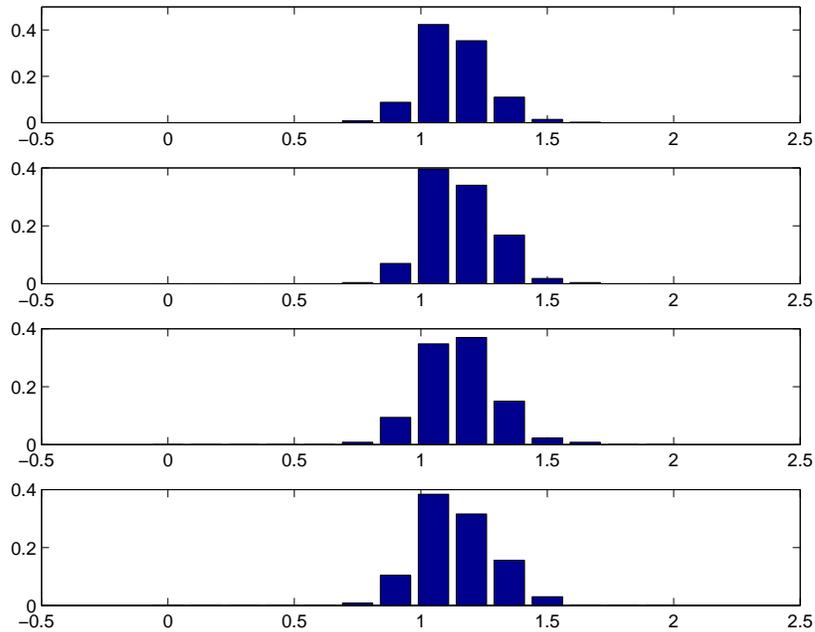}
\caption{Histogram of $\ch{\sg}$ for the LASSO estimator with unknown variance 
using Strategy (A) and $\ch{\lambda}=2\ch{\sigma} \sqrt{2\log p}$ for coeff. mean level $B=1,2,5,10$ (from top to bottom).}
\end{center}
\end{figure}

\subsection{Strategy (B)}

\subsubsection{The algorithm}
Basic computations show that the derivative of $\Gamma$ with respect to $\lambda$ is given by 
\bean
\frac{d \Gamma_B}{d\lambda}(\lambda)
& = & \frac{-\|\ch{\beta}_{\ch{T}_\lb}\|_1 - 
\lambda \ \|(X_{\ch{T}_\lb}^tX_{\ch{T}_\lb})^{-1/2} {\rm sign}\left(\ch{\beta}_{\ch{T}_\lb}\right)\|_2^2 
}
{\|y-X\ch{\beta}_\lb\|_2^2}
\eean
on each $\mathring{I}_k$, where $\mathring{I}_k$, is a maximal open interval on which the support of $\ch{\beta}_\lb$ 
is constant, for $k\in \mathcal K$ and $\cup_{k\in \mathcal K} I_k$ is a connected interval of 
$[0,+\infty)$. See for instance \cite[Section 4]{ChretienDarses:ArXiv11}.  

In order to compute the LASSO estimators $(\ch{\beta},\ch{\lambda})$ satisfying 
the penalty vs. fidelity tradeoff constraint, we need to find $\ch{\lambda}$ 
such that $\Gamma_B(\ch{\lambda})=C$. Since $\Gamma$ is strictly 
decreasing by Lemma \ref{gammafunc}, this task is not difficult to perform. 
A simple Newton-Raphson procedure for 
solving this equation is summarized in Algorithm \ref{nwt} below.

\vspace{.3cm}

\begin{algorithm}
\label{nwt}
\caption{Newton's method for the LASSO with penalty vs. fidelity tradeoff constraint}
\begin{algorithmic}
%%-----------------------------------------------
\STATE {\bf Input} $\lb^{(0)}$, $l=1$ and $\epsilon>0$         % demande le paq. amssymb
%% \ENSURE   % sortie de l'agorithme

\WHILE {$|\lb^{(l+1)}-\lb^{(l)}|\ge \epsilon$}
   \STATE Compute $\ch{\beta}_{\lb^{(l)}}$ as a solution of the LASSO problem
\bea
\label{lassl2}
\ch{\beta}_{\lb^{(l)}} & \in & \down{b\in \mathbb R^p}{\rm argmin} \ \frac{1}{2} \|y-X b\|_2^2+ \lb^{(l)} \|b\|_1
\eea
   \STATE Set $\lb^{(l+1)}  =  \lb^{(l)} - \displaystyle{\frac{d \Gamma_B}{d\lb}(\lb^{(l)})^{-1}}
\ \Gamma(\lb^{(l)})$        % demande le paq. amsmath
   \STATE $l \leftarrow l+1$
\ENDWHILE
\STATE Set $\ch{\lb}^{(L)}=\lb^{(L)}$, $\ch{\beta}^{(L)}=\ch{\beta}_{\lb^{(L)}}$ and let $\ch{\sg}^{(L)^2}$ 
be given by 
\bea
\ch{\sigma}^{(L)^2} & = & \displaystyle{\frac{\|y-X \ch{\beta}^{(L)}\|_2^2 + 
2\ch{\lb}^{(L)} \ \|\ch{\beta}^{(L)}\|_1}{n}}
\eea
\STATE {\bf Output} $\ch{\beta}^{(L)}$, $\ch{\sg}^{(L)^2}$ and $\ch{\lb}^{(L)}$.
%%-----------------------------------------------
\end{algorithmic}
\end{algorithm}

\subsubsection{Simulations results: high SNR}
\label{simtradeoff1}

As for the case of the standard LASSO with 
known variance of Section \ref{simlasso1} we found that, for $B=40$, the support was exactly recovered in all of 
the 100 experiments. 

\subsubsection{Simulations results: low SNR}
\label{simtradeoff2}

We performed Monte Carlo experiments in the same setting as for the LASSO in Section \ref{simlasso2}. 

Figure \ref{tradeoff}
below shows the histogram of the number of properly recovered components (left column) and 
the number of false components (right column) for the LASSO estimator with unknown variance 
and the penalty vs. fidelity tradeoff constraint for the values $C=.01,.1,.5$. The 
instances where Newton's iterations did not converge were simply discarded although 
implementing a line search or a trust region strategy could easily have produced a 
correct result at the price of increasing the computational time for the Monte Carlo simulations
study.  
\begin{figure}[htb]
\label{tradeoff}
\begin{center}
\includegraphics[width=8cm]{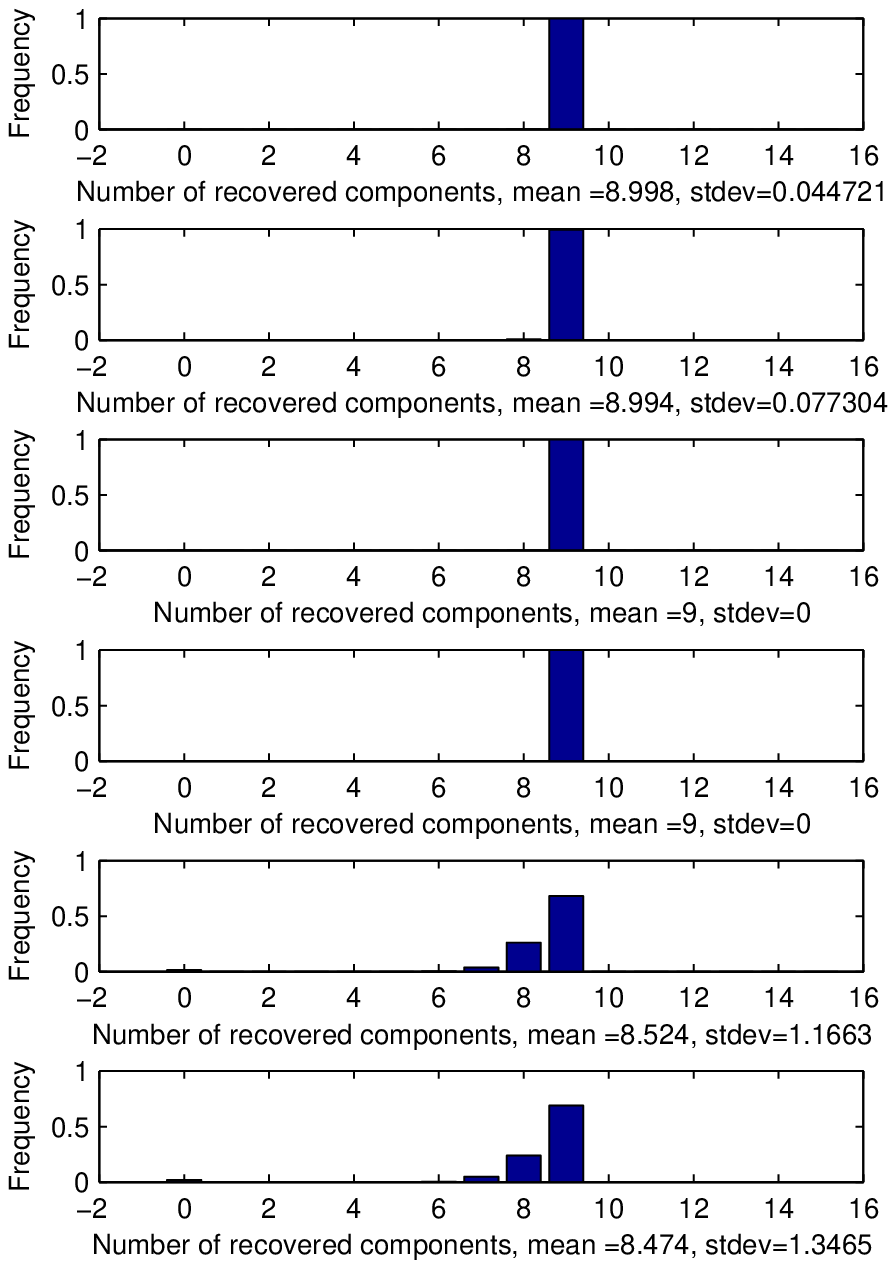}
\includegraphics[width=8cm]{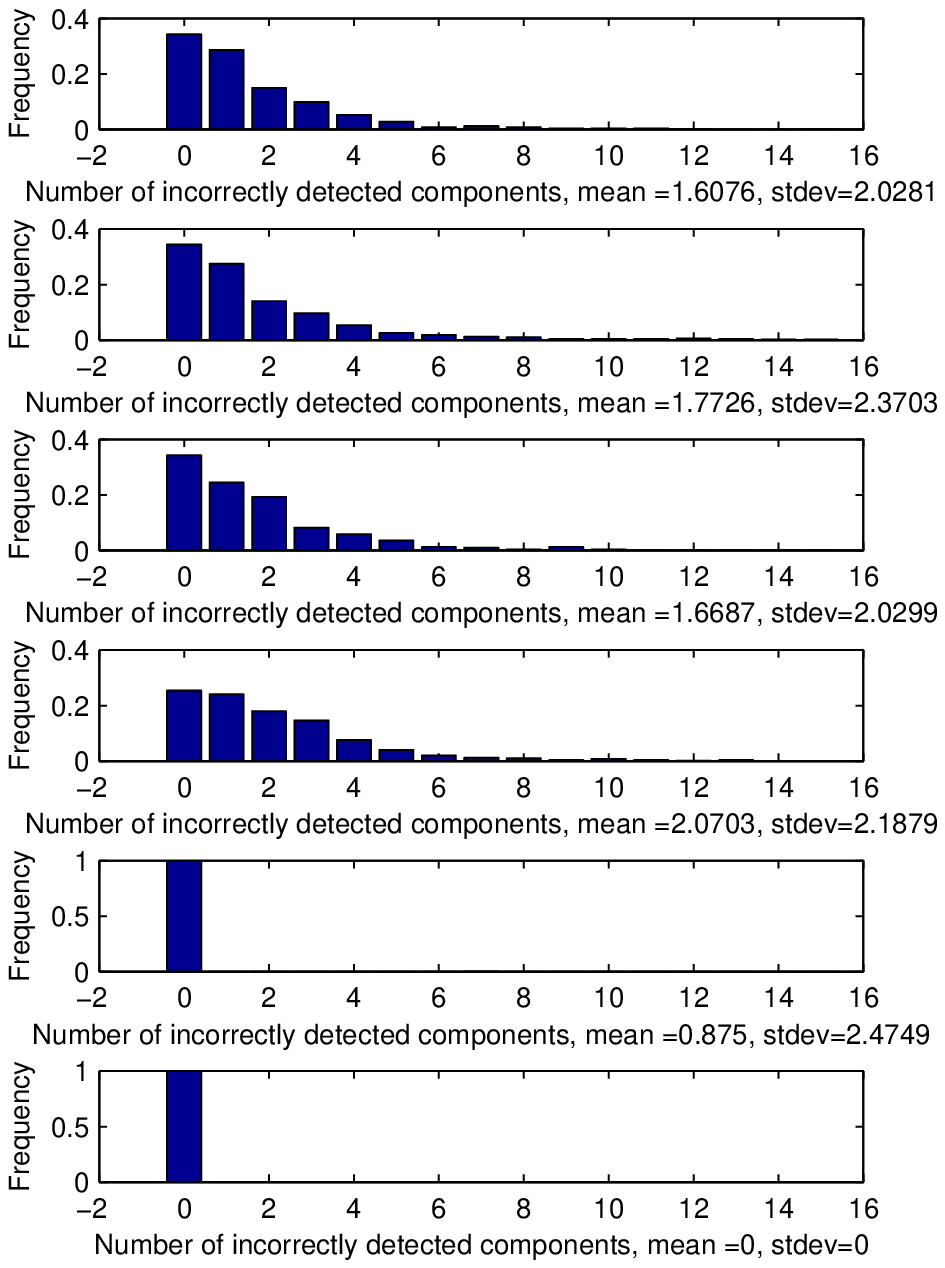}
\caption{Histogram of the number of properly recovered components (left column) and 
the number of false components (right column) for the LASSO estimator with unknown variance 
using Strategy (B) for $C=0.01,.01,0.5,1,5,10$ (from top to bottom) with level $B=5$.
% in the case  $\beta_j = 2 \mu_j + \ep_j$, where $\mu_i$ are independent Bernoulli $\pm 1$ variables and $\ep_i$ are independent standard Gaussian variables.
}
\end{center}
\end{figure}

The number of well recovered components of $\beta$ is always equal to the 
true value 9 as $C$ increases for all values of $C$. On the other hand, the number of false positives 
increases with $C$. Our estimator with penalty vs. fidelity tradeoff constraint is seen to have quite better 
performances than the standard LASSO and LASSO with estimated variance of the previous section 
with respect to the number of false positives; 
compare Figure \ref{tradeoff} with the second row of Figure \ref{sig} or Figure \ref{sighat}. 
This was the main objective for proposing this strategy and the presented simulations show 
encouraging evidence of its robust behavior in the low SNR case. 
The low dependency on $C$ is a property which might be well appreciated 
in practice when neither the signal nor the noise levels are precisely known ahead of time.

%Finally, Figure \ref{sighat_tradeoff} shows the histograms of the estimated variances for $C=.01,.1,.5,1$, 
%which are well concentrated near the true value $1$, although with a slight bias upwards. 

%\begin{figure}[htb]
%\label{sighat_tradeoff}
%\begin{center}
%\includegraphics[width=13cm]{sighatTradeoff.eps}
%\caption{Histogram of the variance estimator for the LASSO with unknown variance 
%using Strategy (B) for $C=.01,.1,.5,1$ in the case $B=5$.}
%\end{center}
%\end{figure}

\subsection{Comments}
The simulations results confirmed the theoretical findings that, in the high SNR case, 
Strategy (A) and Strategy (B) perform as well, without knowing the variance ahead of time, as the standard LASSO which 
uses the true value of the variance. Although the results are presented for a particular set of 
parameters, this behavior was observed more generally for a large number of numerical experiments with 
different parameter configurations, for which the standard LASSO exactly recovers the true support and sign 
pattern. In the low SNR setting, the standard LASSO and Strategy (A) perform poorly 
in the sense that many false components are selected. 
The Monte Carlo experiments show that Strategy (B) is more robust in the low SNR setting, in the sense that 
the estimated support contains much less false components. Surprisingly, this phenomenon was observed 
over a wide range of values for the constant $C$. In other words, the dependence of Strategy (B)'s 
performance on $C$ appeared as rather unessential for the recovery problem in the low SNR setting. 
As a preliminary practical conclusion, Strategy (A) appeared to be more suitable for the high SNR setting 
and Strategy (B) more suitable for the low SNR setting. In practice, the choice of $C$ in Strategy B could be based on standard model selection procedures (AIC, BIC, Foster and George, etc) for comparing the obtained supports over a large range of possible values. The limited number of possible supports occurring in practice as $C$ varies makes this comparison numerically tractable. 

Another interesting question is the one of the convergence of the algorithms proposed for Strategies (A) and (B). 
From the practical viewpoint, let us report that convergence was observed except in rare cases during the Monte Carlo 
experiments. However, we decided not to pursue their theoretical analysis here, since 
more robust methods enjoying global convergence have been proposed in the literature during the last 
twenty years. We refer the interested reader to e.g. \cite{Ralph:MOR94} for a globally convergent damped Newton method.  
Such methods are however more delicate to implement and the algorithms proposed in the present paper seem 
to be a good choice to start with in most practical experiments. 

Finally, there remains the question of choosing between Strategy A and Strategy B on a given practical problem. One reasonable way to proceed might simply be as follows: compare the supports obtained via both methods, using a standard model selection procedure such as BIC, AIC, Foster and George's criterion, etc.

%\section{Appendix}
\appendices
%\section{Proof of (\ref{poisse})}

%\bigskip

\section{Proof of Proposition \ref{linfs}} \label{cp-cond}

First, let us recall a technical result we obtained in \cite{cd_inv}:

\begin{lem}\label{norm12}
The following bound holds:
\bea
\bP\left( \|RH\|_{1\rightarrow 2}\ge v\right) &\le  & p  \left(e \frac{s}{p}\frac{\|X\|^2}{v^2} \right)^{v^2/\mu(X)^2},
\eea
provided that\ $e \frac{s}{p}\frac{\|X\|^2}{v^2}\le 1$.
\end{lem}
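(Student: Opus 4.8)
The plan is to reduce this matrix tail bound to $p$ scalar tail bounds by a union bound over columns, each of which is then handled by a Chernoff argument for a sum of independent, bounded, nonnegative random variables. First I would unpack $\|RH\|_{1\to 2}$: since the unit ball of $\ell_1^p$ is the convex hull of $\{\pm e_j\}_{1\le j\le p}$ and $x\mapsto\|RHx\|_2$ is convex, the $\ell_1\to\ell_2$ operator norm equals the largest Euclidean norm of a column of $RH$. The $j$-th column of $H=X^tX-\Id$ has entries $\la X_i,X_j\ra$ for $i\neq j$ and $0$ on the diagonal, and left-multiplication by $R=\mathrm{diag}(\delta)$ keeps the rows indexed by $\{i:\delta_i=1\}$; hence $\|RH\|_{1\to 2}^2=\max_{1\le j\le p}\sum_{i\neq j}\delta_i\la X_i,X_j\ra^2$. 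A union bound then gives $\bP(\|RH\|_{1\to 2}\ge v)\le p\max_j\bP\big(\sum_{i\neq j}\delta_i\la X_i,X_j\ra^2\ge v^2\big)$.

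Next, fix $j$ and set $Y=\sum_{i\neq j}\delta_i c_i$ with $c_i=\la X_i,X_j\ra^2$. The $\delta_i$ being independent $\mathcal B(s/p)$, the summands are independent; moreover $0\le c_i\le\mu(X)^2$ by definition of the coherence, while $\sum_{i\neq j}c_i=\|X^tX_j\|_2^2-1\le\|X\|^2-1\le\|X\|^2$ using that $X_j$ is unit-norm, so $\mathbb{E}[Y]\le\frac sp\|X\|^2$. The key step is the exponential-moment estimate: from the convexity bound $e^{\theta c_i}\le 1+\frac{c_i}{\mu(X)^2}\big(e^{\theta\mu(X)^2}-1\big)$ for $\theta\ge0$ and independence, one gets $\mathbb{E}[e^{\theta Y}]\le\exp\!\big(\tfrac{\mathbb{E}[Y]}{\mu(X)^2}(e^{\theta\mu(X)^2}-1)\big)$. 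Markov's inequality and optimization over $\theta$, with the optimal value $\theta^\star=\mu(X)^{-2}\log\!\big(v^2/\mathbb{E}[Y]\big)$ (nonnegative exactly when $v^2\ge\mathbb{E}[Y]$), then yield $\bP(Y\ge v^2)\le\big(e\,\mathbb{E}[Y]/v^2\big)^{v^2/\mu(X)^2}$. Since the right-hand side is increasing in $\mathbb{E}[Y]$, I would replace $\mathbb{E}[Y]$ by $\frac sp\|X\|^2$ and plug into the union bound; the standing hypothesis $e\frac sp\frac{\|X\|^2}{v^2}\le1$ serves precisely to guarantee $v^2\ge\mathbb{E}[Y]$ (legitimizing the Chernoff optimization) and to make the base of the power at most $1$.

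The one place requiring care is the Chernoff computation: one must retain the sharp per-summand bound $e^{\theta c_i}\le 1+\frac{c_i}{M}(e^{\theta M}-1)$ with $M=\mu(X)^2$ rather than a crude sub-Gaussian estimate, since it is exactly the exponent $v^2/\mu(X)^2$ — driven by the coherence — that makes this lemma useful in the subsequent proofs (it is what lets the assumption $\mu(X)\le C_\mu/\log p$ bite). Everything else (the union bound, the bound $\sum_{i\ne j}c_i\le\|X\|^2$, the monotonicity in $\mathbb{E}[Y]$) is routine. This reproduces the estimate already established in \cite{cd_inv}.
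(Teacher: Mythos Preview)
Your argument is correct and complete. Note, however, that the present paper does not actually prove this lemma: it is only recalled from \cite{cd_inv} (see the sentence ``let us recall a technical result we obtained in \cite{cd_inv}'' preceding the statement). Your proof --- reducing $\|RH\|_{1\to2}$ to the maximum column norm, applying a union bound over the $p$ columns, and then bounding each column-norm tail via a Chernoff argument for sums of independent bounded nonnegative variables with the convexity estimate $e^{\theta c_i}\le 1+\tfrac{c_i}{\mu(X)^2}(e^{\theta\mu(X)^2}-1)$ --- is precisely the natural route and is, up to cosmetic differences, the proof given in \cite{cd_inv}. All the steps you outline (the identity $\sum_{i\ne j}\la X_i,X_j\ra^2=\|X^tX_j\|_2^2-1\le\|X\|^2$, the optimal $\theta^\star$, and the role of the hypothesis $e\tfrac sp\tfrac{\|X\|^2}{v^2}\le1$) are correctly identified.
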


\label{linfsproof}
Let us introduce the events:
\bean
E & = & \{\|X_T^t X_T-\Id\| \le r\}\\
B & = & \left\{ \|RH\|_{1\rightarrow 2}\le \frac{c}{\sqrt{\log p}} \right\}.
\eean

The proofs that Conditions $(i)$ and $(ii)$ hold with high probability are trivial modifications of the ones 
given in \cite{CandesPlan:AnnStat09} up to the constants. The proofs that Conditions $(iii)$ and $(iv)$ hold with high probability can be performed using the following by-product inequality from our Lemma \ref{norm12}:
\bea \label{norm12bis}
\bP\left( B^c \right) &\le  & p \exp \left( \frac{c^2}{C_\mu^2} \log\left(e\frac{C_{\rm spar}}{c^2}\right) \ \log p  \right),
\eea
instead of using \cite[Lemma 3.5]{CandesPlan:AnnStat09} and \cite[Lemma 3.6]{CandesPlan:AnnStat09}. Here, we take
\bea \label{cc}
c^2 & \ge & \max(e^2C_{\rm spar} \ ; (1+\alpha)C_\mu),
\eea
so that 
\bea \label{norm12ter}
\bP\left( B^c \right) &\le  & \frac{1}{p^\alpha}.
\eea
All the proofs are moreover based on the simple inequality 
\bean
\bP (\mathcal  A) & = & \bP (\mathcal  A\cap E\cap B) + \bP (\mathcal  A\cap (E^c\cup B^c))\\
& \le &{\rm E} \left[\bP\left(\mathcal  A \mid R\right) \: \1_{E\cap B} \right] + \bP (E^c) + \bP (B^c),
\eean
and the bound, for a given vector $W$:
\bea \label{hoeff}
\bP\left( | \la W, X \ra| >t \right) & \le &  2  e^{-t^{2}/(2 \|W\|_2^2)}.
\eea
This last bound holds true for sub-Gaussian random vectors with independent components having Bernoulli or standard Gaussian 
distribution, for instance.

\subsection{Condition $(i)$}

Here, let $W_{i}$ be the $i$th row of  $(X_T^t X_T)^{-1}X_T^t$. Since $\la W_{i}, z \ra \sim \mathcal  N(0,\|W_i\|_2^2)$, we have from (\ref{hoeff}) and the union bound:
\bean
\bP\left( \max_{i\in T} | \la W_{i}, z \ra| >t \right) & \le &  2s \ e^{-t^{2}/(2 \max_i\|W_i\|_2^2)}.
\eean
Note that on $E$:
\beq
\max_{i\in T} \| W_{i} \|_{2} \ \le \ \|(X_T^t X_T)^{-1}\|\  \|X_T^t \| \ \le \frac{\sqrt{1+r}}{1-r}.
\eeq
One then obtains
\bean
\bP \left(\|(X_T^t X_T)^{-1}X_T^t z\|_{\infty} \le  \sigma\ \kappa \sqrt{\log p} \right) &\ge & 1-\frac{2}{p^{\alpha}},
\eean
whenever 
\bea \label{kappa1}
\kappa & \ge & \frac{\sqrt{2(1+\alpha)(1+r)}}{1-r}.
\eea

\subsection{Condition $(ii)$} Let us show that the estimate $(ii)$ holds with high probability. This is an actual consequence of our Lemma \ref{norm12}.

First, as in \cite{CandesPlan:AnnStat09} p.2171 and Lemma 3.3 p.2166, we write the inequality
\bean
\|(X_T^t X_T)^{-1}\sgn(\beta_T)\|_{\infty} & \le & 1+\max_{i\in T}|\la W_{i},\sgn(\beta_{T})\ra |,
\eean
where $W_{i}$ is the $i$th row or column of $(X_T^t X_T)^{-1}-\Id$. Set
\bean
\mathcal  A & =& \left\{ \max_{i\in J}|\la W_{i},\sgn(\beta_{T})\ra | \ge 2 \right\}.
\eean
Hoeffding's inequality yields:
\beq
\bP \left(\mathcal  A | R \right) \ \le \ 2|J| \exp\left(-\frac{2^{2}}{2\d \max_{i\in J}\| W_{i}\|_2^{2}}\right).
\eeq
As in \cite{CandesPlan:AnnStat09} p.2171 and p.2172, we write
$\|W_{i}\|_2  \le  \frac{\|RHR e_{i}\|_2}{1-r}$.
Thus on $E$:
\bean
\|W_{i}\|_2  \le  \frac{\|RHR\|_{1\rightarrow 2}}{1-r}\le  \frac{\|RH\|_{1\rightarrow 2}}{1-r}.
\eean
Recall that $\bP (B^c)\le \frac{1}{p^\alpha}$ since $c$ satisfies (\ref{cc}). Moreover
\bean
{\rm E} \left[\bP\left(\mathcal  A \mid R\right) \: \1_{E\cap B} \right] & \le &  \frac{1}{p^\alpha}
\eean
holds true if 
\bean
c^2 & \le & \frac{2(1-r)}{1+\alpha}.
\eean
We can easily check that this last condition is compatible with (\ref{cc}) and
\bean
C_\mu & = & \frac{r}{1+\alpha}\\
C_{\rm spar} & = & \frac{r^2}{(1+\alpha)e^2},
\eean
whenever $r\in(0,1/2)$. Therefore, when $r\in(0,1/2)$, the event 
$$\|(X_T^t X_T)^{-1}\sgn(\beta_T)\|_{\infty}  \le 1 +2 =3$$ 
holds with probability at least $1 -\frac{3}{p^{\alpha}}$.

\subsection{Condition $(iii)$}

Here, $W_{i}=(X_T^t X_T)^{-1}X_T^tX_i$. Notice that on $E\cap B$:
\beq
\max_{i\in T^c} \| W_{i} \|_{2} \  \le \frac{c}{(1-r) \sqrt{\log p}}.
\eeq

Using (\ref{norm12ter}) again and the same previous arguments, we obtain
\bean
\bP\left(\|X_{T^c}^t X_T (X_T^t X_T)^{-1}\sgn(\beta_T)\|_{\infty}\le \frac14\right) &\ge &1-\frac{3}{p^{\alpha}}.
\eean

\subsection{Condition $(iv)$}
If one now sets $W_{i}$ as the $i$th row of  $\Id -X_T (X_T^t X_T)^{-1}X_T^t $ and note that on $E$ for any $i\in T$:
\beq
\| W_{i} \|_{2}  \le  \|X_i\|_{2} =1,
\eeq
then:
\bean
\bP \left(\|X_{T^c}^t \left( I -X_T (X_T^t X_T)^{-1}X_T^t \right) z\|_{\infty}\le \sigma \ \kappa \ \sqrt{\log p}\right) & \ge & 1-\frac{2}{p^{\alpha}},
\eean
whenever
\bea \label{kappa2}
\kappa & \ge & \sqrt{2(1+\alpha)}.
\eea

\subsection{Choosing $\kappa$}
The parameter $\kappa$ has to satisfy (\ref{kappa1}) and (\ref{kappa2}). Since $r\in(0,\frac12]$, one has $\frac{\sqrt{2(1+r)}}{1-r}\le 2\sqrt{3}\approx 3.4$. Thus we simply chose 
\bean
\kappa & = & 4\sqrt{1+\alpha},
\eean 
which is Eq. (\ref{kappa}).

\section{Some properties of the $\chi^2$ distribution}

We recall the following useful
bounds for the $\chi^2(\nu)$ distribution of degree of freedom $\nu$
\begin{lemm} The following bounds hold:
\label{cki}
\bean
\bP\left(\chi(\nu) \ge  \sqrt{\nu}+ \sqrt{2t} \right) & \le & \exp(-t)\\
\bP\left(\chi(\nu) \le \sqrt{u \nu}\right)  & \le & \frac2{\sqrt{\pi \nu}}
\left(u\ e/2\right)^{\frac{\nu}4}.
\eean
\end{lemm}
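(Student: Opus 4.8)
The plan is to treat the two inequalities separately, writing $\chi(\nu)$ as $\|g\|_2$ with $g\sim\mathcal N(0,\Id_\nu)$, so that the first estimate is an upper-tail bound for a $1$-Lipschitz functional of a Gaussian vector and the second is a lower-tail bound for $\|g\|_2^2\sim\chi^2(\nu)$.

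For the first bound I would use Gaussian concentration. The map $g\mapsto\|g\|_2$ is $1$-Lipschitz, and by Jensen's inequality its mean satisfies ${\rm E}\|g\|_2\le ({\rm E}\|g\|_2^2)^{1/2}=\sqrt\nu$. The Borell--Sudakov--Tsirelson inequality (equivalently, the Herbst argument applied to the Gaussian logarithmic Sobolev inequality) then gives $\bP(\|g\|_2\ge {\rm E}\|g\|_2+r)\le e^{-r^2/2}$ for all $r>0$; since $\sqrt\nu\ge {\rm E}\|g\|_2$, choosing $r=\sqrt{2t}$ yields $\bP(\chi(\nu)\ge\sqrt\nu+\sqrt{2t})\le e^{-t}$ at once. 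As an alternative avoiding concentration, one may start from the Laurent--Massart estimate $\bP(\chi^2(\nu)\ge\nu+2\sqrt{\nu t}+2t)\le e^{-t}$ and note $(\sqrt\nu+\sqrt{2t})^2=\nu+2\sqrt{2\nu t}+2t\ge \nu+2\sqrt{\nu t}+2t$, so that the event in the lemma is contained in the one above.

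For the second bound I would start from the exponential Markov inequality for the lower tail of $\chi^2(\nu)$: for any $\lambda>0$, $\bP(\chi^2(\nu)\le u\nu)\le e^{\lambda u\nu}\,{\rm E}\,e^{-\lambda\chi^2(\nu)}=e^{\lambda u\nu}(1+2\lambda)^{-\nu/2}$, and minimizing over $\lambda$ (the optimum is $\lambda=(1-u)/(2u)$ when $u<1$) gives $\bP(\chi^2(\nu)\le u\nu)\le (u\,e^{1-u})^{\nu/2}$. To bring this to the stated shape I would invoke the elementary inequality $(u\,e^{1-u})^2\le u\,e/2$, valid for every $u>0$ since the function $u\mapsto 2u\,e^{1-2u}$ has maximum $1$ at $u=1/2$; raising to the power $\nu/4$ turns the bound into $(u\,e/2)^{\nu/4}$. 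To recover the polynomial prefactor $2/\sqrt{\pi\nu}$ one cannot simply drop the Gaussian weight; instead I would estimate $\bP(\chi^2(\nu)\le u\nu)=\int_0^{u\nu}x^{\nu/2-1}e^{-x/2}\,dx/(2^{\nu/2}\Gamma(\nu/2))$ by bounding the integrand by its maximum over $[0,u\nu]$ and then using a Stirling lower bound of the form $\Gamma(\nu/2+1)\ge \sqrt{\pi\nu}\,(\nu/(2e))^{\nu/2}$; combining this with the monotone reshaping above produces the claimed $\frac{2}{\sqrt{\pi\nu}}(u\,e/2)^{\nu/4}$.

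The only part that needs care is the bookkeeping of the constant $2/\sqrt{\pi\nu}$ uniformly in $u$ and $\nu$: for the handful of small-$\nu$ and $u$ near $1$ configurations one has to combine the density/Stirling estimate with the trivial bound $\bP\le 1$ rather than with the plain Chernoff estimate. All the remaining ingredients --- Jensen's inequality, Gaussian concentration, the chi-square Laplace transform and Stirling's formula --- are standard, so I expect no conceptual obstacle beyond this constant-chasing.
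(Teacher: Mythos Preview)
Your treatment of the upper-tail inequality is correct; either Gaussian concentration of the $1$-Lipschitz map $g\mapsto\|g\|_2$ together with ${\rm E}\|g\|_2\le\sqrt\nu$, or the Laurent--Massart bound, yields the statement immediately. The paper simply defers to Massart's lecture notes here, so you have actually supplied more than it does.

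For the lower-tail inequality there is a genuine gap. Bounding the full integrand $x^{\nu/2-1}e^{-x/2}$ by its maximum on $[0,u\nu]$ and multiplying by the interval length gives, after Stirling and your reshaping $(ue^{1-u})^2\le ue/2$, a bound of the form $\frac{\sqrt\nu}{2\sqrt\pi}\,(ue/2)^{\nu/4}$: the prefactor has $\sqrt\nu$ in the numerator, not the denominator, so you are off by a factor $\nu/4$ from the claim. The patch via the trivial bound $\bP\le 1$ does not rescue this uniformly: for any fixed $u\in(1/(2e),2/e)$ one has $\frac{2}{\sqrt{\pi\nu}}(ue/2)^{\nu/4}\to 0$ as $\nu\to\infty$, so the trivial bound is useless, while your intermediate estimate diverges relative to the target. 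The Chernoff route alone gives $(ue/2)^{\nu/4}$ with no prefactor at all, which is again weaker than claimed whenever $\nu>4/\pi$.

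The paper's device is to split the integrand rather than bound it whole. After the substitution $t=x/2$ one writes $t^{\nu/2-1}e^{-t}=t^{\nu/4-1}\cdot t^{\nu/4}e^{-t}$, bounds only the second factor by its global maximum $(\nu/(4e))^{\nu/4}$, and integrates the remaining power exactly to get $(u\nu/2)^{\nu/4}/(\nu/4)$. The extra $1/(\nu/4)$ produced by this integration, combined with the Stirling lower bound $\Gamma(\nu/2)\ge\sqrt{2\pi}(\nu/2)^{\nu/2-1/2}e^{-\nu/2}$, is precisely what turns the prefactor into $2/\sqrt{\pi\nu}$ and makes the estimate hold for every $u>0$ without any case analysis.
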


\begin{proof}
For the first statement, see e.g. \cite{Massart:LNM??}. For the second statement, recall that 
\bean
\bP\left(\chi^2(\nu) \le u \nu \right)  & = & \int_0^{u\frac{\nu}{2}} \frac{t^{\frac{\nu}2-1}e^{-t}}{\Gamma(\frac{\nu}2)} dt\\
& = & \int_0^{u\frac{\nu}{2}} \frac{t^{\frac{\nu}2-1-\alpha}t^\alpha e^{-t}}{\Gamma(\frac{\nu}2)} dt.
\eean
Since $\max_{t\in \mathbb R^+} t^\alpha e^{-t}=(\alpha/e)^\alpha$ and is attained at $t=\alpha$, we obtain that 
\bean
\bP\left(\chi^2(\nu) \le u \nu \right) \ \le \ \frac{(\alpha/e)^\alpha}{\Gamma(\frac{\nu}2)} 
\int_0^{u\frac{\nu}{2}} t^{\frac{\nu}2-1-\alpha} dt \ = \ \frac{(\alpha/e)^\alpha}{(\frac{\nu}2-\alpha)\Gamma(\frac{\nu}2)} 
\left(u\frac{\nu}{2}\right)^{\frac{\nu}2-\alpha}.
\eean
Take for instance $\alpha=\frac{\nu}4$ and obtain 
\bea
\bP\left(\chi^2(\nu) \le u \nu \right) & = & \frac{(\nu/4e)^{\frac{\nu}4}}{\frac{\nu}4\Gamma(\frac{\nu}2)} 
\left(u\frac{\nu}{2}\right)^{\frac{\nu}4}.
\eea
On the other hand, we have 
\bean 
\Gamma(z) & \ge & \sqrt{2\pi}\ \frac{z^{z-\frac12}}{e^{z}}
\eean
and then, 
\bean 
\frac{(\nu/4e)^{\frac{\nu}4}}{\frac{\nu}4\Gamma(\frac{\nu}2)} & \le & \sqrt{\frac2{\pi}}
\frac{(e/2)^{\frac{\nu}4} \left(\frac{\nu}2\right)^{-\frac{\nu}4}}{\sqrt{\frac{\nu}2} }.
\eean 
Hence,
\bean
\bP\left(\chi^2(\nu) \le u \nu \right) & \le & \sqrt{\frac2{\pi}}
\frac{(u\ e/2)^{\frac{\nu}4} }{\sqrt{\frac{\nu}2} }  = \frac2{\sqrt{\pi \nu}}
\left(u\ e/2\right)^{\frac{\nu}4},
\eean
as desired.
\end{proof}

\section{Properties of the standard LASSO}
\label{lemgam}

\subsection{Reminders on the LASSO subgradient conditions}
\label{optlamb}
In \cite{Fuchs:IEEEIT04} Section III, it is proven that a necessary and sufficient optimality condition in (\ref{lass}) is the two following conditions:
\bea
\label{cond1}
X_T^t (y-X\mathsf{\ch{\beta}_\lb}) & = & \lb \: \sgn(\beta_T) \\
\label{cond2}
\|X_{T^c}^t (y-X\mathsf{\ch{\beta}_\lb})\|_{\infty} & \le & \lb.
\eea
Moreover, if $\|X_{T^c}^t (y-X\mathsf{\ch{\beta}_\lb})\|_{\infty} < \lb$, then problem (\ref{lass}) admits a unique solution.

Let us also recall (see \cite{Dossal:HAL11} and \cite{ChretienDarses:ArXiv11}) that the support $\ch{T}_{\lb}\subset \{1,\ldots,p\}$ of $\ch{\beta}_\lb$ verifies
\bea \label{support-lasso}
|\ch{T}_\lambda| & \le & n.
\eea

\subsection{General properties of $\lambda \mapsto \ch{\beta}_{\lambda}$}

Recall that $\ch{\beta}_{\lambda}$ is the standard LASSO estimator of $\beta$ parametrized by $\lambda$,

The following notations will be useful. Define $\mathcal  L$ as the cost function:
\bef{\mathcal  L}
(0,+\infty)\times \R^{p} &  \longrightarrow & \R_{+}\\
(\lb,b) & \longmapsto & \d \frac12 \|y-X b\|_2^2+\lb \|b\|_1,
\label{calL}
\eef
and for all $\lb>0$,
\bean
\theta(\lambda) & = & \inf_{b\in \mathbb R^p} \mathcal  L(\lb,b).
\eean

%We start with a preliminary lemma. 
\begin{lemm}
\label{conc}
Let the Generic Condition hold. Then, the function $\theta$ is concave and non-decreasing.\\
\end{lemm}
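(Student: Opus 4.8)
The plan is to exploit the fact that $\theta(\lambda)=\inf_{b\in\R^p}\mathcal L(\lambda,b)$ is a pointwise infimum of a family of functions of $\lambda$, each of which is affine in $\lambda$. Indeed, for every fixed $b\in\R^p$, the map $\lambda\mapsto \mathcal L(\lambda,b)=\frac12\|y-Xb\|_2^2+\lambda\|b\|_1$ is affine (its slope is $\|b\|_1\ge 0$ and its intercept is $\frac12\|y-Xb\|_2^2\ge 0$), hence in particular concave. Since the infimum of an arbitrary family of concave functions is concave, $\theta$ is concave on $(0,+\infty)$. This is the core of the argument and requires no regularity input beyond the elementary observation above; the Generic Condition is not even needed for concavity, though it guarantees (via Proposition \ref{propun}) that the infimum is attained at the unique point $\ch\beta_\lambda$, which makes the statement meaningful and lets us write $\theta(\lambda)=\mathcal L(\lambda,\ch\beta_\lambda)$ when convenient.

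For monotonicity, I would argue directly from the definition: if $0<\lambda_1\le\lambda_2$, then for every $b$ we have $\mathcal L(\lambda_1,b)\le \mathcal L(\lambda_2,b)$ because $\|b\|_1\ge 0$, so taking the infimum over $b$ on both sides gives $\theta(\lambda_1)\le\theta(\lambda_2)$. Hence $\theta$ is non-decreasing. (One can alternatively combine this with Lemma \ref{cont}: by the envelope theorem / Danskin-type reasoning, $\theta$ is differentiable at generic $\lambda$ with $\theta'(\lambda)=\|\ch\beta_\lambda\|_1\ge 0$, and concavity of $\theta$ forces $\lambda\mapsto\|\ch\beta_\lambda\|_1$ to be non-increasing — recovering the last assertion of Lemma \ref{cont} — but this refinement is not needed for the present statement.)

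I do not anticipate a genuine obstacle here: both properties follow from the ``infimum of affine functions'' structure and the sign of the coefficient $\|b\|_1$. The only point requiring a word of care is that the infimum defining $\theta$ is taken over the noncompact set $\R^p$; this is harmless since, under the Generic Condition, Proposition \ref{propun} guarantees the infimum is attained (and Lemma \ref{cont} gives boundedness of $\lambda\mapsto\ch\beta_\lambda$ on compact $\lambda$-ranges), so $\theta$ is finite-valued on $(0,+\infty)$ and the manipulations above are fully justified.
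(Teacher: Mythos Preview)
Your proof is correct. For concavity you use exactly the same observation as the paper: $\theta$ is a pointwise infimum of affine functions of $\lambda$, hence concave. For monotonicity, however, you take a slightly different and more elementary route: you compare $\mathcal L(\lambda_1,b)\le\mathcal L(\lambda_2,b)$ directly and pass to the infimum. The paper instead invokes the ``filling property'' of \cite[Chapter~XII]{Hiriart:CAMA93} to identify the subdifferential $\partial\theta(\lambda)$ with the singleton $\{\|\ch\beta_\lambda\|_1\}$, so that $\theta$ is differentiable with $\theta'(\lambda)=\|\ch\beta_\lambda\|_1\ge 0$, whence monotonicity. Your argument is shorter and avoids any appeal to subdifferential calculus; the paper's argument, on the other hand, establishes the derivative formula $\theta'(\lambda)=\|\ch\beta_\lambda\|_1$ as a by-product, and this formula is precisely what the paper uses later (in the proof of Lemma~\ref{cont}(i)) to deduce that $\lambda\mapsto\|\ch\beta_\lambda\|_1$ is non-increasing from the concavity of $\theta$. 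You already anticipate this in your parenthetical remark via the envelope/Danskin viewpoint, so nothing is lost.
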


\begin{proof}
Since $\theta$ is the infimum of a set of affine functions of the variable $\lambda$, it is concave.
Moreover, we have
\bean
\theta(\lambda) & = & \mathcal  L(\lb, \ch{\beta}_\lambda),
\eean
where, by Proposition \ref{propun}, $\ch{\beta}$ is the unique solution of (\ref{lasso}).
Using the filling property \cite[Chapter XII]{Hiriart:CAMA93}, we obtain that 
$\partial \theta(\lb)$ is the singleton $\{\|\ch{\beta}_{\lambda}\|_1\}$. 
Thus, $\theta$ is differentiable and its derivative at $\lambda$ is given by
\bean
\theta^\prime(\lambda) & = & \|\ch{\beta}_{\lambda}\|_1.
\eean
Moreover, this last expression shows that $\theta$ is nondecreasing.
\end{proof}

\subsubsection{Proof of Lemma \ref{cont}}  \label{lem-continuity}
\begin{itemize}
\item[(i)] {\it $\|\ch{\beta}_\lb\|_1$ is non-increasing} -- The fact that $\lambda \longmapsto \|\ch{\beta}_{\lambda}\|_1$ is non-increasing is an immediate consequence of the concavity of $\theta$.

\item[(ii)] {\it Boundedness} -- Notice that using (\ref{turtle}), we obtain that 
\bean
\| \ch{\beta}_{\lb}\|_{1} & \le & \max_{(S,\delta)\in\Sigma} \left\| (X_S^tX_{S})^{-1} (X_S^ty-\lb \delta )\right\|_{1}.
\eean
Thus, $\lambda  \longmapsto \ch{\beta}_{\lambda}$ is bounded on any interval of the form $(0,M]$, with 
$M\in (0,+\infty)$. Moreover, since its $\ell_{1}$-norm is non-increasing, it is bounded on $(0,\infty)$.
 \item[(iii)] {\it Continuity} --  
Assume for contradiction that $\lambda \longmapsto \ch{\beta}_{\lb}$
is not continuous at some $\lambda^{\circ}>0$. Using boundedness, we can construct two 
sequences converging towards 
$\ch{\beta}^{+}_{\lambda^{\circ}}$ and  $\ch{\beta}^{-}_{\lambda^{\circ}}$ respectively with 
$\ch{\beta}^{+}_{\lambda^{\circ}}\neq \ch{\beta}^{-}_{\lambda^{\circ}}$.
Since $\mathcal  L(\lb^{\circ},\cdot)$ is continuous, both limits are optimal solutions of the problem
\bea
\label{betlambsharp}
 \down{b\in \R^p}{\rm argmin} \ \mathcal  L(\lb^{\circ},b),
\eea
hence contradicting the uniqueness.
\end{itemize}

\subsubsection{Partitioning $(0,+\infty)$ into good intervals}

The continuity of $\lb \mapsto \ch{\beta}_\lb$ implies that the interval
$(0,+\infty)$ can be partitioned into subintervals of the type $I_k=(\lambda_k,\lambda_{k+1}]$,
with 
\begin{itemize}
\item[(i)]  $\lambda_0=0$ and $\lambda_{k}\in (0,+\infty]$ for $k> 0$,
\item[(ii)]  the support and sign pattern of $\ch{\beta}_{\lambda}$ are constant on each 
$\mathring{I}_k$. 
\end{itemize}
Notice further that due to {\it Step 1.a}, $\ch{T}_\lambda \neq \emptyset$ on at least $I_0$. Let
$\mathcal K$ be the nonempty set  
\bean
\mathcal  K & = & \left\{ k\in \N, \ \forall \lb \in \mathring{I}_k, \ \ch{\beta}_\lambda \neq 0 \right\}.
\eean
On any interval $I_k$, $k\in \mathcal K$, uniqueness of $\ch{\beta}$ implies that the expression (\ref{turtle}) 
for $\ch{\beta}_{\ch{T}_\lb}$ holds. Multiplying
(\ref{turtle}) on the left by $\sgn\left(\ch{\beta}_{\ch{T}_\lb}\right)^t$, we obtain
\bean
\|\ch{\beta}_\lambda\|_1  =  \sgn\left(\ch{\beta}_{\ch{T}_\lb}\right)^t(X_{\ch{T}_\lb}^tX_{\ch{T}_\lb})^{-1} X_T^ty  - \lambda \sgn\left(\ch{\beta}_{\ch{T}_\lb}\right)^t(X_{\ch{T}_\lb}^tX_{\ch{T}_\lb})^{-1} \sgn\left(\ch{\beta}_{\ch{T}_\lb}\right).
\eean
Thus
\bean
\frac{d \|\ch{\beta}_\lambda\|_1}{d\lambda}(\lambda)
& = &- \sgn\left(\ch{\beta}_{\ch{T}_\lb}\right)^t(X_{\ch{T}_\lb}^tX_{\ch{T}_\lb})^{-1} \sgn\left(\ch{\beta}_{\ch{T}_\lb}\right),
\eean
on $(0,+\infty)$. Thus, the definition of $\Sigma$, we obtain that
\bea
\label{slope}
\frac{d \|\ch{\beta}_\lambda\|_1}{d\lambda}(\lambda)
& \le & 
-\inf_{(S,\delta)\in \Sigma} \delta^t(X_{\ch{T}_\lb}^tX_{\ch{T}_\lb})^{-2} \delta \ <\ 0 
\eea
on each $\mathring{I}_k$, $k\in \mathcal K$ and
\bean
\frac{d \|\ch{\beta}_\lambda\|_1}{d\lambda}(\lambda) & = & 0
\eean
on each $\mathring{I}_k$, $k\not \in \mathcal K$, i.e. on each $\mathring{I}_k$ such that $\|\ch{\beta}_{\ch{T}_\lb}\|_1=0$
for all $\lb$ in $I_k$, if any such $I_k$ exists.
Since $\lambda \longmapsto \|\ch{\beta}_{\lambda}\|_1$ is continuous on $(0,\infty)$, (\ref{slope})
implies that:
\begin{itemize} 
\item[(i)]  there exists $\tau \in (0,+\infty)$, such that $\ch\beta_{\tau}=0$ (as an easy consequence of 
the Fundamental Theorem of Calculus and a contradiction).
\item[(ii)]  $\ch\beta_{\lb}=0$ for all $\lb\ge \tau$. 
\end{itemize}
Hence $\cup_{k\in \mathcal K}I_k$ is a connected bounded interval. 

\subsubsection{$|\ch{T}_{\lb}|=n$ for $\lb$ sufficiently small.} 
Let $(\lb_{k})_{k\in\N}$ be any positive sequence converging to $0$.
Let $\beta^{*}$ be any cluster point of the sequence $(\ch \beta_{\lb_{k}})_{k\in \N}$ 
(this sequence is easily seen to be bounded under various standard assumptions; 
see e.g. \cite[Lemma 3.5]{ChretienDarses:ArXiv11} for a proof). 
Fix $\e>0$ and $b\in\R^{p}$. For all $k\in\N$, we have
\bea
\mathcal  L (\lb_{k},\ch\beta_{\lb_{k}}) & \le & \mathcal  L(\lb_{k},b),
\eea
where $\mathcal L$ is defined by (\ref{calL}). 
Since $\mathcal  L(\lb_{k},\cdot)$ is continuous, we can also write for $k$ sufficiently large:
\bean
\mathcal  L (\lb_{k},\beta^{*}) &\le& \mathcal  L (\lb_{k},\ch\beta_{\lb_{k}}) + \e.
\eean
Hence,
$\mathcal  L (\lb_{k},\beta^{*}) \le \mathcal  L (\lb_{k},b) + \e$.
Letting $\lb_{k}\to 0$, we obtain
\bean
\frac12\|y-X \beta^{*}\|_2^2 &\le& \frac12\|y-X b\|_2^2 + \e,
\eean
and thus, 
\bea
\label{tempere}
\frac12\|y-X \beta^{*}\|_2^2 &\le& \inf_{b\in\R^{p}} \frac12\|y-X b\|_2^2.
\eea
Since ${\rm range} (X)=\R^{n}$, (\ref{tempere}) implies 
$\|y-X \beta^{*}\|_2^2 = 0$,
and then
\bea \label{limlb}
\lim_{\lb\downarrow 0}\|y-X \ch{\beta}_{\lambda}\|_2^2 & = & 0.
\eea
Notice further that $\{b\in\R^{p}, \ |{\rm supp}(b)|<n\}$ is a finite union of subspaces of 
$\R^{p}$, each with dimension $n-1$. Thus,
\bean
m:= \inf_{ \{b\in\R^{p}; \ |{\rm supp}(b)|<n\} } \frac12\|y-X b\|_2^2 &>& 0,
\eean
% \inf_{\stackrel{b\in\R^{p}}{|{\rm supp}(b)|<n}} \frac12\|y-X b\|_2^2
with probability one. Therefore for $\lb$ sufficiently small, (\ref{limlb}) implies
$\|y-X \ch{\beta}_{\lambda}\|_2^2  <  m$,
from which we deduce that $|\ch{T}_{\lb}|=n$ since one has $|\ch{T}_\lambda| \le  n$ (cf Reminder \ref{optlamb}).

\subsubsection{The map $\lb \mapsto\|y-X\ch{\beta}_\lb\|_2$ is increasing on $(0,\tau]$}

Using (\ref{turtle}), we obtain 
\bean
y-X \ch{\beta}_{\lambda}  & = & 
P_{V_{\ch{T}_\lb}^\perp}(y)
- \lb X_{\ch{T}_\lb} (X_{\ch{T}_\lb}^tX_{\ch{T}_\lb})^{-1} \sgn\left(\ch{\beta}_{\ch{T}_\lambda}\right),
\eean
which implies that 
\bean
\|y-X \ch{\beta}_{\lambda}\|_2^2 
& = & \left\| P_{V_{\ch{T}_\lb}^\perp}(y) \right\|_2^2 
-2 \lb \la P_{V_{\ch{T}_\lb}^\perp}(y),X_{\ch{T}_\lb} (X_{\ch{T}_\lb}^tX_{\ch{T}_\lb})^{-1} \sgn\left(\ch{\beta}_{\ch{T}_\lambda}\right) \ra \nonumber \\  
& & \hspace{2cm}+\lb^{2} \ \sgn\left(\ch{\beta}_{\ch{T}_\lambda}\right)^t 
(X_{\ch{T}_\lb}^tX_{\ch{T}_\lb})^{-1} \sgn\left(\ch{\beta}_{\ch{T}_\lambda}\right)
\eean
and thus, by the definition of $P_{V_{\ch{T}_\lb}^\perp}(y)$,
\bea
\label{denom}
\|y-X \ch{\beta}_{\lambda}\|_2^2 
& = & \left\| P_{V_{\ch{T}_\lb}^\perp}(y) \right\|_2^2 
+\lb^{2} \ \sgn\left(\ch{\beta}_{\ch{T}_\lambda}\right)^t 
(X_{\ch{T}_\lb}^tX_{\ch{T}_\lb})^{-1} \sgn\left(\ch{\beta}_{\ch{T}_\lambda}\right).
\eea

From (\ref{denom}), since $(X_{\ch{T}_\lb}^tX_{\ch{T}_\lb})^{-1}$ is definite, we obtain 
that $\lb \mapsto \|y-X\ch{\beta}_\lb\|_2$ is increasing on each $\mathring{I}_k$, and thus on $(0,\tau]$ by using that $\lb \mapsto \|y-X\ch{\beta}_\lb\|_2$ is continuous on $(0,\tau]$.

\subsection{Study of $\Gamma_A$} \label{sec-gammaA}

\begin{lemm} \label{gammaA}
$\Gamma_A$ is increasing on $(0,\tau]$ and $\lim_{\lb\rightarrow +\infty} \Gamma_A(\lb)=+\infty$.
\end{lemm}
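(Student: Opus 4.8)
The function $\Gamma_A$ is defined by
\[
\Gamma_A(\lambda) = \frac{n}{\log p}\ \frac{\lambda^2}{\|y-X_{\ch T_\lambda}\ch\beta_{\ch T_\lambda}\|_2^2},
\]
so its behavior is governed entirely by the numerator $\lambda^2$ and the denominator $\|y-X\ch\beta_\lambda\|_2^2$. The plan is to show that the numerator is (strictly) increasing while the denominator is non-increasing on $(0,\tau]$, whence the ratio is strictly increasing; and then to analyse the limit as $\lambda\to+\infty$, where $\ch\beta_\lambda=0$ and the denominator stabilizes to $\|y\|_2^2$.

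\emph{Monotonicity on $(0,\tau]$.} First I would simply invoke the result established just above, namely that $\lambda\mapsto\|y-X\ch\beta_\lambda\|_2$ is increasing on $(0,\tau]$ --- wait, this goes the wrong way for a ratio of the form $\lambda^2/(\text{increasing})$. So I must be more careful: I would instead use the explicit formula \eqref{denom},
\[
\|y-X\ch\beta_\lambda\|_2^2 = \bigl\|\gP_{V_{\ch T_\lambda}^\perp}(y)\bigr\|_2^2 + \lambda^2\,\sgn(\ch\beta_{\ch T_\lambda})^t(X_{\ch T_\lambda}^tX_{\ch T_\lambda})^{-1}\sgn(\ch\beta_{\ch T_\lambda}),
\]
and divide through to get, on each $\mathring I_k$ with $k\in\mathcal K$,
\[
\frac{1}{\Gamma_A(\lambda)} = \frac{\log p}{n}\left(\frac{\|\gP_{V_{\ch T_\lambda}^\perp}(y)\|_2^2}{\lambda^2} + \sgn(\ch\beta_{\ch T_\lambda})^t(X_{\ch T_\lambda}^tX_{\ch T_\lambda})^{-1}\sgn(\ch\beta_{\ch T_\lambda})\right).
\]
On each such interval the support and sign pattern of $\ch\beta_\lambda$ are constant, so the second term is a constant and the first term is strictly decreasing in $\lambda$; hence $1/\Gamma_A$ is strictly decreasing, i.e. $\Gamma_A$ is strictly increasing, on each $\mathring I_k$. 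Since $\Gamma_A$ is continuous on $(0,\tau]$ (as $\lambda\mapsto\ch\beta_\lambda$ is continuous by Lemma \ref{cont} and $\ch\beta_\lambda\neq0$ there, the denominator being bounded away from zero because $\|y-X\ch\beta_\lambda\|_2^2 \ge m > 0$ for $|{\rm supp}(\ch\beta_\lambda)|<n$ and tends to $0$ only as $\lambda\downarrow0$), the piecewise strict monotonicity glues into strict monotonicity on all of $(0,\tau]$.

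\emph{The limit at infinity.} For $\lambda\ge\tau$ we have $\ch\beta_\lambda=0$ (shown above), so the denominator equals $\|y\|_2^2$, a positive constant with probability one; therefore
\[
\Gamma_A(\lambda) = \frac{n}{\log p}\,\frac{\lambda^2}{\|y\|_2^2} \xrightarrow[\lambda\to+\infty]{} +\infty.
\]
Combining the two regimes: $\Gamma_A$ is strictly increasing on $(0,\tau]$, and on $[\tau,+\infty)$ it is $\lambda\mapsto\text{const}\cdot\lambda^2$, also strictly increasing, and the two pieces agree at $\tau$ by continuity; so $\Gamma_A$ is increasing throughout and diverges to $+\infty$.

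\emph{Main obstacle.} The one delicate point is the behavior across the breakpoints $\lambda_k$ separating the intervals $I_k$ and, in particular, at $\lambda=\tau$: I need that $\Gamma_A$ is genuinely continuous there (no jump) and that the denominator never vanishes on $(0,\tau]$. Both follow from continuity of $\lambda\mapsto\ch\beta_\lambda$ (Lemma \ref{cont}) together with the fact, proved in the preceding subsection, that $\|y-X\ch\beta_\lambda\|_2\to0$ only as $\lambda\downarrow0$ while $\|y-X\ch\beta_\lambda\|_2^2\ge m>0$ as soon as the support has fewer than $n$ elements; so on any compact subinterval of $(0,\tau]$ the denominator is bounded below by a positive constant, legitimizing the division and the gluing argument.
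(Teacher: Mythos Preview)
Your proposal is correct and follows essentially the same route as the paper: rewrite $\Gamma_A$ (or equivalently $1/\Gamma_A$) using the identity \eqref{denom}, observe that on each open interval $\mathring I_k$ the support-dependent term is constant while $\lambda^{-2}\|\gP_{V_{\ch T_\lambda}^\perp}(y)\|_2^2$ is decreasing, glue by continuity, and then note that for $\lambda\ge\tau$ the residual is $\|y\|_2^2$ so $\Gamma_A(\lambda)\propto\lambda^2\to+\infty$. One small caveat: on the initial interval where $|\ch T_\lambda|=n$ (shown to occur for $\lambda$ small) one has $\gP_{V_{\ch T_\lambda}^\perp}=0$, so the first term vanishes identically and your ``strictly decreasing'' becomes merely constant there; this does not affect the lemma as stated, but be aware of it if you later invoke strict monotonicity for uniqueness.
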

\
\begin{proof}
Due to Step 3, and the definition of $\tau$, the set of values $\lb>0$ such that 
$\|y-X\ch{\beta}_\lb\|_2>0$ is nonempty. Let $\lb_{\rm inf}$ denote its infimum value.  
Take $\lb \in \mathring{I}_k$ for some $k$ such that $\lb\ge \lb_{\rm inf}$. In particular, 
$\lb \neq 0$. Then, 
\bea
\Gamma_A(\lb) & = & \frac{s}{\frac1{\lb^2} \left\| P_{V_{\ch{T}_\lb}^\perp}(y) \right\|_2^2 
+\sgn\left(\ch{\beta}_{\ch{T}_\lambda}\right)^t 
(X_{\ch{T}_\lb}^tX_{\ch{T}_\lb})^{-1} \sgn\left(\ch{\beta}_{\ch{T}_\lambda}\right)},
\eea 
and we deduce that $\Gamma_A$ is increasing on $\mathring{I}_k$. By continuity, we have that 
$\Gamma_A$ is increasing on $(\lb_{\rm inf},\tau]$. 
Once $\lb>\tau$, $\|y-X\ch{\beta}_\lb\|_2^2=\|y\|_2^2$ and $\Gamma_A(\lb)=s \lb^2/\|y\|_2^2$. Thus, 
$\lim_{\lb\rightarrow +\infty} \Gamma_A(\lb)=+\infty$ as desired. 
\end{proof}
\

The fact that $\Gamma_A$ is increasing 
proves that the equation $\Gamma_A(\lb)=C_{\rm var}$ admits at most one solution.

\subsection{Study of $\Gamma_B$}

Recall that 
\bea
\label{gamma}
\Gamma_B(\lb) & = & 
\frac{\lambda\|\ch{\beta}_{\lambda}\|_1}{\|y-X \ch{\beta}_{\lambda}\|_2^2}.
\eea
We will use repeatedly that $\ch{\beta}_\lb$ is unique for all $\lb>0$ and 
that the trajectory $\lb \mapsto \ch{\beta}_\lb$ is continuous under the Generic Condition, see \cite{Dossal:HAL11}.\\

\begin{lem} \label{gammafunc}
Under the Generic Position Assumption of \cite{Dossal:HAL11}, the function $\Gamma_B$ defined by (\ref{gamma}) almost surely satisfies
\bea
\lim_{\lambda\downarrow 0} \ \Gamma_B(\lambda) & = & +\infty.
\eea
Moreover, almost surely, there exists $\tau>0$ such that $\Gamma_B$ is decreasing
on the interval $(0,\tau]$ with $\Gamma_B(\tau)=0$, while $\|y-X\ch{\beta}_\lb\|_2$ is increasing on $(0,\tau]$.\\
\end{lem}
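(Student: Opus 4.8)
The plan is to treat the four assertions separately — $\Gamma_B(\tau)=0$, $\lim_{\lb\downarrow0}\Gamma_B(\lb)=+\infty$, monotonicity of $\Gamma_B$, and monotonicity of $\lb\mapsto\|y-X\ch\beta_\lb\|_2$ — leaning on the structural facts about the LASSO path already proved in this appendix. Three of them are cheap. I take $\tau=\sup\{\lb>0:\ \ch\beta_\lb\neq0\}$; by the earlier steps ($\ch\beta_\lb=0$ for $\lb$ large, $\lim_{\lb\downarrow0}\|y-X\ch\beta_\lb\|_2=0$, $|\ch T_\lb|=n$ for $\lb$ small) this $\tau$ is finite and positive a.s., and by continuity $\ch\beta_\tau=0$, so $\|\ch\beta_\tau\|_1=0$ while $\|y-X\ch\beta_\tau\|_2^2=\|y\|_2^2>0$ a.s., hence $\Gamma_B(\tau)=0$. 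For the limit at $0$: when $\lb$ is small, $V_{\ch T_\lb}=\R^n$, so (\ref{denom}) gives $\|y-X\ch\beta_\lb\|_2^2=\lb^2\,\sgn(\ch\beta_{\ch T_\lb})^t(X_{\ch T_\lb}^tX_{\ch T_\lb})^{-1}\sgn(\ch\beta_{\ch T_\lb})\le C_0\lb^2$, with $C_0$ the maximum of $\delta^t(X_S^tX_S)^{-1}\delta$ over the finite set $\Sigma$; meanwhile $\|\ch\beta_\lb\|_1$ is non-increasing (Lemma \ref{cont}) and bounded below by a positive constant near $0$ (Lemma \ref{triv}, or because $\ch\beta_\lb$ converges to a nonzero solution of $Xb=y$), so $\Gamma_B(\lb)\ge c_0/(C_0\lb)\to+\infty$. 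Monotonicity of $\lb\mapsto\|y-X\ch\beta_\lb\|_2$ on $(0,\tau]$ is obtained by the same argument as in the earlier subsubsection devoted to it: on each $\mathring{I}_k$ it follows from (\ref{denom}) and positive-definiteness of $(X_{\ch T_\lb}^tX_{\ch T_\lb})^{-1}$, and continuity glues the pieces.

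For the genuine content — that $\Gamma_B$ is decreasing on $(0,\tau]$ — I work piecewise. Partition $(0,\tau]$ into the intervals $I_k=(\lb_k,\lb_{k+1}]$ on which the support $S$ and sign pattern $\eta$ of $\ch\beta_\lb$ are constant. On $\mathring{I}_k$, (\ref{turtle}) gives $\|\ch\beta_\lb\|_1=a-b\lb$ and (\ref{denom}) gives $\|y-X\ch\beta_\lb\|_2^2=c+b\lb^2$, where $a=\eta^t(X_S^tX_S)^{-1}X_S^ty>0$, $b=\eta^t(X_S^tX_S)^{-1}\eta>0$, $c=\|\gP_{V_S^\perp}y\|_2^2\ge0$. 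Differentiating on $\mathring{I}_k$,
\[
\Gamma_B'(\lb)=\frac{ac-ab\lb^2-2bc\lb}{(c+b\lb^2)^2},
\]
so $\Gamma_B$ is strictly decreasing on $\mathring{I}_k$ exactly when $Q_k(\lb):=ab\lb^2+2bc\lb-ac>0$ there. When $c=0$ — the case on the interval $I_0$ adjacent to $0$, where $|S|=n$ — this is just $ab\lb^2>0$. When $c>0$, $Q_k$ is an upward parabola with $Q_k(0)=-ac<0$, hence with exactly one positive root $\lb_k^\star$; therefore it suffices to check $Q_k(\lb_k)\ge0$ at the left endpoint, for then $\lb_k\ge\lb_k^\star$ and $Q_k>0$ on all of $\mathring{I}_k$. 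Continuity of $\Gamma_B$ then upgrades strict monotonicity on each $\mathring{I}_k$ to monotonicity on $(0,\tau]$.

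The hard part will be this last inequality $Q_k(\lb_k)\ge0$ at each breakpoint (a convenient sufficient condition being $\|\gP_{V_S^\perp}y\|_2^2\le b\,\lb_k^2$), which I would prove by induction up the path, starting from $I_0$ where $c=0$. The ingredients: at a breakpoint $\lb_k$ the vector $\ch\beta_{\lb_k}$ satisfies the oracle formula (\ref{turtle}) for the supports of \emph{both} adjacent intervals; the subgradient conditions (\ref{cond1})–(\ref{cond2}) force the coordinate $i$ changing activity at $\lb_k$ to satisfy $|X_i^t(y-X\ch\beta_{\lb_k})|=\lb_k$; the Generic Condition yields $|\la X_i,X_S(X_S^tX_S)^{-1}\eta\ra|<1$; and the rank-one update identity
\[
(\eta,\eta_i)^t(X_{S\cup\{i\}}^tX_{S\cup\{i\}})^{-1}(\eta,\eta_i)=\eta^t(X_S^tX_S)^{-1}\eta+\frac{\big(\la X_i,X_S(X_S^tX_S)^{-1}\eta\ra-\eta_i\big)^2}{\|\gP_{V_S^\perp}X_i\|_2^2}
\]
relates the parameters $(a,b,c)$ of consecutive intervals, so that the bound on $\|\gP_{V_S^\perp}y\|_2^2$ can be propagated from one interval to the next. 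The delicate points are making the induction actually close (the naive invariant $c\le b\,\lb_k^2$ must be strengthened to absorb the fill-in terms) and handling the breakpoints at which the active set grows rather than shrinks; this is precisely where the Generic Condition gets used, and where I expect the bulk of the work to lie.
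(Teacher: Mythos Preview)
For the limit $\Gamma_B(\lb)\to+\infty$ as $\lb\downarrow0$, for the existence of $\tau$ with $\Gamma_B(\tau)=0$, and for the monotonicity of $\lb\mapsto\|y-X\ch\beta_\lb\|_2$, your argument is essentially the paper's.

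The divergence is in the monotonicity of $\Gamma_B$. The paper does \emph{not} carry out your piecewise sign analysis of $Q_k$; instead it sets $\Phi(\lb)=\lb\|\ch\beta_\lb\|_1$, applies the quotient rule, and asserts on each $\mathring I_k$ the simplification
\[
\Gamma_B'(\lb)\;=\;\frac{\Phi'(\lb)\,D(\lb)-\Phi(\lb)\,D'(\lb)}{D(\lb)^2}\;=\;\frac{\Phi'(\lb)-2\Phi(\lb)/\lb}{D(\lb)}
\;=\;\frac{-\|\ch\beta_{\ch T_\lb}\|_1-\lb\,\|(X_{\ch T_\lb}^tX_{\ch T_\lb})^{-1/2}\sgn(\ch\beta_{\ch T_\lb})\|_2^2}{D(\lb)}\,,
\]
with $D(\lb)=\|y-X\ch\beta_\lb\|_2^2$, which is manifestly negative; no induction along the path is invoked.

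Your explicit computation $\Gamma_B'=(ac-ab\lb^2-2bc\lb)/(c+b\lb^2)^2$ is correct, and it is worth comparing the two. The paper's middle equality is equivalent to $D'(\lb)=2D(\lb)/\lb$, i.e.\ $2b\lb=2(c+b\lb^2)/\lb$, which holds only when $c=\|\gP_{V_{\ch T_\lb}^\perp}y\|_2^2=0$, that is, on the first interval $I_0$. On pieces with $c>0$ the paper's last expression is in fact a \emph{lower} bound for $\Gamma_B'$ (the discrepancy is $2c(a-b\lb)/(c+b\lb^2)^2>0$), so its one-line argument does not settle the sign there. Your instinct that ``more work is needed'' on those pieces is therefore well placed. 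That said, your own remedy is not yet a proof: the invariant $c_k\le b_k\lb_k^2$ does not obviously propagate across a breakpoint, since continuity of $D$ gives $c_{k+1}-c_k=(b_k-b_{k+1})\lb_{k+1}^2$, and hence $c_{k+1}\le b_{k+1}\lb_{k+1}^2$ would need $c_k\le(2b_{k+1}-b_k)\lb_{k+1}^2$, which is strictly stronger than what you carry. You will need either a sharper invariant tied to the rank-one updates, or a different global argument, to close this step.
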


\begin{proof}
Let us first show that $\lim_{\lambda\downarrow 0}\Gamma_B(\lambda)= +\infty$.

Let $\lb_{0}>0$ be sufficiently small so that for all $\lb\le \lb_{0}$, $|\ch{T}_{\lb}|=n$. Such 
a $\lb_0$ exists due to {\it Step 1.a}. 
%Using Lemma \ref{uniq}, 
Hence, since $X_{\ch{T}_{\lb}}$ is nonsingular:
\bea
\gP_{V_{T_{\lb}}}  & = & \Id_n.
\eea
Thus, using (\ref{turtle}), we obtain 
\bea
y-X \ch{\beta}_{\lambda}  & = & - \lb X_{\ch{T}_\lb} (X_{\ch{T}_\lb}^tX_{\ch{T}_\lb})^{-1} \sgn\left(\ch{\beta}_{\ch{T}_\lambda}\right),
\eea
which implies that 
\bean
\|y-X \ch{\beta}_{\lambda}\|_2^2 & = &
\lb^{2} \|(X_{\ch{T}_\lb}^tX_{\ch{T}_\lb})^{-1} \sgn\left(\ch{\beta}_{\ch{T}_\lambda}\right)\|_{2}^{2}.
\eean
Moreover, Lemma \ref{triv} combined with (\ref{turtle}) gives
\bean
\|\ch{\beta}_{\lambda}\|_1 & > \inf_{(S,\delta)\in\Sigma} \left\|(X_S^tX_S)^{-1} (X_S^ty-\lambda \delta)\right\|_{1} := m'  
> & 0.
\eean
Hence, for $\lb\le\lb_{0}$,
\bean
\Gamma_B(\lb) & \ge & \frac{\lb m'}{\lb^{2} \|X_{\ch{T}_\lb} (X_{\ch{T}_\lb}^tX_{\ch{T}_\lb})^{-1} \sgn\left(\ch{\beta}_{\ch{T}_\lambda}\right)\|_{2}^{2}}.
\eean
Using the trivial fact that
$\sup_{(S,\delta)\in\Sigma}\|X_{S}(X_S^tX_S)^{-1} \delta\|_{2}^{2}  <  \infty$,
the proof of Step 1 is complete.\\

Let us now show that  $\Gamma_B$ is decreasing on $(0,\tau)$ by studying the function
\bef{\Phi}
(0,+\infty) & \longrightarrow & \R_{+} \\
\lambda & \longmapsto & \lambda \|\ch{\beta}_\lambda\|_1.
\eef
We immediately deduce from Step 2 and the definition of the intervals $I_k$, 
$k\in \mathcal K$, that $\Phi$ is differentiable on each $\mathring{I}_k$, 
$k \in \mathcal K$. Using (\ref{turtle}), its derivative on $\mathring{I}_k$ reads 
\bean
\frac{d \Phi}{d\lambda}(\lambda) & = &
\|\ch{\beta}_{\ch{T}_\lb}\|_1 -
\lambda \ \sgn\left(\ch{\beta}_{\ch{T}_\lb}\right)^t(X_{\ch{T}_\lb}^tX_{\ch{T}_\lb})^{-1} \sgn\left(\ch{\beta}_{\ch{T}_\lb}\right) \nonumber\\
& = & \label{prtt}  \|\ch{\beta}_{\ch{T}_\lb}\|_1 - 
\lambda \ \|(X_{\ch{T}_\lb}^tX_{\ch{T}_\lb})^{-1/2} \sgn\left(\ch{\beta}_{\ch{T}_\lb}\right)\|_2^2.
\eean
Now, since $X_{\ch{T}_\lb}$ is non singular, 
\bean 
\|y-X\ch{\beta}_\lb\|_2^2  =  \lambda^2 \|(X_{\ch{T}_\lb}^tX_{\ch{T}_\lb})^{-1} \sgn\left(\ch{\beta}_{\ch{T}_\lambda}\right)\|_{2}^{2}  >  \lambda^2 n \ \sigma_{\min}\left((X_{\ch{T}_\lb}^tX_{\ch{T}_\lb})^{-1}\right)^2 >  0
\eean 
for $\lb>0$. Therefore $\Gamma_B(\lambda)<+\infty$ on $(0,+\infty)$, $\Gamma_B$ is continuous on $I_k$ and differentiable 
on $\mathring{I}_k$. Moreover, using (\ref{denom}), we have
\bean
\frac{d \Gamma_B}{d\lambda}(\lambda) =  \frac{\frac{d \Phi}{d\lambda}(\lambda)\|y-X\ch{\beta}_\lb\|_2^2-\Phi(\lambda) \frac{d \|y-X\ch{\beta}_\lb\|_2^2}{d\lambda}(\lambda)}{\|y-X\ch{\beta}_\lb\|_2^4} = \frac{\frac{d \Phi}{d\lambda}(\lambda)- 2 \frac{\Phi(\lambda)}{\lb}}{\|y-X\ch{\beta}_\lb\|_2^2}.
\eean
Hence, using (\ref{prtt}) and (\ref{denom}),
\bean
\frac{d \Gamma_B}{d\lambda}(\lambda)
& = & \frac{-\|\ch{\beta}_{\ch{T}_\lb}\|_1 - 
\lambda \ \|(X_{\ch{T}_\lb}^tX_{\ch{T}_\lb})^{-1/2} \sgn\left(\ch{\beta}_{\ch{T}_\lb}\right)\|_2^2 
}
{\|y-X\ch{\beta}_\lb\|_2^2} \\
& \le & \frac{- \lambda \ \|(X_{\ch{T}_\lb}^tX_{\ch{T}_\lb})^{-1/2} \sgn\left(\ch{\beta}_{\ch{T}_\lb}\right)\|_2^2 
}
{\lb^{2} \|(X_{\ch{T}_\lb}^tX_{\ch{T}_\lb})^{-1} \sgn\left(\ch{\beta}_{\ch{T}_\lambda}\right)\|_{2}^{2}}
 \le  -\frac1{\lb} \left(\frac{\sigma_{\min}\left((X_{\ch{T}_\lb}^tX_{\ch{T}_\lb})^{-1/2}\right)}
{\sigma_{\max}\left((X_{\ch{T}_\lb}^tX_{\ch{T}_\lb})^{-1}\right)}\right)^2 ,
\eean 
on each $\mathring{I}_k$.  We can thus conclude, due to the non-singularity of $X_{\ch{T}_\lb}$, that 
$\Gamma_B$ is decreasing on $(0,\tau)$, as announced.

\end{proof}

\section*{Acknowledgment} The authors are very grateful to the referees and the editor for their thorough reading and helpful comments that yield to 
substantial clarifications and simplifications of the presentation and the arguments.

\bibliographystyle{amsplain}
\bibliography{database}

\end{document}